\definecolor{couleur_cite}{rgb}{0.05,.4,0.05}
\definecolor{couleur_link}{rgb}{0.05,0.05,0.4}
\newtheorem{theorem}{Theorem}[section]
\newtheorem{lemma}[theorem]{Lemma}
\newtheorem{prop}[theorem]{Proposition}
\newtheorem{cor}[theorem]{Corollary}
\newtheorem{definition}[theorem]{Definition}
\theoremstyle{remark}
\newtheorem*{remark}{Remark}
\newcommand{\R}{\mathbb R}
\newcommand{\C}{\mathbb C}
\newcommand{\N}{\mathbb N}
\newcommand{\Z}{\mathbb Z}
\newcommand{\Q}{\mathbb Q}
\newcommand{\A}{\mathbb A}
\newcommand{\bbP}{\mathbb P}
\newcommand{\bbH}{\mathbb H}
\newcommand{\p}{\mathfrak p}
\newcommand{\g}{\mathfrak g}
\newcommand{\gq}{\mathfrak q}
\newcommand{\gk}{\mathfrak k}
\newcommand{\ga}{\mathfrak a}
\newcommand{\gu}{\mathfrak u}
\newcommand{\gh}{\mathfrak h}
\newcommand{\gt}{\mathfrak t}
\newcommand{\fre}{\mathfrak e}
\newcommand{\gsl}{\mathfrak{sl} }
\newcommand{\gsu}{\mathfrak{su} }
\newcommand{\gso}{\mathfrak{so} }
\newcommand{\gG}{\mathfrak G}
\newcommand{\gH}{\mathfrak H}
\newcommand{\gU}{\mathfrak U}
\newcommand{\gT}{\mathfrak T}
\newcommand{\gX}{\mathfrak X}
\newcommand{\gY}{\mathfrak Y}
\newcommand{\cO}{\mathcal O}
\newcommand{\cL}{\mathcal L}
\newcommand{\cS}{\mathcal S}
\newcommand{\cT}{\mathcal T}
\newcommand{\cM}{\mathcal M}
\newcommand{\cP}{\mathcal P}
\newcommand{\cQ}{\mathcal Q}
\newcommand{\cH}{\mathcal H}
\newcommand{\bG}{{\underline{G} }}
\newcommand{\bH}{{\underline{H} }}
\newcommand{\bT}{{\underline{T} }}
\newcommand{\bL}{{\underline{L} }}
\newcommand{\bU}{{\underline{U} }}
\newcommand{\bB}{{\underline{B} }}
\newcommand{\bX}{{\underline{X} }}
\newcommand{\bV}{{\underline{V} }}
\newcommand{\bY}{{\underline{Y} }}
\newcommand{\bI}{\underline{I} }
\newcommand{\hG}{\widehat{G}}
\newcommand{\hT}{\widehat{T}}
\newcommand{\tK}{\widetilde{K}}
\newcommand{\tO}{\widetilde{\cO}}
\newcommand{\tk}{\widetilde{k}}
\newcommand{\be}{\begin{equation}}
\newcommand{\ee}{\end{equation}}
\newcommand{\bes}{\begin{equation*}}
\newcommand{\ees}{\end{equation*}}
\newcommand{\ba}{\begin{eqnarray}}
\newcommand{\ea}{\end{eqnarray}}
\newcommand{\bas}{\begin{eqnarray*}}
\newcommand{\eas}{\end{eqnarray*}}
\title{Upper bounds for Maass forms on semisimple groups}
\author{Simon Marshall}
\address{Department of Mathematics\\
University of Wisconsin Madison\\
480 Lincoln Drive\\
Madison\\
WI 53703, USA}
\email{marshall@math.wisc.edu}
\thanks{Supported by NSF grant DMS-1501230.}
\begin{document}

\begin{abstract}
We prove a power saving over the local bound for the $L^\infty$ norm of Hecke-Maass forms on any quasi-split semisimple real group that is not isogenous to a product of odd special unitary groups
\end{abstract}

\maketitle
\setcounter{tocdepth}{1}
\tableofcontents

\section{Introduction}
\label{sec1}

\subsection{Bounds for arithmetic eigenfunctions}

Let $M$ be a compact Riemannian manifold of dimension $n$, and $\psi$ a function on $M$ satisfying $(\Delta + \lambda^2) \psi = 0$ and $\| \psi \|_2 = 1$.  A classical theorem of Avacumovi\'c \cite{Av} and Levitan \cite{Le} states that

\be
\label{avac}
\| \psi \|_\infty \ll \lambda^{(n-1)/2},
\ee
that is, the pointwise norm of $\psi$ is bounded in terms of its Laplace eigenvalue.  This bound is sharp on the round sphere $S^n$ or a surface of revolution, but is far from the truth on flat tori.  It is an interesting problem in semiclassical analysis to find conditions on $M$ under which (\ref{avac}) can be strengthened, and such conditions often take the form of a non-recurrence assumption for the geodesic flow on $M$.  One result of this kind is due to B\'erard \cite{Be}, who proves that if $M$ has negative sectional curvature (or has no conjugate points if $n=2$) then we have

\be
\label{Berard}
\| \psi \|_\infty \ll \frac{\lambda^{(n-1)/2}}{\sqrt{\log \lambda}}.
\ee
See also \cite{SZ,TZ} for other theorems bounding $\| \psi \|_\infty$ under assumptions on the geodesic flow of $M$.  The problem of strengthening (\ref{avac}) for negatively curved $M$ is an interesting one, because for generic $M$ we expect that $\| \psi \|_\infty \ll_\epsilon \lambda^\epsilon$, whereas the strongest upper bound that is known in general is (\ref{Berard}).

In \cite{IS}, Iwaniec and Sarnak introduced a different condition on $M$ and $\psi$ which allows them to deduce quite a strong bound for $\| \psi \|_\infty$.  They assume that $M$ is a congruence hyperbolic manifold, in particular the quotient of $\bbH^2$ by the group of units in an order in a quaternion division algebra over $\Q$, and that $\psi$ is an eigenfunction of the Hecke operators on $M$.  They then prove that $\| \psi \|_\infty \ll_\epsilon \lambda^{5/12 + \epsilon}$.  Moreover, one expects that the assumption on $\psi$ is not necessary because the spectral multiplicities of negatively curved manifolds are always observed to be bounded.  This bound is the strongest that is known for the supremum norm of an eigenfunction on a negatively curved surface, with the next strongest being (\ref{Berard}).

We are interested in extending the methods of Iwaniec and Sarnak to higher dimensional mainfolds, which requires considering eigenfunctions on general locally symmetric spaces.  We shall only consider spaces of noncompact type, although the method of proof would apply equally well to spaces of compact type.  We make this restriction partly for convenience, and partly because the multiplicities of the Laplace spectrum on such manifolds are expected to be bounded as in the hyperbolic case.  Although these manifolds have zero sectional curvature in certain directions, their eigenfunctions are expected to exhibit essentially the same chaotic behaviour that is observed on negatively curved manifolds.

We recall that locally symmetric spaces of noncompact type are constructed by taking a noncompact semisimple real Lie group $G$, a maximal compact subgroup $K \subset G$, and a lattice $\Gamma \subset G$, and defining $Y = \Gamma \backslash G / K$.  We do not assume that $Y$ is compact.  We let $n$ and $r$ be the dimension and rank of $Y$.  We consider functions $\psi \in L^2(Y)$ that are eigenfunctions of the full ring of invariant differential operators, which is isomorphic to a finitely generated polynomial ring in $r$ variables.  This ring contains $\Delta$, and we continue to define $\lambda$ by $(\Delta + \lambda^2) \psi = 0$.

If $\Omega \subset Y$ is compact, Sarnak proves in \cite{Sa} that $\psi$ satisfies

\be
\label{Sarnak}
\| \psi|_\Omega \|_\infty \ll \lambda^{(n-r)/2}.
\ee
The analogous problem to the one solved by Iwaniec and Sarnak for $\bbH^2$ is to improve the exponent in this bound, under the assumptions that $\Gamma$ is congruence arithmetic, and that $\psi$ is an eigenfunction of the ring of Hecke operators.  (Note that when $r \ge 2$, $\Gamma$ is automatically arithmetic by a theorem of Margulis.)  This is often referred to as the problem of giving a subconvex, or sub-local, bound for the sup norm of a Maass form in the eigenvalue aspect.  Besides the original work of Sarnak and Iwaniec, the pairs $\Gamma \subset G$ for which it has previously been solved are $SL_2(\cO_F) \subset SL_2(F_\infty)$ for any number field $F$ by Blomer, Harcos, Maga, and Mili\'cevi\'c \cite{BHMM,BHM}, $Sp_4(\Z) \subset Sp_4(\R)$ by Bomer and Pohl \cite{BP}, $SL_3(\Z) \subset PGL_3(\R)$ by Holowinsky, Ricotta, and Royer \cite{HRR}, and $SL_n(\Z) \subset PGL_n(\R)$ for any $n$ by Blomer and M\'aga \cite{BM1,BM2}.  There are also results bounding eigenfunctions on the round spheres $S^2$ and $S^3$ equipped with Hecke algebras \cite{BM3,BM4}.

We note that much work has been done on variants of the sup-norm problem.  One may consider Maass forms of varying level and eigenvalue as in \cite{BH, HT1, HT2, Te1, Te2}.  There are also results bounding the $L^2$ norm of the restriction of $\psi$ to a submanifold of positive dimension \cite{Ma2, Ma3}.

\subsection{Statement of results}

We first state our result in a simple case.

\begin{theorem}
\label{simplemain}

Let $F$ be a totally real number field, and let $v_0$ be a real place of $F$.  Let $\bG/F$ be connected and semisimple.  We make the following assumptions on $\bG$.

\begin{itemize}

\item $\bG_v$ is compact for all real $v \neq v_0$

\item $\bG_{v_0}$ is $\R$-almost simple\footnote{We recall that a real group is $\R$-almost simple if it does not have a nontrivial proper connected normal subgroup.}, quasi-split, and not isogenous to $SU(n,n-1)$ for any $n$.

\end{itemize}
\noindent
Let $Y$ be a congruence manifold associated to $\bG$ as in Section \ref{sec2adelic}, and let $\Omega \subset Y$ be compact.  Let $\psi$ be a Hecke-Maass form on $Y$ satisfying $\| \psi \|_2 = 1$ and $(\Delta + \lambda^2) \psi = 0$.  We then have $\| \psi|_\Omega \|_\infty \ll \lambda^{(n-r)/2 - \delta}$.

\end{theorem}

We note that the implied constant here, and in Theorem \ref{main} below, is ineffective, though this could probably be fixed with additional work.  This is due to the application of some ineffective bounds on the complexity of algebraic sets, described in Section \ref{sec:complexity}, when proving our main bound on Hecke returns in Section \ref{Ksmallest}.

We shall deduce Theorem \ref{simplemain} from the following more general result.  To state it, it will be convenient to make two definitions.  The first is a condition on a real semisimple group $G$:

\medskip

$(\mathsf{WS})$:  $G$ is quasi-split, and not isogenous to a product of odd special unitary groups.

\medskip
\noindent
The second condition will be applied to the spectral parameters of our Maass form, to simplify the application of a theorem of Blomer-Pohl \cite[Thm. 2]{BP} and Matz-Templier \cite[Prop. 7.2]{MT} in the proof.

\begin{definition}

Let $\g$ be a real semisimple Lie algebra with Cartan decomposition $\g = \gk + \p$ and maximal abelian subspace $\ga \subset \p$.  Let $\g_i$ be the $\R$-simple factors of $\g$.  We say that $\lambda \in \ga^*_\C$ is $(A,\sigma)$-balanced for $A, \sigma > 0$ if its projections $\lambda_i$ to $\g_{i,\C}^*$ satisfy $\| \lambda_i \| \le A \| \lambda_j \|^\sigma$.

\end{definition}

We may now state the general form of our main theorem.

\begin{theorem}
\label{main}

Let $F$ be a number field, and let $v_0$ be a real place of $F$.  Let $\bG/F$ be connected and semisimple, and let $Y$ be a congruence manifold associated to $\bG$ as in Section \ref{sec2adelic}.  Let $\psi$ be a Hecke-Maass form on $Y$ satisfying $\| \psi \|_2 = 1$, with spectral parameter $\lambda \in \ga^*_\C$.  We make the following assumptions:

\begin{itemize}

\item $\bG_{v_0}$ satisfies $(\mathsf{WS})$.

\item The component of $\lambda$ at $v_0$, denoted $\lambda_0$, is $(A,\sigma)$-balanced in $\textup{Lie}(\bG_{v_0})$.

\end{itemize}
\noindent
Then if $\Omega \subset Y$ is compact, there exists $\delta = \delta(\bG,\sigma)$ and $C = C(\Omega,A,\sigma)$ such that

\be
\label{sublocal}
\| \psi|_{\Omega} \|_\infty \le C D(\lambda)^{1/2} (1 + \| \lambda_0 \|)^{-\delta},
\ee
where $D(\lambda)$ is defined in (\ref{Dlambda}).

\end{theorem}

As Theorem \ref{main} is rather general, we now give some examples of what one my prove by specializing it in various ways.  First, Theorem \ref{main} solves the sup norm problem for split groups over any number field $F$, subject to the balance condition on the spectral parameter.

\begin{cor}
\label{mainsplit}

Let $\bG/F$ be split.  Let $\psi$, $\lambda$, and $\Omega$ be as in Theorem \ref{main}.  Assume that $\lambda$ is $(A,\sigma)$-balanced in $\text{Lie}(\bG_\infty)$.  Then there exists $\delta = \delta(\bG,\sigma)$ and $C = C(\Omega,A,\sigma)$ such that

\bes
\| \psi|_{\Omega} \|_\infty \le C D(\lambda)^{1/2} (1 + \| \lambda \|)^{- \delta}.
\ees

\end{cor}

\begin{proof}

If $F$ has a real place, the corollary follows directly from Theorem \ref{main}.  If $F$ has only complex places, the $\Q$-group $\text{Res}_{F/\Q} \bG$ satisfies $(\mathsf{WS})$ at infinity so we may apply Theorem \ref{main} to it.

\end{proof}

As a second example, we may apply Theorem \ref{simplemain} to groups with $\bG(F_{v_0}) = SL(2,\C)$ so that the associated symmetric spaces are congruence arithmetic hyperbolic 3-manifolds.

\begin{cor}
\label{H3}

Let $Y$ be a compact congruence arithmetic hyperbolic 3-manifold.  If the invariant trace field $F$ of $Y$ has a subfield of index 2, then any Hecke-Laplace eigenfunction $\psi$ on $Y$ that satisfies $(\Delta + \lambda^2)\psi = 0$ and $\| \psi \|_2 = 1$ also satisfies $\| \psi \|_\infty \ll (1 + \lambda)^{1 - \delta}$ for some $\delta > 0$ depending only on $Y$.

\end{cor}

\begin{proof}

We recall the basic properties of the invariant trace field $F$ and quaternion algebra $D/F$ associated to $Y$, see \cite{MR} for details.  There is exactly one infinite place $w_0$ of $F$ that is complex, and $D$ is ramified at all real places of $F$.  In addition, $Y$ is an arithmetic manifold associated to the algebraic group $D^\times / F^\times$.  Let $L$ be the index 2 subfield of $F$, and let $v_0$ be the place of $L$ below $w_0$, which must be real by the uniqueness of $w_0$.  If we define $\bG = \text{Res}_{F/L} (D^\times / F^\times)$, then $Y$ is an arithmetic manifold associated to $\bG$, and $\bG$ satisfies ($\mathsf{WS}$) at $v_0$ so that we may apply Theorem \ref{main}.

\end{proof}

\subsection{Structure of the paper}

We now give a plan of the proof, and describe some of its new features.  The core of the amplification argument is in section \ref{sec3}.  The first step is to construct a function $k$ to insert into the pre-trace formula, which has two components:

\setlength{\leftmargini}{1em}
\begin{itemize}

\item {\bf Constructing a spectral projector ($\S$\ref{sec32}):}  We avoid assuming that $\psi$ is tempered at infinity by using the method developed in \cite[Section 7]{BM}.

\item {\bf Constructing an amplifier ($\S$\ref{sec7}):}  Our construction of an amplifier at finite places works on any split semisimple group.

\end{itemize}

After applying a theorem of Blomer-Pohl \cite{BP} and Matz-Templier \cite{MT} to prove that the spectral projector decays away from the maximal compact $K_\infty \subset \bG_\infty$, it remains to bound the number of Hecke returns in our amplifier.  The first step in doing this is to show that there is a number field $E/F$ that embeds into $F_{v_0}$, and a subgroup $\bH < \bG$ defined over $E$, such that $\bH(F_{v_0})$ is a maximal compact connected subgroup of $\bG(F_{v_0})$, which we do in section \ref{sec21}.  We next make three main steps, of which the first is:

\begin{itemize}

\item {\bf Diophantine approximation ($\S$\ref{sec5}):} We show that all $\gamma \in \bG(F) \cap \text{supp}(k)$ that map $x$ close to itself lie in a $F$-subgroup $\bL < \bG$.  $\bL$ can be thought of as a good rational approximation to the stabilizer of $x$ in $\bG(F_\infty)$.  We also show that $\bL$ is of the form $\bigcap_{\sigma \in \text{Gal}(\overline{F} / F)} (y \bH y^{-1})^\sigma$, where $y \in \bG(\overline{F})$ is controlled.  We in fact prove a more general statement, which provides a similar structure theorem for all $\gamma$ of bounded height lying near a conjugate of a fixed subvariety of $\bG$.

\end{itemize}

We next need to estimate $\# \bL(F) \cap \text{supp}(k)$, and Lemma \ref{heckelocal} reduces this to the following local problem:

\begin{itemize}

\item {\bf Estimating intersections in buildings ($\S$\ref{sec6}):}  Suppose $v$ is a finite place at which $\bG$ is split.  Let $T_v$ be a maximal split torus in $\bG_v$, and $K_v$ a hyperspecial maximal compact compatible with $T_v$.  If $\mu \in X_*(T_v)$ is a cocharacter, we wish to estimate the number of cosets in $\bG_v / K_v$ that lie in both the Hecke double coset $K_v \mu(\varpi_v) K_v$ and the image of $\bL_v$.  This is the natural local analogue of estimating $\# \bL(F) \cap \text{supp}(k)$.

\end{itemize}

After these steps, we obtain a bound for the number of Hecke returns given purely in terms of the characters and cocharacters of $\bH$ and $\bG$.  Moreover, this bound will be sharp in certain cases, e.g. if $\bH$ is defined over $F$ and the point $x$ corresponds to $\bH$.  We must determine whether this bound is good enough for us to amplify, which involves the study of:

\begin{itemize}

\item {\bf Weakly small subgroups ($\S\S$\ref{sec8}--\ref{algsec}):} Weak smallness is a condition on a reductive subgroup of a semisimple algebraic group, and is defined by a cocharacter inequality, see Definition \ref{small} and Section \ref{sec:WSdiscuss}.  If $\bH$ is weakly small in $\bG$, then the bound we obtain on Hecke returns is strong enough for amplification to work.  In these sections we show that $\bH$ is weakly small in $\bG$ if and only if $\bG$ satisfies condition ($\mathsf{WS}$).  There are links between (variants of) the weak smallness condition and the spectra of symmetric varieties, see \cite{BM}.

\end{itemize}

\setlength{\leftmargini}{2.5em}

There is another technical difference between this paper and previous amplification arguments that should be emphasized.  The arguments of section \ref{sec6} require $\bL$ to be `unramified' in a certain sense, and so we must choose our amplifier to avoid these places of ramification.  Because $\bL$ depends on $x$, the set of places used in our amplifier must also depend on $x$.

\begin{remark}
We note that it may be possible to relax condition ($\mathsf{WS}$) if $\bG$ doesn't have a maximal compact subgroup defined over $\Q$, as this would allow us to place additional restrictions on $\bL$, but we have not pursued this here.  A good example of such a $\bG$ is the multiplicative group of a division algebra of prime degree.
\end{remark}

\subsection{The significance of condition ($\mathsf{WS}$)}
\label{sec:WSdiscuss}

We now give a second outline of the proof, with the aim of explaining the appearance of condition ($\mathsf{WS}$) in Theorem \ref{main}.  All unexplained notation is standard and defined in Section \ref{sec2}.

Assume we are working over $\Q$, so that we only need to consider one infinite place.  We also assume that there is a connected $\Q$-group $\bH < \bG$ such that $\bH_\infty$ is a maximal compact connected subgroup of $\bG_\infty$.  Let $\bT_H < \bT$ be maximal $\Q$-tori in $\bH$ and $\bG$.  Let $\cP$ be the set of finite places at which $\bT$ and $\bT_H$ split, and all data are unramified.  Let $\| \cdot \|^*$ be the function on $X_*(\bT)$ given by $\| \mu \|^* = \underset{w \in W}{\max} \langle \mu, \rho \rangle$.  We define $\| \cdot \|^*_H$ on $X_*(\bT_H)$ similarly.  These functions are seminorms, and norms if the groups are semisimple.  If $v \in \cP$ and $\mu \in X_*(\bT)$, we define the approximately $L^2$-normalized Hecke operator $\tau(v,\mu) = q_v^{-\| \mu \|^*} 1_{K_v \mu(\varpi_v) K_v}$.

Let $Y$ be an arithmetic congruence manifold associated to $\bG$.  Let $\psi$ be our Hecke-Maass form, and $x \in Y$ some point at which we want to bound $\psi$.  If $\tau$ is a Hecke operator on $Y$, i.e. a weighted correspondence, we let $\tau \cdot x$ denote the weighted set of points obtained by applying $\tau$ to $x$.  Let $N > 0$ be the length of our amplifier, and let $\cP_N = \{ v \in \cP : N/2 < q_v < N \}$.  For simplicity, we shall assume that the Hecke operator that we use to amplify is $\cT \cT^*$, where $\cT = \sum_{v \in \cP_N} \tau(v,\mu)$ for some carefully chosen $\mu \in X_*(\bT)$.  We assume that:

\smallskip
\noindent
$\bullet$ $\cT \cT^*$ acts on $\psi$ with eigenvalue $N^{2+o(1)}$,
\smallskip

\noindent
which implies:

\smallskip
\noindent
$\bullet$ Amplification succeeds if $\cT \cT^* \cdot x$ has mass at most $N^{2-\delta}$ near $x$.
\smallskip

\noindent
This condition is related to the seminorms $\| \cdot \|^*$ and $\| \cdot \|^*_H$ by the following result:

\smallskip
\noindent
$(\star)$ $\tau(v,\mu) \cdot x$ has mass at most $\displaystyle \sum_{\lambda \in W\mu \cap X_*(\bT_H)} q_v^{2\| \lambda \|^*_H - \| \lambda \|^*}$ near $x$.
\smallskip

\noindent
Moreover, for products $\tau(v,\mu) \tau(w,\nu) \cdot x$, the bound is the product of the individual bounds for $\tau(v,\mu)$ and $\tau(w,\nu)$.  Because of our assumption that $\bH$ was defined over $\Q$, this bound is sharp for some $x$, e.g. if $x$ corresponds to the image of $\bH_\infty$ in $Y$.  The appearance of $2\| \lambda \|^*_H - \| \lambda \|^*$ in ($\star$) leads us to make the following two definitions:

\smallskip
\noindent
$\bullet$ We say that $\bH$ is quasi-small in $\bG$ if $\| \lambda \|^* \ge 2\| \lambda \|^*_H$ for all $\lambda \in X_*(\bT_H)$.

\noindent
$\bullet$ We say that $\bH$ is weakly small in $\bG$ if it is quasi-small, and either $\dim \bT_H < \dim \bT$, or there is $\mu \in X_*(\bT)$ such that $\| \mu \|^* > 2 \, \underset{w \in W}{\max} \| w \mu \|^*_H$.
\smallskip

\noindent
Moreover, in Sections \ref{sec8} and \ref{algsec} we prove:

\smallskip
\noindent
$\bullet$ $\bH$ is quasi-small in $\bG$ if and only if $\bG_\infty$ is quasi-split.

\noindent
$\bullet$ $\bH$ is weakly small in $\bG$ if and only if $\bG_\infty$ satisfies ($\mathsf{WS}$).\\

We now show how ($\mathsf{WS}$) and ($\star$) allow us to bound $\cT \cT^* \cdot x$, and in fact why ($\mathsf{WS}$) is almost necessary for doing so.  When expanding $\cT \cT^*$, we obtain diagonal and off-diagonal terms.  There are only $N^{1 + o(1)}$ diagonal terms $\tau(v,\mu) \tau(v,\mu)^*$, and so it suffices to show that $\tau(v,\mu) \tau(v,\mu)^* \cdot x$ has mass $O(1)$ near $x$ for every $v$.  The operator $\tau(v,\mu) \tau(v,\mu)^*$ is still roughly $L^2$ normalized, so when it is expanded in elementary Hecke operators $c_\lambda \tau(v,\lambda)$ we have $c_\lambda \ll 1$.  As a result, combining ($\star$) with the inequality defining quasi-smallness and summing over $\lambda$ provides the required bound for $\tau(v,\mu) \tau(v,\mu)^* \cdot x$.

Note that if $\bG$ is absolutely almost simple, then it is very hard to control the set of $\lambda$ that occur in the expansion of $\tau(v,\mu) \tau(v,\mu)^*$, other than saying that they lie in some ball of radius determined by $\mu$.  Therefore, if quasi-smallness failed then one would probably encounter a $\lambda$ with $2 \| \lambda \|^*_H > \| \lambda \|^*$, and the contribution this made to our bound for $\tau(v,\mu) \tau(v,\mu)^* \cdot x$ could easily be of order greater than $N$.

There are $N^{2+o(1)}$ off-diagonal terms, and so for these we must save a power of $N$.  This is exactly what the two extra conditions in weak smallness allow us to do.  The first one lets us choose $\mu$ so that $W\mu \cap X_*(\bT_H)$ is empty, and the sums in the bound ($\star$) vanish, while the second gives us the required power saving in ($\star$).  As in the diagonal case, it is difficult to see how to proceed without one of these conditions.

\begin{remark}
The fact that one obtains such a clean equivalence between the conditions on $\bG_\infty$ and our seminorm inequalities is interesting, and is related to the spectra of symmetric varieties.  See Section \ref{sec8} and the references there, or \cite[Section 2]{BM} for a more general discussion of the link between amplification and these spectra.
\end{remark}

\begin{remark}
More generally, our methods establish the bound ($\star$) for the number of $\gamma \in \bG(\Q)$ that lie in the support of $\tau(v,\mu)$, and near a general reductive subgroup $\bH_\infty < \bG_\infty$.  Moreover, we can rule out `concentration near $\bH_\infty$ of Hecke eigenfunctions on $\bG(\Q) \backslash \bG(\A)$' exactly when $\bH_\infty$ is weakly small in $\bG_\infty$.  We hope that this will have applications to other problems in the asymptotics of automorphic forms, such as the Quantum Unique Ergodicity problem, bounds for higher-dimensional periods, and Kakeya norms of Maass forms.
\end{remark}

\begin{remark}
The results of \cite{BM} provide a partial converse to the above discussion.  In particular, it is proved there that if $\bG(\R)$ has a maximal compact subgroup defined over $\Q$, $\bG$ is anisotropic over $\Q$, and $\bG(\R)$ is not split, one can prove power growth of Maass forms by using the large number of Hecke returns.
\end{remark}

{\bf Acknowldegements:}  We would like to thank Farrell Brumley, Florian Herzig, Roman Holowinsky, Alireza Salehi Golsefidy, Steven Sam, Lior Silberman, and Akshay Venkatesh for helpful conversations.

\section{Notation}
\label{sec2}

We now introduce the notation used in the proof of Theorem \ref{main} in Section \ref{sec3}, including the subgroup $\bH$ that gives a maximal compact subgroup of $\bG(F_{v_0})$.

\subsection{Existence of a maximal compact subgroup over a number field}
\label{sec21}

\begin{lemma}
\label{Hdef}

There is a number field $F \subset F_1 \subset F_{v_0}$, and a connected subgroup $\bH < \bG$ defined over $F_1$, such that $\bH(F_{v_0})$ is a maximal connected compact subgroup of $\bG(F_{v_0})$.

\end{lemma}

\begin{proof}

We shall denote $v_0$ simply by $v$ throughout the proof.  Let $\g_F$ be the Lie algebra of $\bG$ over $F$.  We first prove the existence of $F \subset F_1 \subset F_v$ such that $\g_F \times F_v$ has a Cartan decomposition defined over $F_1$.

By \cite[Proposition 3.7]{Borel63}, there exists a Lie algebra $\g_F^0/F$ with an $F$-involution $\theta^0$ such that $\theta^0$ is a Cartan involution of $\g_F^0 \times F_v$, and there is an isomorphism $\iota : \g_F^0 \times F_v \to \g_F \times F_v$.  Choose bases $X_1, \ldots, X_r$ of $\g_F^0$ and $Y_1, \ldots, Y_r$ of $\g_F$, and define $a_{ij} \in F_v$ for $1 \le i, j \le r$ by

\bes
\iota(X_i) = \sum_{j=1}^r a_{ij} Y_j.
\ees
Let $R$ be the $F$-subalgebra of $F_v$ generated by $a_{ij}$.  We have a map $F[x_{ij}] \to R$ sending $x_{ij}$ to $a_{ij}$, and we let $I$ be its kernel so that $R \simeq F[x_{ij}] / I$.  Any homomorphism of $F$-algebras $\phi : R \to F_v$ gives a homomorphism $\psi : \g_F^0 \to \g_F \times F_v$ of Lie algebras over $F$ by

\bes
\psi( X_i) = \sum_{j=1}^r \phi(x_{ij}) Y_j.
\ees

If we let $V \subset F_v^{r^2}$ be the set defined by the equations in $I$, then homomorphisms $\phi : R \to F_v$ are in bijection with points on $V$.  There is a point $p_0 \in V$ corresponding to the natural embedding $R \to F_v$, and by Lemma \ref{algdense} we may find a point $p \in V$ arbitrarily close to $p_0$ whose coordinates are algebraic.  Let $F_1 \subset F_v$ be the field generated by these coordinates, and let $\g_{F_1}^0 = \g_F^0 \otimes F_1$ and $\g_{F_1} = \g_F \otimes F_1$.  Then $p$ gives a homomorphism $\psi : \g^0_{F_1} \to \g_{F_1}$.

If $p$ is sufficiently close to $p_0$ then $\psi$ will be an isomorphism.  If we let $\theta$ be the involution of $\g_{F_1}$ obtained by transferring $\theta^0$ via $\psi$, this implies that $\theta$ is a Cartan involution of $\g_{F_1} \times F_v$.  We let $\gk_{F_1} \subset \g_{F_1}$ be the fixed subspace of $\theta$, so that $\gk_{F_1} \times F_v$ is a maximal compactly embedded subalgebra of $\g_{F_1} \times F_v$.

$\theta$ induces an involution of $\bG^\text{ad} \times F_1$, and we let $\bH^\text{ad}$ be its fixed point subgroup.  Let $\bH < \bG \times F_1$ be the neutral component of the preimage of $\bH^\text{ad}$.  We have $\text{Lie}(\bH) = \gk_{F_1}$, and $\text{Lie}(\bH(F_v)) = \gk_{F_1} \times F_v$.  This implies that $\bH(F_v)$ is compact, hence connected by the result at the top of p. 277 of \cite{Bo2}, and that $\bH(F_v)$ is a maximal compact connected subgroup of $\bG(F_v)$.

\end{proof}

\begin{lemma}
\label{algdense}

Let $F \subset \R$ be a number field, and let $V \subset \R^n$ be an algebraic set defined by polynomials with coefficients in $F$.  Then the set of points in $V$ with algebraic coefficients is dense in $V$ with the real topology.

\end{lemma}

\begin{proof}

This follows from Tarski's theorem \cite{Ta} that the first-order theory of real-closed fields is complete, which implies in particular that a first order statement is true over $\R$ if and only if it is true over $\R \cap \overline{\Q}$.  Let $y \in V$, and let $B$ be a box around $y$ whose corners have rational coordinates.  Then the statement ``$V \cap B \neq \emptyset$'' can be expressed in the first-order language of ordered fields.  To see this, assume for simplicity that $V$ is defined by a single polynomial $f(x) = \sum_\gamma a_\gamma x^\gamma$ where $x = (x_1, \ldots, x_n)$.  Let $p_\gamma$ be a minimal polynomial for $a_\gamma$ over $\Q$, and let $q_\gamma, q_\gamma' \in \Q$ be such that $a_\gamma$ is the unique real root of $p_\gamma$ in $(q_\gamma, q'_\gamma)$.  Consider the first-order statement

\bes
\forall \gamma \forall b_\gamma : p_\gamma(b_\gamma) = 0, q_\gamma < b_\gamma < q_\gamma' \implies \exists x : x \in B, \sum b_\gamma x^\gamma = 0.
\ees
(Of course, this is a grammatically incorrect abbreviation of what one would actually write.)  This statement is true over $\R$.  Indeed, our choice of $p_\gamma$ and $q_\gamma, q_\gamma'$ imply that the only $b_\gamma$ that one quantifies over is $a_\gamma$, and then the existence statement is true because of our chosen $y$.  It follows that the statement is also true over $\R \cap \overline{\Q}$.  Over this field, one must also have $b_\gamma = a_\gamma$, and so one obtains algebraic points in $V \cap B$.

\end{proof}

\subsection{Algebraic groups}
\label{alggps}

Let $F_1$ and $\bH$ be as in Lemma \ref{Hdef}.  Let $\cO$ and $\cO_1$ be the integers of $F$ and $F_1$.  If $v$ is a finite place of $F$ we let $F_v$, $\cO_v$, and $\varpi_v$ be the completion, the ring of integers, and a uniformizer.  We let $k_v$ be the residue field, and denote its order by $q_v$.

Choose maximal tori $\bT < \bG$ and $\bT_H < \bH$, defined over $F$ and $F_1$ respectively, with $\bT_H < \bT$.  Fix an embedding $\rho : \bG \to SL_N$, and define $\gG$ and $\gT$ to be the schematic closures of $\bG$ and $\bT$ inside the group scheme $SL_N / \cO$.  We likewise define $\gH$ and $\gT_H$  to be the closures of $\bH$ and $\bT_H$ inside $SL_N/\cO_1$. These closures are group schemes over $\cO$ (resp. $\cO_1$), and all the inclusions between them over $F$ (resp. $F_1$) extend to closed embeddings over $\cO$ (resp. $\cO_1$).  By \cite[3.9]{T}, we may choose $D > 0$ such that all of these group schemes have connected reductive fibers over $\cO[1/D]$ or $\cO_1[1/D]$ respectively.

Let $X^*(\bT)$ and $X_*(\bT)$ denote the group of characters and cocharacters of $\bT$ over $\overline{F}$.  We define a norm on $X_*(\bT)$ that we shall use throughout the paper.  Let $\Delta$ be the set of roots of $\bT$ in $\bG$, and let $\Delta^+$ be a choice of positive roots.  Let $W$ be the Weyl group of $\bG$, and $\rho$ the half-sum of $\Delta^+$.  We define a norm $\| \cdot \|^*$ on $X^*(\bT)$ by

\bes
\| \mu \|^* = \underset{w \in W}{\max} \langle w \mu, \rho \rangle.
\ees
(Note that we have $\| \mu \|^* = \| -\mu \|^*$ because $\rho$ and $-\rho$ lie in the same Weyl orbit.)  We define a seminorm $\| \cdot \|^*_H$ on $X_*(\bT_H)$ in a similar way, which is a norm if $\bH$ is semisimple.

\subsection{Metrics}

For $\gamma \in \bG(F)$, let $\| \gamma \|_f$ be the LCM of the denominators of the norms of the matrix entries of $\rho(\gamma)$.  Fix a left-invariant Riemmanian metric on $\bG(F_{v_0})$.  Let $d( \cdot, \cdot)$ be the associated distance function.  We define $d(x,y) = \infty$ when $x$ and $y$ are in different connected components of $\bG(F_{v_0})$ with the topology of a real manifold.

\subsection{Adelic groups}
\label{sec2adelic}

Let $\A$ and $\A_f$ be the adeles and finite adeles of $F$.  We choose a compact subgroup $K = \prod_{v} K_v$ of $\bG(\A)$ such that $K_{v_0} = \bH(F_{1,w_0})$, where $w_0 | v_0$ is the place of $F_1$ corresponding to the embedding $F_1 \subset F_{v_0}$ of Lemma \ref{Hdef}, $K_v$ is maximal connected compact at the other infinite places, and $K_v = \rho^{-1}( SL_N(\cO_v))$ for $v < \infty$.  Because $\gG$ was closed in $SL_N / \cO[1/D]$, this implies that $K_v$ is the hyperspecial subgroup $\gG(\cO_v)$ for $v \nmid D \infty$.  For each $v < \infty$, let $dg_v$ be the Haar measure on $\bG(F_v)$ that assigns $K_v$ measure 1.  Choose a Haar measure $dg_\infty$ on $\bG(F_\infty)$, and let $dg = \otimes_{v} dg_v$.  All convolutions on $\bG$ will be defined with respect to these measures.  If $f \in C^\infty_0(\bG(\A))$, we define the operator $\pi(f)$ on $L^2(\bG(F) \backslash \bG(\A) )$ by

\be
\label{regrep}
[\pi(f)\phi](x) = \int_{\bG(\A)} \phi(xg) f(g) dg.
\ee
If $f \in C^\infty_0(\bG(\A))$, we define $f^*$ by $f^*(g) = \overline{f}(g^{-1})$, so that $\pi(f)$ and $\pi(f^*)$ are adjoints.  We define $Y = \bG(F) \backslash \bG(\A) / K$.  Choose compact sets $\Omega_Y \subset Y$ and $\Omega = \prod_v \Omega_v \subset \bG(\A)$ such that the projection of $\Omega$ to $Y$ contains $\Omega_Y$.  We assume that $\Omega_v = K_v$ for $v \nmid D \infty$.

\subsection{Hecke algebras}

Let $\cH$ be the convolution algebra of functions on $\prod_{v \nmid D \infty} \bG(F_v)$ that are compactly supported and bi-invariant under $\prod_{v \nmid D \infty} K_v$.  We identify $\cH$ with a subalgebra of $C^\infty_0(\bG(\A_f))$ in the natural way.  For each $v \nmid D \infty$, we let $\cH_v$ be the algebra of functions on $\bG(F_v)$ that are compactly supported and bi-invariant under $K_v$, which we identify with a subalgebra of $\cH$.

We let $\cP$ be the set of finite places $v$ of $F$ such that that $v \nmid D$, $\bT_v$ is split, and there is a split place $w|v$ of $F_1$ with $\bT_{H,w}$ split (so in particular $\bG_v$ and $\bH_w$ are also split).  If $v \in \cP$, our assumptions imply that $\gT \times \cO_v$ is a smooth closed subgroup scheme of $\gG \times \cO_v$ with reductive fibers, and \cite[5.1.33]{BT2} (combined with e.g. Proposition \ref{stdscheme} to show that $\gT \times \cO_v$ is a standard $\cO_v$-torus) then implies that the apartment of $\bT_v$ contains the point in the building of $\bG_v$ corresponding to $K_v$.  It follows that $\bG(F_v)$ has a Cartan decomposition with respect to $K_v$ and $\bT_v$.

\subsection{Lie algebras}

Let $\g$ be the real Lie algebra of $ \bG(F_\infty)$, and let $\g = \gk + \p$ be the Cartan decomposition associated to $K_\infty$.  Let $\ga \subset \p$ be a maximal abelian subspace.  We let $\Delta_\R$ be the roots of $\ga$ in $\g$, and let $\Delta_\R^+$ be a choice of positive roots.  We let $W_\R$ be the Weyl group of $\Delta_\R$.  For $\alpha \in \Delta_\R$, we let $m(\alpha)$ denote the dimension of the corresponding root space.  We denote the Killing form on $\g$ and $\g^*$ by $\langle \cdot, \cdot \rangle$.  We equip $\ga$ and $\ga^*$ with the norm obtained by restricting the Killing form.  For $\lambda \in \ga^*_\C$, we define

\be
\label{Dlambda}
D(\lambda) = \prod_{\alpha \in \Delta^+_\R} ( 1 + | \langle \alpha, \lambda \rangle| )^{m(\alpha)}.
\ee

\subsection{Spherical functions}

Let $\bG_\infty^0$ denote the connected component of the identity in $\bG(F_\infty)$.  If $\mu \in \ga^*_\C$, we define $\varphi_\mu$ to be the corresponding spherical function on $\bG_\infty^0$.  If $k \in C^\infty_0(\bG_\infty^0)$, we define its Harish-Chandra transform by

\bes
\widehat{k}(\mu) = \int_{\bG_\infty^0} k(g) \varphi_{-\mu}(g) dg_\infty.
\ees
If $k$ is $K_\infty$-biinvariant, we have the inversion formula

\bes
k(g) = \frac{1}{|W_\R|} \int_{\ga^*} \widehat{k}(\mu) \varphi_\mu(g) |c(\mu)|^{-2} d\mu
\ees
where $c(\mu)$ is Harish-Chandra's $c$-function; see \cite[Ch. II $\mathsection$3.3]{He2}.

\subsection{Maass forms}

Let $\psi \in L^2(Y)$ be an eigenfunction of the ring of invariant differential operators on $Y$ and the Hecke algebra $\cH$.  If $\cT \in \cH$, we define $\cT(\psi)$ by the equation $\pi(\cT) \psi = \cT(\psi) \psi$.  We assume that $\| \psi \|_2 = 1$.  We define the spectral parameter $\lambda \in \ga^*_\C / W_\R$ of $\psi$ to be the unique element such that $\psi$ and $\varphi_\lambda$ have the same eigenvalues under the invariant differential operators.  The Laplace eigenvalue of $\psi$ is given by $(\Delta + C_1(G) + \langle \lambda, \lambda \rangle) \psi = 0$ for some $C_1(G) \in \R$.  We have the trivial bound

\bes
\| \psi|_{\Omega_Y} \|_\infty \ll_{\Omega_Y} D(\lambda)^{1/2},
\ees
which is equivalent to (\ref{Sarnak}) when $\lambda$ lies in a regular cone in $\ga^*$, but is stronger for singular $\lambda$.

\section{Amplification}
\label{sec3}

This section contains the proof of Theorem \ref{main}.  In Sections \ref{sec32}-\ref{Ksmallest} we carry out preliminary steps, namely constructing the amplifier at infinite places, proving a pre-trace inequality for $\psi$, and deriving an estimate for Hecke returns from the results of Sections \ref{sec5} and \ref{sec6}.  In Section \ref{sec3amp} we prove Theorem \ref{main} by combining these ingredients with the amplifier at finite places constructed in Section \ref{sec7}.  In this section, all implied constants will be assumed to depend on $\bG$.

\subsection{The amplifier at infinite places}
\label{sec32}

Define $\ga^*_\text{un} = \{ \xi \in \ga^*_\C : W_\R \xi = W_\R \overline{\xi}, \| \Im \xi \| \le \| \rho \| \}$.  By \cite[Prop. 3.4]{DKV}, $\ga^*_\text{un}$ contains the spectral parameter of any spherical unitary representation of $\bG_\infty^0$.  We denote the component of $\xi \in \ga_\C^*$ in $\g_{v_0, \C}$ by $\xi_0$.  We shall construct bi-$K_\infty$-invariant functions $k_\xi, k_\xi^0 \in C^\infty_0( \bG^0_\infty)$ for all $\xi \in \ga^*_\text{un}$ with the following properties:

\begin{enumerate}

\item
\label{kxi0}
$k_\xi = k_\xi^0 * k_\xi^0$.

\item
\label{kxi1}
$| \widehat{k}^0_\xi(-\xi) | \ge 1$.

\item
\label{kxi2}
There is a fixed compact set $B \subset \bG^0_\infty$ such that $\text{supp}(k_\xi) \subset B$.

\item
\label{kxi3}
If $\xi_0$ is $(A,\sigma)$-balanced in $\g_{v_0}$, then $k_\xi(x) \ll_{A,\sigma} D(\xi) (1 + \| \xi_0 \|^\sigma d(x, K_{v_0}) )^{-1/2}$.

\end{enumerate}

\noindent
For this we first take a real function $h_0 \in C^\infty( \ga^* )$ of Paley-Wiener type, which we implicitly extend to a function on $\ga^*_\C$.  Let
\[
h_\xi^0(\nu) = \sum_{w \in W_\R} h_0( w\nu - \Re \xi),
\]
and let $k_\xi^0$ be the bi-$K_\infty$-invariant function on $\bG_\infty^0$ satisfying $\widehat{k}_\xi^0(-\mu) =h_\xi^0(\mu)$.  We define $k_\xi = k_\xi^0 * k_\xi^0$ and $h_\xi = ( h_\xi^0 )^2$, so that $\widehat{k}_\xi(-\mu) =h_\xi(\mu)$.  The existence of $B$ satisfying (\ref{kxi2}) follows from the Paley-Wiener theorem of \cite{Ga}, and conditions (\ref{kxi1}) and (\ref{kxi3}) are given by the following lemmas.

\begin{lemma}

If $\xi_0 \in \ga^*_\textup{un}$ is $(A,\sigma)$-balanced in $\g_{v_0}$, we have

\be
\label{kt}
k_\xi(x) \ll_{A,\sigma} D(\xi) (1 + \| \xi_0 \|^\sigma d(x, K_{v_0}) )^{-1/2}.
\ee

\end{lemma}

\begin{proof}

Let $\ga_{v_0} = \ga \cap \g_{v_0}$, and let $\g_{v_0} = \oplus \g_i$ be the decomposition of $\g_{v_0}$ into $\R$-simple factors.  If $\nu_0 \in \ga_{v_0}^*$, we let $\nu_{0,i}$ be the $\g_i$ component of $\nu_0$.  Let $B_0$ be the projection of $B$ to $\bG_{v_0}$.  We begin by showing that
\be
\label{phiv0bd}
\varphi_{\nu_0}(x_0) \ll (1 + \min \{ \| \nu_{0,i} \| \} d( x_0, K_{v_0} ) )^{-1/2}
\ee
for $\nu_0 \in \ga_{v_0}^*$ and $x_0 \in B_0$.  To do this, we may assume that $\bG$ is of adjoint type, so that $\bG_{v_0}$ decomposes as a product $\prod \bG_i$ of $\R$-almost simple groups.  We let $K_{v_0} = \prod K_i$ and $x_0 = (x_{0,i})_i$.  Applying the bound of \cite[Thm. 2]{BP} or \cite[Prop. 7.2]{MT} on each $\bG_i$ gives
\[
\varphi_{\nu_0}(x_0) \ll \prod (1 + \| \nu_{0,i} \| d( x_{0,i}, K_i) )^{-1/2},
\]
and taking the term in the product with $d( x_{0,i}, K_i)$ largest gives (\ref{phiv0bd}).

We now prove (\ref{kt}).  We may assume that $x \in B$.  Inverting the Harish-Chandra transform gives

\bes
k_\xi(x) = \frac{1}{|W_\R|} \int_{\ga^*} \varphi_\nu(x) h_\xi(-\nu) |c(\nu)|^{-2} d\nu.
\ees
If we apply (\ref{phiv0bd}) on $\bG(F_{v_0})$ and the trivial bound $| \varphi | \le 1$ at other places, this becomes

\bes
k_\xi(x) \ll \int_{\ga^*} (1 + \min \{ \| \nu_{0,i} \| \} d(x, K_{v_0}) )^{-1/2} h_\xi(-\nu) |c(\nu)|^{-2} d\nu.
\ees
The result now follows from the fact that $h_\xi$ is raplidly decaying away from the set $W_\R \Re \xi$, and that $|c(\nu)|^{-2} \ll D(\nu)$.

\end{proof}

\begin{lemma}

If $h_0$ is chosen correctly, then we have $| \widehat{k}^0_\xi(-\xi) | \ge 1$ for all $\xi \in \ga^*_\textup{un}$.

\end{lemma}

\begin{proof}

Let $0 < \delta < 1$.  Let $b \in C^\infty_0(\ga)$ be non-negative, supported in the $\delta$-ball around 0, and satisfy $\int b = 4$.  We further assume that $b = b_0 * b_0$ for some even real-valued $b_0$, which ensures that $\widehat{b}(\nu)$ is non-negative for $\nu \in \ga^*$.  We wish to show that if we choose $h_0 = \widehat{b}$ in our construction of $h_\xi$, then $| h^0_\xi(\xi) | \geqslant 1$.

We let $C > 0$ be a constant depending only on $G$ that may vary from line to line.  We start by showing that $\Re \widehat{b}(\nu) \geqslant -C\delta$ for all $\nu \in \ga^*_\C$ with $\| \Im \nu \| \leqslant \| \rho \|$.  We have
\begin{align*}
\widehat{b}(\nu) & = \int_\ga b(H) e^{-i \nu(H)} dH \\
& = \int_\ga b(H) [ e^{-i \Re \nu(H)} + ( e^{-i \nu(H)} - e^{-i \Re \nu(H)} ) ] dH \\
& = \widehat{b}(\Re \nu) + \int_\ga b(H)  e^{-i \Re \nu(H)} ( e^{\Im \nu(H)} - 1 ) dH.
\end{align*}
As $\| \Im \nu \| \leqslant \| \rho \|$, we have $| e^{\Im \nu(H)} - 1 | \leqslant C \delta$ for all $H \in \text{supp}(b)$, so that
\[
\Re \widehat{b}(\nu) \geqslant \widehat{b}(\Re \nu) - C\delta \geqslant -C\delta
\]
as required.

We now take $h_0 = \widehat{b}$, and construct $h_\xi^0$ as before.  If we choose $\delta$ small, we will have $\Re h_0(i \nu) \geqslant 2$ for all $\nu \in \ga^*$ with $\| \nu \| \leqslant \| \rho \|$, and moreover
\begin{align*}
\Re h_\xi^0(\xi) & = \sum_{w \in W_\R} \Re h_0(w\xi - \Re \xi) \\
& \geqslant \Re h_0(\xi- \Re \xi) - C \delta \geqslant 1.
\end{align*}
This implies $| h^0_\xi(\xi) | \geqslant 1$ as required.

\end{proof}

\subsection{A pre-trace inequality}

We shall use the following amplification inequality, which is often referred to as a pre-trace inequality.

\begin{lemma}
\label{ampineq}

If $\omega \in C_c(\bG(\A))$ and $f \in L^2( \bG(F) \backslash \bG(\A))$ satisfies $\| f \|_2 = 1$, we have

\bes
| [\pi(\omega) f](x)|^2 \le \sum_{\gamma \in \bG(F)} (\omega \omega^*)(x^{-1} \gamma x).
\ees

\end{lemma}

\begin{proof}

We have 

\bes
[\pi( \omega) f](x) = \int_{\bG(\A)} f(g) \omega(x^{-1} g) dg.
\ees
Folding the sum over $\bG(F)$ gives

\bes
[\pi( \omega) f](x) = \int_{ \bG(F) \backslash \bG(\A)} f(g) \sum_{\gamma \in \bG(F)} \omega(x^{-1} \gamma g) dg.
\ees
If we apply Cauchy-Schwartz and expanding the square, we obtain

\begin{align*}
| [\pi( \omega) f](x) |^2 & \le \int_{ \bG(F) \backslash \bG(\A)} \bigg| \sum_{\gamma \in \bG(F)} \omega(x^{-1} \gamma g) \bigg|^2 dg \\
& = \int_{ \bG(F) \backslash \bG(\A)}  \sum_{\gamma_1, \gamma_2 \in \bG(F)} \omega(x^{-1} \gamma_1 g) \overline{\omega(x^{-1} \gamma_2 g)} dg.
\end{align*}
Unfolding again gives

\be
\label{amp1}
| [\pi( \omega) f](x) |^2 \le \sum_{\gamma \in \bG(F)} \int_{\bG(\A)} \omega(x^{-1} \gamma g) \overline{\omega(x^{-1} g)} dg.
\ee
Because $\overline{\omega(x^{-1} g)} = \omega^*(g^{-1} x)$, we have

\begin{align*}
\int_{\bG(\A)} \omega(x^{-1} \gamma g) \overline{\omega(x^{-1} g)} dg & = \int_{\bG(\A)} \omega(x^{-1} \gamma g) \omega^*(g^{-1} x)  dg \\
& = \omega \omega^* ( x^{-1} \gamma x).
\end{align*}
Inserting this into (\ref{amp1}) completes the proof.

\end{proof}

\subsection{Estimating Hecke returns}
\label{Ksmallest}

We now apply the results of Sections \ref{sec5} and \ref{sec6} to derive the bound on Hecke returns that we will use.  Let $x \in \Omega_Y$ be the point at which we want to bound $\psi$.  Let $B \subset \bG(F_\infty)$ be the compact set from Section \ref{sec32} containing the support of $k_\xi$ for all $\xi \in \ga^*_\text{un}$, and for $\delta > 0$ let $B_\delta = \{ g \in B : d( g, K_{v_0}) < \delta \}$.  If $v \neq w$ and $\mu, \nu \in X_*(\bT)$, we define

\[
\cM(v,w,\mu,\nu) = \{ \gamma \in \bG(F) : x^{-1} \gamma x \in B_\delta \cdot (K_v \mu(\varpi_v)K_v) \cdot (K_w \nu(\varpi_w)K_w) \cdot K^{v,w} \}.
\]
Roughly speaking, $\cM(v,w,\mu,\nu)$ is the set of rational elements in the support of the Hecke operator $1_{K_v \mu(\varpi_v)K_v} 1_{K_w \nu(\varpi_w)K_w}$ that move $x$ by at most $\delta$.   We have the following bound for $\# \cM(v,w,\mu,\nu)$.

\begin{prop}
\label{returns}

There exist $C, M > 0$ with the following property.  Let $X > 0$ be given.  There exists $\cQ \subset \cP$ with $\#(\cP - \cQ) \ll \log X$ such that if $v, w \in \cQ$ and $\mu, \nu \in X_*(\bT)$ satisfy $q_v^{\| \mu \|^*}, q_w^{\| \nu \|^*} \le X$ and $\delta < C X^{-M}$, then
\[
\# \cM(v,w,\mu,\nu) \ll \sum_{\alpha \in \Lambda_\mu} q_v^{ 2 \| \alpha \|_H^*} \sum_{\beta \in \Lambda_\nu} q_w^{ 2 \| \beta \|_H^*},
\]
where $\Lambda_\mu = W\mu \cap X_*(\bT_H)$ and likewise for $\Lambda_\nu$.  Note that $\cQ$ depends on $x$.

\end{prop}

The natural first step in the proof is to apply Proposition \ref{diophantine}.  However, before doing this we must bound the heights of elements in $\cM(v,w,\mu,\nu)$, which lets us compare $\cM(v,w,\mu,\nu)$ with the set denoted $\cM(x,\delta,T)$ in Section \ref{sec5}.

\begin{lemma}
\label{height}

There is a constant $C > 0$ such that if $v \in \cP$, $\mu \in X_*(\bT)$ and $g \in K_v \mu(\varpi_v)K_v$, then $\| g \|_v \le q_v^{C \| \mu \|^*}$.

\end{lemma}

\begin{proof}

Consider $\rho$ as a representation of $\bG$, and let $\Omega \subset X^*(\bT)$ be the multiset of weights of this representation. It follows that $\rho(\mu(\varpi_v))$ is semisimple with eigenvalues $\{ \varpi_v^{\langle \omega, \mu \rangle } : \omega \in \Omega \}$. By \cite[Lemma 2.17]{ST}, there is $x \in GL_d(\cO_v)$ such that $x\rho(\bT_v)x^{-1}$ is diagonal, and so if we define $A = \max \{-\langle \omega, \mu \rangle : \omega \in \Omega \}$ then we have $\| \rho(\mu(\varpi_v) ) \|_v = \| x \rho(\mu(\varpi_v))x^{-1} \|_v = q_v^A$. Because $\rho(K_v) \subset SL_N(\cO_v)$, the same holds for $g$.  The observation that $A \le C \| \mu \|^*$ for $C > 0$ depending only on $\bG$ finishes the proof.

\end{proof}
 
\begin{cor}
\label{height2}

There are $C, M > 0$ such that if $v, w \in \cP$ and $\mu, \nu \in X_*(\bT)$ satisfy $q_v^{\| \mu \|^*}, q_w^{\| \nu \|^*} \le X$ and $(x^{-1} \gamma x)_f \in (K_v \mu(\varpi_v)K_v) \cdot (K_w \nu(\varpi_w)K_w) \cdot K^{v,w}$, then $\| \gamma \|_f \le C X^M$.

\end{cor}
 
 \begin{proof}
 
 We have $\| x^{-1} \gamma x \|_f \le C_1 X^{A_2}$ by Lemma \ref{height}, and because $x \in \Omega$ the same is true for $\| \gamma \|_f$ (possibly with a different $C_1$).
 
 \end{proof}

We may now apply Proposition \ref{diophantine}, which gives the following.
 
 \begin{cor}
 
 There exist $C, M > 0$ with the following property.  If $X > 0$, there exist $Q \in \N$, an extension $F_2/F_1$, and $y \in \gG( \cO_2 [1/DQ] )$ with the following properties:  
 
 \begin{itemize}
 
 \item $Q$ has $\ll \log X$ prime factors
 
 \item $|F_2 : F| \le C_2(\bG)$
 
 \item $F_2/F$ is Galois and unramified outside $Q$.
 
 \item If we define $\displaystyle \bL = \bigcap_{\sigma \in \textup{Gal} (F_2/F)} (y \bH y^{-1})^\sigma$ then $\cM(v,w,\mu,\nu) \subset \bL(F)$ provided $v, w \in \cP$ and $\mu, \nu \in X_*(\bT)$ satisfy $q_v^{\| \mu \|^*}, q_w^{\| \nu \|^*} \le X$, and $\delta < C X^{-M}$.

\end{itemize}
 
 \end{cor}
 
 \begin{proof}
 
 Apply Proposition \ref{diophantine} with the data $(\bG, \bH, B_1, B_2)$ chosen to be $(\bG, \bH, B, \Omega_\infty)$, $T = C X^M$ with $C,M > 0$ as in Corollary \ref{height2}, and our chosen embedding $\rho$.  Let $C, M > 0$, $Q \in \N$, $F_2/F_1$, and $y \in \bG( F_2 )$ be the data produced.  Because $\rho(y) \in SL_N( \cO_2[1/DQ])$, and $\rho$ was a closed embedding, this implies that $y \in \gG( \cO_2 [1/DQ] )$.  The first three conditions above clearly hold.  For the last one, Corollary \ref{height2} implies that the set $\cM(x, \delta, C_2 X^{A_2} )$ defined in Section \ref{sec5} contains $\cM(v,w,\mu,\nu)$, and so the last condition holds if $\delta < C X^{-M}$.
 
 \end{proof}

We wish to show that Proposition \ref{returns} holds with this $C$ and $M$, and $\cQ \subset \cP$ the set of  places not dividing $Q$.  We note that $\#(\cP - \cQ) \ll \log X$.  We now let $v, w, \mu, \nu, X,$ and $\delta$ be as in Proposition \ref{returns}, and show that the required bound for $\# \cM(v,w,\mu,\nu)$ holds.  The following lemma reduces this problem to a local one.

\begin{lemma}
\label{heckelocal}

If we define
\[
\cL(v, \mu) = (\bL(F_v) K_v \cap K_v \mu(\varpi_v) K_v) / K_v,
\]
and likewise for $\cL(w,\nu)$, then we have $\# \cM(v,w,\mu,\nu) \ll \# \cL(v,\mu) \# \cL(w,\nu)$.

\end{lemma}

\begin{proof}

Because $x \in \Omega$, our assumptions on $\Omega$ and $\cP$ imply that $x_v \in K_v$ and $x_w \in K_w$.  It follows that if $\gamma \in \cM(v,w,\mu,\nu)$, then $\gamma_v \in \bL(F_v) \cap K_v \mu(\varpi_v) K_v$ and $\gamma_w \in \bL(F_w) \cap K_w \nu(\varpi_w) K_w$.  This gives a map

\bes
\cM(v,w,\mu,\nu) \rightarrow \cL(v,\nu) \times \cL(w,\nu).
\ees
We shall show that the fibers of this map have bounded size.  Suppose $\gamma_1$ and $\gamma_2 \in \cM(v,w,\mu,\nu)$ lie in the same pair of cosets $(g_v K_v, g_w K_w)$.  Then $\gamma_1^{-1} \gamma_2$ must lie in a compact set $C \subset \bG(\A)$ depending only on $K_f$ and $\Omega$, and the result now follows from the fact that $\bG(F) \cap C$ is finite.

\end{proof}

It therefore suffices to prove that $\# \cL(v,\mu) \ll \sum_{\alpha \in \Lambda_\mu} q_v^{ 2 \| \alpha \|_H^*}$ for $v \in \cQ$ and $\mu \in X_*(\bT)$.  We now use $w$ to denote the place of $F_1$ above $v$ in the definition of $\cP$.  The set $\cL(v,\mu)$ is similar to the one denoted $\cL(\mu)$ in Section \ref{sec6}, and they become the same once we enlarge $\bL_v$ to the group $\bL'$ defined below.

\begin{lemma}

Let $w' | w$ be a place of $F_2$, and let $\bL'$ be the subgroup of $\bG \times F_{1,w}$ defined by
\[
\bL' = \bigcap_{\sigma \in \textup{Gal}(F_{2,w'} / F_{1,w}) } (y_{w'} \bH y_{w'}^{-1})^\sigma.
\]
We then have $\bL(F_v) \subset \bL'(F_{1,w})$ under the identification $\bG(F_v) \simeq \bG(F_{1,w})$.

\end{lemma}

\begin{proof}

By the definition of $\bL$ as a scheme theoretic intersection, we have
\[
\bL(F_v) = \bG(F_v) \cap (y \bH y^{-1})( F_2 \otimes_F F_v) \subset \bG( F_2 \times_F F_v),
\]
and likewise
\[
\quad \bL'(F_{1,w}) = \bG(F_{1,w}) \cap (y \bH y^{-1})( F_{2,w'}).
\]
Consider the $F_v$-algebras $F_v \subset F_1 \times_F F_v \subset F_2 \times_F F_v$.  Let $\pi_w$ be the projection $F_1 \times_F F_v \to F_{1,w}$, which realizes the isomorphism $F_v \simeq F_{1,w}$ when restricted to $F_v$.  $\pi_w$ extends to a projection $F_2 \times_F F_v \to F_2 \times_{F_1} F_{1,w}$ after tensoring with $F_2$ over $F_1$.  Applying $\pi_w$ to $\bL(F_v)$ gives
\begin{align*}
\pi_w( \bL(F_v)) & \subset \pi_w( \bG(F_v) ) \cap \pi_w( (y \bH y^{-1})( F_2 \otimes_F F_v) ) \\
& = \bG(F_{1,w}) \cap (y \bH y^{-1})( F_2 \times_{F_1} F_{1,w}) \\
& \subset \bG(F_{1,w}) \cap (y \bH y^{-1})( F_{2,w'}) = \bL'(F_{1,w})
\end{align*}
as required.

\end{proof}

If we define
\[
\cL'(w,\mu) = ( \bL'(F_{1,w}) K_w \cap K_w \mu(\varpi_w) K_w) / K_w,
\]
then we may apply Proposition \ref{buildingcount} to the field $F_{1,w}$, the groups $\gG \times \cO_{1,w}$ and $\gH \times \cO_{1,w}$ with tori $\gT \times \cO_{1,w}$ and $\gT_H \times \cO_{1,w}$, and $y_{w'} \in \gG( \cO_{2,w'})$ to obtain
\be
\label{apply51}
\# \cL(v,\mu) \le \# \cL'(w, \mu) \ll \sum_{\alpha \in \Lambda_\mu} q_v^{ 2 \| \alpha \|_H^*}.
\ee
This completes the proof of Proposition \ref{returns}, once we show that the complexity of $\text{Ad}( \gH \times k_w)$ in $GL(\g_\cO \times k_w) \subset \A^{ (\dim \bG)^2 }$ in the sense of Section \ref{sec:complexity} is bounded uniformly in $v$, so that the implied constant in (\ref{apply51}) is also independent of $v$.  Here $\g_\cO$ denotes the Lie algebra of $\gG$, which is a projective $\cO[1/D]$-module.  To do this, recall that we have fixed a closed embedding of $\gG$ in $SL_N / \cO[1/D]$, and there is a globally defined adjoint map $\gG \to GL(\g_\cO)$ of group schemes over $\cO[1/D]$.  It follows that the adjoint maps on the fibers over $k_w$ have bounded complexity for all $w$, and because the complexity of $\gH \times k_w$ in $\gG \times k_w$ is also bounded, Lemma \ref{composecomplex} gives the desired complexity bound.

\subsection{Conclusion}
\label{sec3amp}

We now combine these ingredients to prove Theorem \ref{main}.  We recall the Maass form $\psi$ with spectral parameter $\lambda$, and the point $x \in \Omega_Y$ at which we want to bound $\psi$.  We let $k_\lambda = k_\lambda^0 * k_\lambda^0$ be as in Section \ref{sec32}.

We next define the finite part of our amplifier.  If $v \in \cP$ and $\mu \in X_*(\bT)$, we define $\tau(v,\mu) \in \cH_v$ to be the function supported on $K_v \mu(\varpi_v) K_v$ and taking the value $q_v^{-\| \mu \|^*}$ there.  It follows from Corollary \ref{Gy1} that $\tau(v, \mu)$ is approximately $L^2$-normalized.  For $\kappa \ge 0$ we define the truncated Hecke algebra $\cH_v^{\le \kappa}$ by
\[
\cH_v^{\le \kappa} = \text{span}_\C \{ \tau(v, \mu) : \| \mu \|^* \le \kappa \}.
\]
Let $\mu \in X_*(\bT)$ and $N > 0$ to be specified later.  Define $\cP_N = \{ v \in \cP : N/2 < q_v < N \}$.  Apply Proposition \ref{satake} to $\mu$ to obtain $\kappa, C > 0$ and $\tau_v = \tau_v^0 + \sigma_v$ for every $v \in \cP$ with the properties:

\begin{itemize}

\item $\tau_v(\psi) = 1$ for all $v \in \cP$.
\item $\tau_v$, $\tau_v^0$, $\sigma_v$, and $\tau_v \tau_v^*$ all lie in $\cH_v^{\le \kappa}$.
\item $\| \tau_v \|, \| \tau_v \tau_v^* \|_2 \ll 1$ and $\| \sigma_v \|_2 \ll q_v^{-1}$.

\item $\displaystyle \tau_v^0 = \sum_{1 \le |j| \le |W|} \sum_{ \| \lambda \|^* < C } c(v,j,\lambda) \tau(v, j \mu + \lambda)$ with $c(v,j,\lambda) \ll 1$.

\end{itemize}
Apply Proposition \ref{returns} with $X = N^{\kappa}$.  Let $\cQ$ be the set of places produced, and let $\cQ_N = \cQ \cap \cP_N$.   The finite part of our amplifier is then $\cT = \sum_{v \in \cQ_N} \tau_v$.

Applying the pre-trace inequality of Lemma \ref{ampineq} to $\psi$ and the test function $k_\lambda^0 \cT$, and using $| \widehat{k}^0_\lambda(-\lambda) | \ge 1$, gives

\[
| \cT(\psi) \psi(x)|^2 \le \sum_{\gamma \in \bG(F)} (\cT \cT^*)(x^{-1} \gamma x) k_\lambda(x^{-1} \gamma x).
\]
We break up the sum depending on whether $x^{-1} \gamma x$ is within $\delta$ of $K_{v_0}$, where $\delta > 0$ will be chosen later.  If $B$ and $B_\delta$ are as in Section \ref{Ksmallest}, we have

\begin{multline}
\label{nearfar}
| \cT(\psi) \psi(x)|^2 \le \sum_{\gamma \in \bG(F)} (\cT \cT^*)(x^{-1} \gamma x) k_\lambda(x^{-1} \gamma x) 1_{B_\delta}(x^{-1} \gamma x) \\
+ \sum_{\gamma \in \bG(F)} (\cT \cT^*)(x^{-1} \gamma x) k_\lambda(x^{-1} \gamma x) (1_B - 1_{B_\delta})(x^{-1} \gamma x).
\end{multline}
We bound the second sum by applying the decay in $k_\lambda$ from (\ref{kt}), and estimating the number of terms trivially.  Because $x \in \Omega$, we have

\bes
\# \{ \gamma \in \bG(F) : (x^{-1} \gamma x)_\infty \in B, (x^{-1} \gamma x)_f \in \text{supp}(\cT \cT^*) \} \ll N^{A_1}
\ees
for some $A_1 = A_1(\bG)$.  When combined with equation (\ref{kt}) this gives

\be
\label{farest}
\sum_{\gamma \in \bG(F)} (\cT \cT^*)(x^{-1} \gamma x) k_\lambda(x^{-1} \gamma x) (1_B - 1_{B_\delta})(x^{-1} \gamma x) \ll D(\lambda) (1 + \| \lambda_0 \|^\sigma \delta)^{-1/2} N^{A_1}.
\ee

We apply the trivial bound $| k_\lambda | \ll D(\lambda)$ to the first sum in (\ref{nearfar}), so that it suffices to estimate
\be
\label{Heckesum}
\sum_{\gamma \in \bG(F)} (\cT \cT^*)(x^{-1} \gamma x) 1_{B_\delta}(x^{-1} \gamma x).
\ee
This requires our bounds for Hecke returns, and in particular the following consequence of Proposition \ref{returns}.  We note that if $v, w \in \cQ_N$ and $\| \mu \|^*, \| \nu \|^* \le \kappa$, then $q_v^{\| \mu \|^*}, q_w^{\| \nu \|^*} \le X$.  Proposition \ref{returns} therefore gives $C, M > 0$ such that for such $v$, $w$, $\mu$, and $\nu$, and $\delta < C N^{-M}$, we have
\be
\label{returns2}
\# \cM(v,w,\mu,\nu) \ll \sum_{\alpha \in \Lambda_\mu} q_v^{ 2 \| \alpha \|_H^*} \sum_{\beta \in \Lambda_\nu} q_w^{ 2 \| \beta \|_H^*}.
\ee
We assume that $\delta < C N^{-M}$ from now on.  If we combine this with the quasi-splitness assumption in ($\mathsf{WS}$), we obtain the following.

\begin{cor}
\label{L2est}

Let $v, w \in \cQ_N$.  If $\omega_v \in \cH_v^{\le \kappa}$ and $\omega_w \in \cH_w^{\le \kappa}$ then

\[
\sum_{\gamma \in \bG(F)} \omega_v \omega_w(x^{-1} \gamma x) 1_{B_\delta}(x^{-1} \gamma x) \ll \| \omega_v \|_2 \| \omega_w \|_2.
\]

\end{cor}

\begin{proof}

If $\mu, \nu \in X_*(\bT)^{\le \kappa}$, Corollary \ref{realsmall} and the quasi-splitness of $\bG(F_{v_0})$ imply that we have $2 \| \alpha \|_H^* \le \| \alpha \|^* = \| \mu \|^*$ for all $\alpha \in \Lambda_\mu$, and likewise for $\beta \in \Lambda_\nu$.  Equation (\ref{returns2}) then implies that $\# \cM(v,w,\mu,\nu) \ll q_v^{\| \mu \|^*} q_w^{ \| \nu \|^*}$.

If we expand $\omega_v$ in terms of the basic Hecke operators $\tau(v,\mu)$, Corollary \ref{Gy1} implies that all the coefficients must be $\ll \| \omega_v \|_2$, and likewise for $\omega_w$.  The corollary now follows from $\# \cM(v,w,\mu,\nu) \ll q_v^{\| \mu \|^*} q_w^{ \| \nu \|^*}$.

\end{proof}

We next expand out $\cT \cT^*$ in (\ref{Heckesum}), and examine the contribution from the various terms.  The diagonal terms $\tau_v \tau_v^*$ can be estimated by combining the bound $\| \tau_v \tau_v^* \|_2 \ll 1$ with Corollary \ref{L2est}, and make a total contribution of $O(N)$.  Each off-diagonal term contains $\tau_v^0 (\tau_w^0)^*$, as well as three other terms involving $\sigma_v$ and $\sigma_w$.  Because $\| \sigma_v \|_2, \| \sigma_w \|_2 \ll q_v^{-1} \le 2/N$, the contribution from the terms containing a $\sigma$ is $O(N)$ by Corollary \ref{L2est}.

It remains to estimate the terms containing $\tau_v^0 (\tau_w^0)^*$ with $v \neq w$, which requires the second part of condition $(\mathsf{WS})$.  We substitute the formula
\[
\tau_v^0 = \sum_{1 \le |j| \le |W|} \sum_{ \| \lambda \|^* < C } c(j,\lambda) \tau(j \mu + \lambda),
\]
and likewise for $\tau_w^0$, and only consider the contributions from the two terms with $j = 1$ as the others may be treated in the same way.

We first consider the case where $\dim \bT_H < \dim \bT$, so that $W X_*(\bT_H) \subset X_*(\bT)$ is contained in a finite union of lower dimensional subspaces.  We may choose $\mu$ such that none of the cocharacters $\pm \mu + \lambda$ with $\| \lambda \|^* \le C$ lie in $W X_*(\bT_H)$.  This implies that when we use (\ref{returns2}) to estimate the contributions
\[
\sum_{\gamma \in \bG(F)} \tau(v,\mu + \lambda_1) \tau(w, \mu + \lambda_2)^*(x^{-1} \gamma x) 1_{B_\delta}(x^{-1} \gamma x)
\]
from these terms to (\ref{Heckesum}), the sums on the right hand side are empty and the contribution is zero.  Combining the contributions from the other terms in the expansion of $\cT \cT^*$ gives
\[
\sum_{\gamma \in \bG(F)} (\cT \cT^*)(x^{-1} \gamma x) 1_{B_\delta}(x^{-1} \gamma x) \ll N.
\]

The other case is where $\dim \bT_H = \dim \bT$, but there is $\nu \in X_*(\bT)$ such that
\be
\label{normineq}
\| \nu \|^* > 2 \; \underset{w \in W}{\max} \, \| w \nu \|^*_H.
\ee
Because $\| \cdot \|^*$ and $\| \cdot \|^*_H$ are piecewise linear, we may again choose $\mu$ such that the inequality (\ref{normineq}) holds for all $\pm \mu + \lambda$ with $\| \lambda \|^* \le C$.  Applying (\ref{returns2}), we see that there is $\eta > 0$ such that
\[
\sum_{\gamma \in \bG(F)} \tau(v,\mu + \lambda_1) \tau(w, \mu + \lambda_2)^*(x^{-1} \gamma x) 1_{B_\delta}(x^{-1} \gamma x) \ll N^{-\eta}.
\]
Summing over $v$ and $w$ and including the other terms in the expansion of $\cT \cT^*$ gives
\[
\sum_{\gamma \in \bG(F)} (\cT \cT^*)(x^{-1} \gamma x) 1_{B_\delta}(x^{-1} \gamma x) \ll N^{2-\eta}.
\]

In either case, we have
\[
\sum_{\gamma \in \bG(F)} (\cT \cT^*)(x^{-1} \gamma x) k_\lambda(x^{-1} \gamma x) 1_{B_\delta}(x^{-1} \gamma x) \ll D(\lambda) N^{2-\eta}.
\]
Combining this with (\ref{nearfar}) and (\ref{farest}) gives

\bes
| \cT(\psi) \psi(x)|^2 \ll D(\lambda) N^{2 - \eta} + D(\lambda) (1 + \| \lambda_0 \|^\sigma \delta)^{-1/2} N^{A_1}.
\ees
Applying $\tau_v(\psi) = 1$ and the bound $|\cQ_N| \gg_{\epsilon} N^{1 - \epsilon}$ gives

\be
\label{psiest}
|\psi(x)|^2 \ll_{\epsilon} D(\lambda) N^{- \eta + \epsilon} + D(\lambda) (1 + \| \lambda_0 \|^\sigma \delta)^{-1/2} N^{A_1}.
\ee
If we choose $\delta = \| \lambda_0 \|^{-\sigma/2}$ and $N \sim \| \lambda_0 \|^c$ for $c = c(\bG) > 0$ sufficiently small, then we have $\delta \le C N^{-M}$, and the inequality (\ref{psiest}) becomes $\psi(x) \ll D(\lambda)^{1/2} (1 + \| \lambda_0 \|)^{- \epsilon}$ for some $\epsilon = \epsilon(\bG, \sigma)$, which completes the proof.

\section{Diophantine approximation}
\label{sec5}

This section establishes the existence of the group $\bL$ used in $\mathsection$\ref{Ksmallest}.  It may be read independently from the rest of the paper, and draws on unpublished notes of Peter Sarnak and Akshay Venkatesh which we thank the authors for sharing with us.  We note that the main result of this section controls returns to any subvariety of $\bG$, not just subgroups.

\subsection{Notation}

Let $F$ be a number field.  Let $E$ be an extension of $F$, and fix infinite places $w_0 | v_0$ of $E$ and $F$ respectively.  Let $\bG/F$ be an affine algebraic group, and let $\bH \subset \bG$ be a subvariety defined over $E$.  We let $\rho : \bG \rightarrow SL_N$ be an $F$-embedding.  For $\gamma \in \bG(F)$, define $\| \gamma \|_f$ to be the LCM of the denominators of the norms of the entries of $\rho(\gamma)$.  We let $d( \cdot, \cdot)$ be the distance function on $\bG(F_{v_0})$ obtained from the standard Euclidean distance on $\A^{N^2}(F_{v_0})$.

\subsection{Main result and method of proof}

Fix a compact set $B_1 \subset \bG(F_{\infty})$, and let $H_{v_0} = \bH(E_{w_0}) \cap \bG(F_{v_0})$.  For $x \in \bG(F_{\infty})$, $T > 2$, and $\delta > 0$, we define

\bes
\cM(x, \delta, T) = \{ \gamma \in \bG(F) : \| \gamma \|_f \le T, \quad x^{-1} \gamma x \in B_1, \quad d(x^{-1} \gamma x, H_{v_0}) < \delta \}.
\ees
In other words, $\cM(x, \delta, T)$ is roughly the set of $\gamma \in \bG(F)$ that are within $\delta$ of $x H_{v_0} x^{-1}$, lie in $x B_1 x^{-1}$, and have denominators bounded by $T$.  Our main result is that, if $\delta$ is small, all such $\gamma$ lie in a variety $\bL / F$ that is stably conjugate to a subvariety of $\bH$.

\begin{prop}
\label{diophantine}

Let $B_2 \subset \bG(F_{\infty})$ be compact.  There exists $M = M(\bG, \bH) > 0$ and $C = C(B_1, B_2) > 0$ such that if $x \in B_2$ and $T > 2$, then there exist $Q \in \N$, an extension $E'/E$, and $y \in \bG( E')$ with the following properties.

\begin{enumerate}

\item $Q$ has $\ll_{B_1, B_2} \log T$ prime factors.

\item $|E' : E| \ll_{\bG, \bH} 1$, and $E'/F$ is Galois and unramified outside $Q$.

\item
\label{yintegral}
The denominators of $\rho(y)$ all divide $Q$.

\item If we define

\bes
\bL = \bigcap_{\sigma \in \textup{Gal} (E'/F)} (y \bH y^{-1})^\sigma,
\ees
then $\cM(x, \delta, T) \subset \bL(F)$ provided $\delta \le C T^{-M}$.

\end{enumerate}
The data $E'$, $Q$, and $y$ depend on $x$, $\delta$, and $T$.

\end{prop}

We may illustrate the proof of Proposition \ref{diophantine} using a toy example.  Suppose one has a collection of points in $\Z^2$ lying in a fixed ball.  One wants to show that if all the points lie close to a line, then they actually lie on a line.  One does this by considering a triangle formed by three of the points; the area must be small, by the near-collinearity, but it must also be a half-integer.  Therefore any three points are collinear, so they all must be.

We adapt this argument as follows.  For any subset $S \subset \cM(x, \delta, T)$, define the variety $\bX_S/E \subset \bG$ by $\bX_S = \{ g \in \bG : S \subset g \bH g^{-1} \}$, and let $\bX = \bX_{\cM(x, \delta, T)}$.  In terms of our toy model, one may think of $\bX_S$ as the set of lines containing the points in $S$.  Proving Proposition \ref{diophantine} is essentially the same as showing that $\bX \neq \emptyset$, because if $y \in \bX(\overline{F})$ then $\cM(x, \delta, T) \subset y \bH y^{-1}$, and we may descend $y \bH y^{-1}$ to $\bL$.

The main step in showing that $\bX \neq \emptyset$ is to prove that, if $S$ is finite, $\bX_S \neq \emptyset$ if $\delta$ is sufficiently small in terms of $T$ and $|S|$.  This is analogous to the argument with the area of a triangle.  The next step is to find $S$ of controlled size such that $\bX = \bX_S$, which is analogous to the fact that if any three points from our collection are collinear, they all must be.  The final step is to show that if one has a nonempty algebraic set of bounded complexity, then it must contain an algebraic point of bounded complexity, which gives the required control on $y$.

\subsection{Nonemptiness of $\bX_S$}

We now show that if $S \subset \cM(x, \delta, T)$ is finite, and $\delta$ is sufficiently small in terms of $T$ and $|S|$, then $\bX_S( \overline{E})$ is nonempty.

\begin{prop}
\label{approx}

Let $l > 0$.  There are $C = C(B_1, B_2, l) > 0$ and $M = M(\bG, \bH, l) > 0$ such that if $x \in B_2$, $\delta < C T^{-M}$, and $S \subset \cM(x, \delta, T)$ has $|S| = l$, then $\bX_S( \overline{E}) \neq \emptyset$.

\end{prop}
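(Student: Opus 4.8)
The plan is to combine a pigeonhole/effective-nullstellensatz argument with the archimedean approximation $d(x^{-1}\gamma x, B_H) < \delta$. The set $S$ has $l$ elements $\gamma_1,\dots,\gamma_l$, each with $\|\gamma_i\|_f \le T$, so each $\gamma_i$ has entries in $\tfrac1{Q_i}\Z$ with $Q_i \le T$; write $\gamma = (\gamma_1,\dots,\gamma_l)$ as a tuple of matrices of polynomially bounded height. The variety $X(S) \subset \bG$ is cut out by the equations expressing $\gamma_i \in g\bH g^{-1}$ for all $i$, i.e.\ $g^{-1}\gamma_i g \in \bH$ for all $i$, where $\bH$ is itself cut out by some fixed equations $f_1,\dots,f_m$ of bounded degree over $\cO$. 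So $X(S)$ is the zero set of a system of polynomials in $g$ whose coefficients have height bounded polynomially in $T$ (and whose number and degrees depend only on $\bG,\bH,l$). The hypothesis supplies an \emph{approximate} solution: for each $i$, the element $x x \cdots$ — more precisely $x \in B$, and since $d(x^{-1}\gamma_i x, B_H) < \delta$ there is some $b_i \in B_H$ with $d(x^{-1}\gamma_i x, b_i) < \delta$; taking $g = x$, the quantities $f_j(x^{-1}\gamma_i x)$ are within $O_{B,B_H}(\delta)$ of $f_j(b_i) = 0$, so $x$ is an $O(\delta)$-approximate zero of the defining system of $X(S)$.

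First I would make precise the statement ``an $O(\delta)$-approximate solution of a polynomial system of bounded complexity and height $\le T^{O(1)}$, with $\delta$ exponentially small in $\log T$, forces the existence of an exact solution.'' This is where an effective Nullstellensatz-type or quantitative-elimination input enters: one eliminates the variable $g$ to obtain, if $X(S) = \emptyset$, a Bézout-type identity $1 = \sum_j h_j f_j$ with $h_j$ of bounded degree and height $\le T^{O(1)}$, or equivalently one shows that the ``distance to the variety'' function is bounded below by an explicit negative power $T^{-M}$ of the height on the region where the $f_j$ are small and $g$ ranges over a fixed bounded set. Evaluating such an identity at $g = x$ and using $|f_j(x^{-1}\gamma_i x)| \ll \delta$ would give $1 \ll T^{O(1)}\delta$, contradicting $\delta < C T^{-M}$ once $M$ and $C$ are chosen correctly. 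Thus $X(S) \neq \emptyset$, with $M$ and $C$ depending only on $\bG, \bH, B, B_H, l$ as claimed. The uniformity in $l$ of the number and degree of the defining equations (they scale linearly in $l$) is what lets $M$ and $C$ depend on $l$ alone and not on which $\gamma_i$ are chosen.

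The main obstacle I expect is the quantitative algebraic-geometry step: passing from ``no exact solution'' to a \emph{height-effective} certificate of infeasibility, i.e.\ an arithmetic effective Nullstellensatz with control simultaneously on degrees and on the size of coefficients, and then converting that into a lower bound for $|f(x^{-1}\gamma_i x)|$ that is uniform over $x \in B$. One must be careful that the relevant algebraic set lives over $F$ (or a bounded extension), that clearing denominators introduces only a factor $T^{O(1)}$, and that the archimedean ``near $B_H$'' hypothesis really does produce points where \emph{all} the $f_j$ are simultaneously small — this uses that $\bH(\R)$ near $B_H$ is genuinely approximated, not just that $\gamma_i$ lies near a lower-dimensional stratum. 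Once this effective lower bound is in hand, choosing $M$ larger than the exponent appearing in the Nullstellensatz height bound and $C$ small enough finishes the argument; the remaining steps (bounding heights of the $\gamma_i$, controlling $f_j$ on a compact set, assembling constants) are routine and depend only on the fixed data and on $l$.
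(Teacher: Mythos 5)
Your strategy has the right shape (treat $g = x$ as a $\delta$-approximate solution of the polynomial system cutting out $X(S)$, get a height-dependent lower bound when there is no exact solution, and play the two off against each other), but it routes through a substantially harder input than the paper uses, and you correctly identify it: an \emph{arithmetic} effective Nullstellensatz controlling the heights of the Bézout certificate polynomially in the heights of the input polynomials $f_j(g^{-1}\gamma_i g)$, applied afresh for each $S$ since those polynomials carry the coefficients of the $\gamma_i$. The paper avoids this entirely by never applying the Nullstellensatz to $S$-dependent data. It instead works with the fixed map $\pi : \bH^l \times \bG \to \bG^l$, $\pi(h_1,\dots,h_l,g) = (gh_1g^{-1},\dots,gh_lg^{-1})$, uses Chevalley's theorem to write the image as a constructible set $C(l) = \bigcup_i V_{2i-1}\setminus V_{2i}$ with a fixed descending chain of closed subvarieties of $\bG^l$, and then, because $\pi^{-1}(V_{2i}) = \pi^{-1}(V_{2i+1})$, applies the ordinary (field-theoretic, non-effective) Nullstellensatz \emph{once} to get a fixed identity $(\pi^* f_{2i+1,j})^L = \sum_k p(i,j,k)\,\pi^* f_{2i,k}$ whose data depend only on $\bG,\bH,l$. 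All $T$-dependence then enters through two elementary devices: a Lipschitz bound for the fixed $f_{i,j}$ on a fixed compact set, and the fact that a fixed polynomial with bounded archimedean size, evaluated at an $l$-tuple $\ugm$ of rationals with $\|\gamma_i\|_f \le T$, is either zero or $\gg T^{-M_1}$. If $X(S) = \emptyset$ then $\ugm \notin C(l)$, so $\ugm \in V_{2r}\setminus V_{2r+1}$ for some $r$; one gets $|f_{2r+1,s}(\ugm)| \gg T^{-M_1}$, transfers to the nearby $\uge \in \pi(B_H^l \times x)$ by Lipschitz, pushes through the fixed Nullstellensatz identity to force some $|f_{2r,t}(\uge)| \gg T^{-LM_1}$, and transfers back to contradict $\ugm \in V_{2r}$ once $\delta \ll T^{-LM_1}$. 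So where your proposal needs a height-effective Bézout theorem, the paper only needs the trivial lower bound on nonzero values of a fixed polynomial at bounded-denominator rational points, at the cost of the extra structural step of Chevalley applied to $\pi$. Your route is defensible but imports genuine machinery (Philippon, Berenstein--Yger, Sombra, etc.); the paper's route is self-contained and more elementary.
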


\begin{proof}

Let $\pi: \bH^l \times \bG \rightarrow \bG^l$ be the map over $E$ given by

\bes
\pi(h_1, \ldots, h_l, g ) = ( g h_1 g^{-1}, \ldots, g h_l g^{-1}).
\ees
Let $S = \{ \gamma_1, \ldots, \gamma_l \}$, and define $z = (\gamma_1, \ldots, \gamma_l)$.  It may be seen that $\bX_S$ is the projection of $\pi^{-1}(z)$ to $\bG$, and so it suffices to show that $\pi^{-1}(z) \neq \emptyset$.

To explain why this should be true, suppose that the image of $\pi$ is a closed subvariety $\bI \subset \bG^l$.  In this case, we know that $z$ is a rational point of $\bG^l$ of bounded height, and the assumption that $\gamma_i \in \cM(x, \delta, T)$ implies that $z$ is close to $\bI(E_{w_0})$.  It is then easy to show that $z \in \bI(E)$ as required.

If $\bI$ is a constructible set rather than a subvariety, then this argument does not work - there may be rational points that lie in the closure of $\bI(E_{w_0})$ but not in $\bI(E)$.  However, if we assume that $z$ is close not just to $\bI(E_{w_0})$, but to $\pi(K_X)$ for some compact $K_X \subset \bX(E_{w_0})$, then the principle still holds, and is stated precisely as Proposition \ref{abstractapprox}.

We wish to apply Proposition \ref{abstractapprox} with $\bX = \bH^l \times \bG$, $\bY = \bG^l$, and $f = \pi$.  We give $\bX$ and $\bY$ the standard affine embeddings coming from $\rho$, and let $d_Y$ be the distance function on $\bY(E_{w_0})$ obtained from this embedding.  We choose $K_Y = (B_2 B_1 B_2^{-1})^l$, so that $K_Y \subset \bY(F_\infty) \subset \bY(E_\infty)$, and we have $z \in K_Y \cap \bY(E)$.

We next choose a compact set $K_X \subset \bX(E_{w_0})$ such that $d_Y( f(K_X), z) < C_1 \delta$ for some $C_1 = C_1(B_1, B_2) > 0$.  Let $B_{1,v_0}$ and $B_{2,v_0}$ denote the projection of $B_1$ and $B_2$ to $\bG(F_{v_0})$, and let $B_{3,v_0} \subset \bG(F_{v_0})$ denote the set of points within distance 1 of $B_{1,v_0}$.  We define $K_X = B_{3,v_0}^l \times B_{2,v_0} \subset \bX(E_{w_0})$.

To show that $K_X$ has the required property, we know that there are $h_i \in H_{v_0}$ such that $d(x^{-1} \gamma_i x, h_i) < \delta$ for all $i$.  We have assumed that $x^{-1} \gamma_i x \in B_1$ for all $i$, and that $x \in B_2$, which implies that $\widetilde{x} = (h_1, \ldots, h_l, x) \in K_X$.  Because $\gamma_i$, $x$, and $h_i$ lie in compact sets depending only on $B_1$ and $B_2$ we have $d(\gamma_i, x h_i x^{-1}) < C_1(B_1, B_2) \delta$ for all $i$, so that $d_Y( f(\widetilde{x}), z) < C_1 \delta$ as required.

Applying Proposition \ref{abstractapprox} to these data gives $C, M > 0$ such that if $\delta < C T^{-M}$ then $z \in \pi( \bX( \overline{E}) )$, so that $\pi^{-1}(z) \neq \emptyset$ as required.

\end{proof}

If $z \in \A^n(E)$, we define $\| z \|_f$ to be the LCM of the denominators of the norms of the coordinates of $z$.

\begin{prop}
\label{abstractapprox}

Let $\bX \subset \A^n$ and $\bY \subset \A^m$ be affine varieties defined over $E$, and let $f : \bX \to \bY$ be a map defined over $E$.  Let $w_0$ be an infinite place of $E$, and let $d$ be the standard distance function on $\A^m(E_{w_0})$.  Let $K_X \subset \bX(E_{w_0})$ and $K_Y \subset \bY(E_\infty)$ be compact.  There are $C, M > 0$ such that if $z \in \bY(E) \cap K_Y$ satisfies $\| z \|_f < T$, $d(z, f(K_X)) < \delta$, and $\delta < C T^{-M}$, then $z \in f( \bX(\overline{E}) )$.

\end{prop}

\begin{proof}

Let $I = f( \bX(\overline{E}) )$.  Chevalley's theorem implies that $I$ is a constructible subset of $\bY(\overline{E})$ defined over $E$.  This means that there is a finite decreasing chain of subvarieties $\bY = \bV_0 \supseteq \bV_1 \ldots \supseteq \bV_{2a} \supseteq \bV_{2a+1} = \emptyset$ of $\bY$ defined over $E$ and such that 

\bes
I = \bigcup_{i=1}^a \bV_{2i-1}(\overline{E}) \setminus \bV_{2i}(\overline{E}),
\ees
where $\bV_{2a}$ may be empty.

Let $\cO_X$ and $\cO_Y$ be the $E$-algebras of functions on $\bX$ and $\bY$.  For each $0 \le i \le 2a+1$, we let $\{ p_{i,j} \in \cO_Y : 1 \le j \le D(i) \}$ be a finite collection of equations defining $\bV_i$.  We have $f^{-1}(\bV_{2i}(\overline{E}) ) = f^{-1}(\bV_{2i+1}(\overline{E}) )$ for all $0 \le i \le a$, and so if we define the ideals $N_i = \langle f^* p_{i,j} : 1 \le j \le D(i) \rangle \subset \cO_X \otimes \overline{E}$ we see that $f^{-1}(\bV_{2i}(\overline{E}) )$ is the vanishing set of both $N_{2i}$ and $N_{2i+1}$.  The Nullstellensatz implies that the radicals of $N_{2i}$ and $N_{2i+1}$ in $\cO_X \otimes \overline{E}$ must be equal, and so there is $L = L(\bG, \bH, l) \in \N$ and elements

\bes
\{ c(i,j,k) \in \cO_X \otimes \overline{E} : 0 \le i \le a, 1 \le j \le D(2i+1), 1 \le k \le D(2i) \}
\ees
such that

\be
\label{null}
(f^* p_{2i+1,j})^L = \sum_{k=1}^{D(2i)}  c(i,j,k) f^* p_{2i,k}.
\ee

If $z \notin I$, there must be some $0 \le r \le a$ such that $z \in \bV_{2r}(\overline{E})$ but $z \notin \bV_{2r+1}(\overline{E})$.  As $z \notin \bV_{2r+1}(\overline{E})$, there must be some $s$ such that $p_{2r+1, s}(z) \neq 0$.  We have $p_{2r+1,s}(z) \in E$.  Our assumption that $z \in K_Y$ implies that $|p_{2r+1, s}(z)|_w \ll 1$ for all infinite places $w$.  Our assumption on $\| z \|_f$ implies that the denominator of $p_{2r+1, s}(z)$ is bounded by a power of $T$, and combining this with $|p_{2r+1, s}(z)|_w \ll 1$ implies that there exist $C_1$ and $M_1$ such that

\be
\label{w0lowerbd}
|p_{2r+1, s}(z)|_{w_0} \ge 2C_1 T^{-M_1}.
\ee

We now use (\ref{null}) to show that $z \notin \bV_{2r}(\overline{E})$.  There is $\kappa > 0$ such that

\be
\label{lip}
|p_{i,j}(y) - p_{i,j}(y')| \le \kappa d(y,y')
\ee
for all $i$ and $j$, and all $y, y' \in f(K_X) \cup K_{Y,w_0}$ where $K_{Y,w_0}$ denotes the projection of $K_Y$ to $\bY(E_{w_0})$.

By assumption, there is $x \in K_X$ with $d( f(x), z) \le \delta$, and applying (\ref{lip}) to $f(x)$ and $z$ gives $|p_{2r+1, s}( f(x) )| \ge 2C_1 T^{-M_1} - C_2 \kappa \delta$.  By decreasing $C$ and increasing $M$ if necessary, we may assume that

\bes
|f^* p_{2r+1, s}( x)| = |p_{2r+1, s}(f(x))| \ge C_1 T^{-M_1}.
\ees
Combining this with (\ref{null}) gives

\bes
\left| \sum_{k=1}^{D(2i)}  c(r,s,k)( x) f^* p_{2r,k}( x) \right| \ge C_1^L T^{-LM_1}.
\ees
If we let $C_3$ be an upper bound for all $|c(i,j,k)|$ on $K_X$, there must be some $k$ such that

\bes
|p_{2r,k}( f(x))| \ge \frac{C_1^L T^{-LM_1}}{D(2i) C_3}.
\ees
Combined with (\ref{lip}) this gives

\bes
|p_{2r, k}(z)| \ge \frac{C_1^L T^{-LM_1}}{D(2i) C_3} - C_2 \kappa \delta.
\ees
By shrinking $C$ and increasing $M$ if necessary, this implies that $p_{2r, k}(z) \neq 0$, which contradicts $z \in \bV_{2i}(\overline{E})$.

\end{proof}

\subsection{Proof of Proposition \ref{diophantine}}

We now find $S$ of controlled size such that $\bX = \bX_S$, to which we apply Proposition \ref{approx}.  The fact that some finite $S$ with this property exists is immediate by Noetherianness, but in order for it to be useful we need $|S|$ to be bounded only in terms of $\bG$ and $\bH$.

\begin{lemma}
\label{reduce}

There exists an integer $n = n(\bG, \bH)$ and $S \subset \cM(x, \delta, T)$ with $|S| \le n$ such that $\bX_S = \bX$

\end{lemma}

\begin{proof}

Let $\cO_G$ be the $F$-algebra of functions on $\bG$.  For $d \ge 0$, let $\cO_G^d$ be the subspace of $\cO_G$ consisting of the restrictions of degree $d$ polynomials on $\A^{N^2}$ under the natural embedding $\bG \to SL_N \to \A^{N^2}$.  $\bX$ is cut out by the conditions $\{ y \in g \bH g^{-1} : y \in \cM(x, \delta, T) \}$, which are expressed as functions in $\cO_G^d$ for $d = d(\bG, \bH)$ sufficiently large.  Let $V \subset \cO_G^d$ be the $E$-vector subspace spanned by these functions.  It has dimension at most $\dim \cO_G^d$, and so there exists $S \subset \cM(x, \delta, T)$ with $|S| \le \dim \cO_G^d$ such that $\bX = \bX_S$.

\end{proof}

Let $n$ be the integer provided by Lemma \ref{reduce}, and apply Proposition \ref{approx} for every $l$ between 1 and $n$.  This gives $C = C(B_1, B_2) > 0$ and $M = M(\bG, \bH) > 0$ such that $\bX(\overline{E}) \neq \emptyset$ if $x \in B$ and $\delta < C T^{-M}$.  It remains to show that we can find $y \in \bX(\overline{E})$ of bounded complexity.

We shall do this using Lemma \ref{cxbound} below, which states that an algebraic set in $\overline{\Q}^n$ cut out by polynomials of low complexity contains an algebraic point of low complexity.  We feel that such a result should be standard, but as we are unable to find a suitable version in the literature, we shall give a proof.

Before stating the Lemma, let us specify what we mean by ``low complexity''.  Let $K$ be a number field with ring of integers $\cO_K$.  We say that a (multivariable) polynomial $f$ with coefficients in $\cO_K$ has complexity $(D,X)$ if it has degree at most $D$, and all of its coefficients have absolute value at most $X$ under all archimedean norms on $K$.  We shall say that an algebraic number $y$ has complexity $(D,X)$ over $K$ if it is a root of a polynomial over $\cO_K$ of complexity $(D,X)$.  We say that a polynomial has complexity $\lesssim_* (D,X)$ (where $*$ is some additional data) if it has complexity $(D', C_1 X^{C_2})$ for $D'$, $C_1$, and $C_2$ depending on $D$ and $*$, and likewise for algebraic numbers.

\begin{lemma}
\label{cxbound}

Let $K$ be a number field.  Let $f_1, \ldots, f_k \in \cO_K[x_1, \ldots, x_n]$ be a set of polynomials of complexity $(D,X)$.  Let $Z$ be the common zero locus of the $f_i$ in $\overline{\Q}^n$.  If $Z$ is nonempty, there exists $y \in Z$ with complexity $\lesssim_{k,n} (D,X)$ over $K$.

\end{lemma}

\begin{proof}

We perform induction on $n$.  Let $\pi : \overline{\Q}^n \to \overline{\Q}$ be the projection to the first co-ordinate.  We first show that it suffices to find $y_1 \in \pi(Z)$ with complexity $\lesssim_{k,n} (D,X)$ over $K$.  Suppose such a $y_1$ exists, and let $Z' = \pi^{-1}(y_1) \subset \overline{\Q}^{n-1}$.  We wish to show that $Z'$ has a point of low complexity.  Let $K'$ be the number field generated by $y_1$ over $K$, and define $f_i'(x_2, \ldots, x_n) = f_i(y_1, x_2, \ldots, x_n) \in K'[x_2, \ldots, x_n]$ so that $Z'$ is the zero locus of the $f_i'$.  By clearing denominators we may assume that $f_i' \in \cO_K'[x_2, \ldots, x_n]$, and that the $f_i$ have complexity $\lesssim_{k,n} (D,X)$ over $K'$.  Our inductive hypothesis implies that there is a point $y' \in Z'$ with complexity $\lesssim_{k,n} (D,X)$ over $K'$.  By taking the product of the polynomials defining the coordinates of $y'$ over their embeddings above $K$, one sees that $(y_1, y') \in Z$ has complexity $\lesssim_{k,n} (D,X)$ over $K$ as required.

We now produce $y_1 \in \pi(Z)$ with complexity $\lesssim_{k,n} (D,X)$.  We do this by proving an effective version of Chevalley's theorem for $\pi(Z)$.  Let $y_1 \in \overline{\Q}$, and define $f_i' \in \overline{\Q}[x_2, \ldots, x_n]$ as before.  For $d \ge 0$, let $V_d \subset \overline{\Q}[x_2, \ldots, x_n]$ be the set of polynomials of degree $\le d$.  By the effective Nullstellensatz \cite{Ko}, there exists $d = d(D,k,n)$ such that $\pi^{-1}(y_1) = \emptyset$ if and only if there exist $g_i \in V_d$ such that $\sum f'_i g_i = 1$.  In other words, one has a $\overline{\Q}$-linear map

\bes
A : V_d^k \to V_{d+D}, \quad A(g_1, \ldots, g_k) = \sum f_i' g_i,
\ees
and one has $\pi^{-1}(y_1) = \emptyset$ if and only if the equation $A{\bf x} = {\bf 1}$ has a solution, where ${\bf 1}$ is the vector corresponding to $1 \in V_{d+D}$.

If we write $A$ as a matrix with respect to the monomial basis of $V_d^k$ and $V_{d+D}$, we see that the entries are of the form $p_{ij}(y_1)$, where $p_{ij} \in \cO_K[x_1]$ have complexity $\lesssim_{k,n} (D,X)$.  We shall therefore identify $A$ with this matrix of polynomials.

By performing row and column operations on $A$ with coefficients in $\cO_K[x_1]$, we may reduce the system $(A:{\bf 1})$ to $(B:{\bf v})$, where $B$ is a diagonal matrix and ${\bf v}$ is a vector with entries in $\cO_K[x_1]$.  Moreover, all entries of $B$ and ${\bf v}$ have complexity $\lesssim_{k,n} (D,X)$.  The system $A(y_1) {\bf x} = {\bf 1}$ has a solution with ${\bf x} \in V_d^k$ if and only if $B(y_1) {\bf x} = {\bf v}(y_1)$ does, except possibly when $y_1$ lies in the finite set $W_1$ consisting of the roots of the polynomials in $\cO_K[x_1]$ that we multiplied by during the reduction.

Let $W_2$ be the set of $y_1$ where $B(y_1) {\bf x} = {\bf v}(y_1)$ has a solution, so that $W_2 - W_1 \subset \pi(Z) \subset W_1 \cup W_2$.  $W_2$ is clearly constructable, and is either a finite set of points with complexity $\lesssim_{k,n} (D,X)$, or the complement of such a set.  In the first case, we see that $\pi(Z)$ is a finite set of points of complexity $\lesssim_{k,n} (D,X)$, and we are done.  In the second, the number of points in $\overline{\Q} - (W_2 - W_1)$ must be bounded in terms of $k$, $n$, and $D$.  It follows that there exists $y_1 \in \Z \cap \pi(Z)$ with size bounded in terms of these data, which completes the proof.

\end{proof}

We now finish the proof using Lemma \ref{cxbound}.  By Lemma \ref{reduce}, $\bX$ is cut out by a set of polynomials over $E$ whose cardinality and degrees are bounded in terms of $\bG$ and $\bH$.  In addition, our assumptions that $x \in B_2$, $x^{-1} \gamma x \in B_1$, and $\| \gamma \|_f \le T$ imply that the heights of the coefficients of these polynomials are bounded by $C' T^{M'}$ for some $C' = C'(B_1, B_2)$ and $M' = M'(\bG, \bH)$.

We may therefore apply Lemma \ref{cxbound} to produce a point $y \in \bX(\overline{E})$ of complexity $(D,X)$, where $D$ is bounded in terms of $\bG$ and $\bH$, and $X \le C_1 T^{M_1}$ for $C_1 = C_1(B_1, B_2)$ and $M_1 = M_1(\bG, \bH)$.  If $E'$ is the extension over which $y$ is defined, $|E':F|$ is bounded in terms of $\bG$ and $\bH$.  Moreover, there is $Q$ with $\ll_{B_1, B_2} \log T$ prime factors such that $E'/F$ is unramified away from $Q$, and the denominators of $\rho(y)$ all divide $Q$.  We may also assume that $E'/F$ is Galois.  The definition of $\bX$ then gives

\bes
\cM(x, T, \delta) \subset y \bH y^{-1}(E'),
\ees
and because $\cM(x, T, \delta) \subset \bG(F)$ we may descend $y \bH y^{-1}$ to the variety $\bL / F$.

\section{Estimating intersections in buildings}
\label{sec6}

This section contains the proof of inequality (\ref{apply51}), used in the proof of Proposition \ref{returns} bounding Hecke returns.

\subsection{Notation and statement of main result}

Let $K$ be a $p$-adic field with ring of integers $\cO$ and residue field $k$.  Let $\varpi$ be a uniformizer, and let $q$ be the order of $k$.  Let $\widetilde{K}$ be the maximal unramified extension of $K$, with integers $\widetilde{\cO}$ and residue field $\widetilde{k}$.  Let $\Gamma$ be the Galois group of $\tK / K$, which is canonically identified with the Galois group of $\tk / k$.

We shall work with affine group schemes over $\cO$ in this section, which we denote by upper case Gothic letters.  Any unfamiliar terminology regarding these objects will be defined in Sections \ref{sec:schemes1} and \ref{sec:schemes2}.  If $\gX$ is an affine group scheme over $\cO$, we denote its special and generic fibers by $\gX_k$ and $\gX_K$.  Let $\gG$ and $\gH$ be two smooth connected affine reductive group schemes over $\cO$.  We assume that $\gH$ is embedded as a closed subgroup scheme of $\gG$, and that $\gG_K$ and $\gH_K$ are both split.  We also assume that there are smooth closed subgroup schemes $\gT < \gG$ and $\gT_H < \gH$ such that $\gT_H < \gT$, and $\gT_K$ and $\gT_{H,K}$ are maximal split tori in $\gG_K$ and $\gH_K$\footnote{Note that the assumptions we have made on $\gG$ and $\gH$ imply that such $\gT$ and $\gT_H$ exist, by an argument we recall at the end of Section \ref{sec:schemes2}.  However, we wish to work with the tori given to us by the global setup of Section \ref{sec3}. \label{foot:tori}}.

Let $W$ be the Weyl group of $\gT$ in $\gG$, and let $\| \cdot \|_H^*$ be the seminorm on $X_*(\gT_H)$ defined in Section \ref{alggps}.  Let $y \in \gG(\tO)$, and define

\bes
L = \bigcap_{\sigma \in \Gamma} (y \gH_K y^{-1})^\sigma,
\ees
which is a subgroup of $\gG_K$ defined over $K$.  We define

\bes
\cL(\mu) = \# (L(K) \gG(\cO) \cap \gG(\cO) \mu(\varpi) \gG(\cO)) / \gG(\cO)
\ees
for $\mu \in X_*(\gT)$.  This can be thought of as the size of the intersection of $L(K)$ with a sphere in the building of $\gG_K$.  The main result of this section is a bound for this size.  

\begin{prop}
\label{buildingcount}

Let $\mu \in X_*(\gT)$, and define $\Lambda_\mu = X_*(\gT_H) \cap W\mu$.  We have $\cL(\mu) \ll \sum_{\lambda \in \Lambda_\mu} q^{ 2\| \lambda \|_H^*}$, where the implied constant depends only on the complexity of the image of $\gH_k$ in $\A_k^{(\dim \gG)^2}$ under the adjoint mapping $\gG_k \to GL( \textup{Lie}(\gG_k) ) \subset \A_k^{(\dim \gG)^2}$.  Moreover, this dependence is ineffective.

\end{prop}

See Section \ref{sec:complexity} for the definition of complexity used in the proposition.  We emphasize that the implied constant does not depend on $y$, $K$, or even the characteristic of $k$.  In the case $y = 1$ (or equivalently, $y \in \gH(\tO)$), we have an explicit formula for $\cL(\mu)$.

\begin{prop}
\label{y1}

Let $\mu \in X_*(\gT)$, and $\Lambda_\mu = X_*(\gT_H) \cap W\mu$.  For any $\lambda \in X_*(\gT_H)$, let $Q_{H,\lambda}$ denote the corresponding parabolic subgroup of $\gH_k$.  $\Lambda_\mu$ is invariant under $W_H$, and we have
\[
\# (\gH(K) \gG(\cO) \cap \gG(\cO) \mu(\varpi) \gG(\cO)) / \gG(\cO) = \sum_{\lambda \in \Lambda_\mu / W_H} q^{ 2 \| \lambda \|^*_H} \frac{ \# \gH(k) / q^{\textup{dim} \gH} }{ \# Q_{H,\lambda}(k) / q^{ \textup{dim} Q_{H,\lambda}} }.
\]

\end{prop}

Specializing to $\gH = \gG$, we obtain the following classic result.

\begin{cor}
\label{Gy1}

Let $\lambda \in X_*(\gT)$, and let $Q_\lambda$ be the parabolic subgroup of $\gG_k$ associated to $\lambda$.  We have
\[
\# \gG(\cO) \lambda(\varpi) \gG(\cO) / \gG(\cO) = q^{ 2 \| \lambda \|^*} \frac{ \# \gG(k) / q^{\textup{dim} \gG} }{ \# Q_\lambda(k) / q^{ \textup{dim} Q_\lambda} }.
\]

\end{cor}

\subsection{Complexity of algebraic sets}
\label{sec:complexity}

We shall use results of Breuillard, Green, and Tao \cite[Sec. 3]{BGT} on the complexity of algebraic sets, which we now recall.  The first three results below define the complexity of a variety, and of a map between varieties, and say that the image of a variety of bounded complexity by a map of bounded complexity is a constructible set of bounded complexity.

\begin{definition}[\cite{BGT}, Definition 3.1]
\label{setcomplex}

Let $M \ge 1$ be an integer.

\begin{enumerate}[(i)]

\item An affine variety over $\tk$ of complexity at most $M$ in $\A^n$ is a subset $V \subset \A^n(\tk)$ of the form
\[
V = \{ x \in \A^n(\tk) : P_1(x) = \ldots = P_m(x) = 0 \}
\]
where $0 \le n, m \le M$, and $P_1, \ldots, P_m : \A^n(\tk) \to \tk$ are polynomials of degree at most $M$.

\item  A projective variety over $\tk$ of complexity at most $M$ in $\bbP^n$ is a subset $V \subset \bbP^n(\tk)$ of the form
\[
V = \{ x \in \bbP^n(\tk) : P_1(x) = \ldots = P_m(x) = 0 \}
\]
where $0 \le n, m \le M$, and $P_1, \ldots, P_m : \tk^{n+1} \to \tk$ are homogeneous polynomials of degree at most $M$.

\item A quasiprojective variety over $\tk$ of complexity at most $M$ in $\bbP^n$ is a set of the form $V \setminus W \subset \bbP^n(\tk)$, where $V, W$ are projective varieties of
complexity at most $M$.

\item A constructible set over $\tk$ of complexity at most $M$ in $\bbP^n$ is a boolean combination of at most $M$ projective varieties of complexity at most $M$.

\end{enumerate}

\end{definition}

As in \cite{BGT}, we may consider both affine and projective varieties as quasiprojective varieties, and abbreviate quasiprojective variety to variety.

\begin{definition}[\cite{BGT}, Definition 3.3]
\label{mapcomplex}

Let $V \subset \bbP^n(\tk)$ and $W \subset \bbP^m(\tk)$ be varieties, and let $M \ge 1$.  A map $f : V \to W$ is said to be regular with complexity at most $M$ if $V, W$ are individually of complexity at most $M$ in $\bbP^n$ and $\bbP^m$, and if one can cover $V$ by some varieties $V_1, \ldots , V_r$ of complexity at most $M$ for some $r \le M$ such that

\begin{enumerate}[(i)]

\item for each $1 \le j \le r$, $f(V_j)$ is contained in an affine space $\A^m_{i_j} (\tk) \subset \bbP^m(\tk)$;
\item the map $f |{V_j}$ has the form $(P_{j,1}/Q_{j,1}, \ldots , P_{j,m}/Q_{j,m})$, where the $P_{j,l}, Q_{j,l}$ are homogeneous polynomial maps from $\tk^{n+1}$ to $\tk$ with $\textup{deg}(P_{j,l}) = \textup{deg}(Q_{j,l}) \le M$, and the $Q_{j,l}$ are non-vanishing on $V_j$.

\end{enumerate}

\end{definition}

\begin{lemma}[\cite{BGT}, Lemma 3.4]
\label{composecomplex}


Let $V$ and $W$ be varieties of complexity at most $M$ in $\bbP^n$ and $\bbP^m$ respectively, and let $f : V \to W$ be regular of complexity at most $M$.  The image $f(V)$ is a constructible set of complexity $O_M(1)$ in $\bbP^m$. In particular, by \cite[Lemma 3.2]{BGT}, the Zariski closure of this image is a variety of complexity $O_M(1)$ in $\bbP^m$.

\end{lemma}

These notions of complexity will be used together with the following bound on rational points.

\begin{lemma}
\label{pointcount}

If $V$ is a projective variety over $\tk$ of complexity at most $M$ and dimension $d$, then $\# V(k) \ll_M q^d$.

\end{lemma}

\begin{proof}

Lemma A.4 of \cite{BGT} allows us to assume that $V$ is irreducible, and \cite[Lemma 3.5]{BGT} implies that its degree is bounded in terms of $M$.  The lemma now follows from \cite[Lemma 1]{LW}; note that the proof of that lemma assumes that $V$ is defined over $k$, but this is not needed.

\end{proof}

The constants in Lemmas \ref{composecomplex} and \ref{pointcount} are ineffective (although this could probably be overcome with more work), which is the source of the ineffectiveness of Theorem \ref{buildingcount}.  

\subsection{Background on group schemes}
\label{sec:schemes1}

Let $\gX$ be an affine group scheme over $\cO$.  We denote the $\cO$-algebra of $\gX$ by $\cO[\gX]$.  We denote the generic and special fibers of $\gX$ by $\gX_K$ and $\gX_k$ respectively.  We say that $\gX$ is connected if both its generic and special fibers are.

If $n \ge 1$, we define the $n$th principal congruence subgroup of $\gX(\tO)$ to be $X_n = \ker \gX(\tO) \to \gX(\tO / \p^n)$, and set $X_0 = \gX(\tO)$.  If $\gY$ is another affine group scheme with a morphism $f : \gY \to \gX$, the commutative diagram
\[
\begin{CD}
\gY(\tO)     @>>>  \gX(\tO)\\
@VVV        @VVV\\
\gY(\tO / \p^n)   @>>>  \gX(\tO / \p^n)
\end{CD}
\]
implies that $f(Y_n) \subset X_n$ for all $n$.  Applying this to the product and inverse maps on $\gX$, we see that $X_n$ is a group.

$\gX$ is called flat if $\cO[\gX]$ is flat (equivalently, free) as an $\cO$ module, and smooth if it is flat and $\gX_K$ and $\gX_k$ are smooth.  We shall use two consequences of smoothness in this section.  First, if $\gX$ is smooth then for any $n$ the reduction map $\gX( \tO) \to \gX( \tO / \p^n)$ is surjective, as $\tO$ is Henselian.  Secondly, smoothness of $\gX$ implies that its Lie algebra $\mathfrak{x}$ behaves well under passage to the fibers.  In particular, smoothness implies that $\mathfrak{x}$ is a free $\cO$ module, and we have $\text{Lie}(\gX_K) = \mathfrak{x} \otimes_\cO K$ and $\text{Lie}(\gX_k) = \mathfrak{x} \otimes_\cO k$.  We define $\overline{\mathfrak{x}} = \mathfrak{x} \otimes_\cO \widetilde{k}$.

If $\gX$ is smooth, then we have $X_0 / X_1 \simeq \gX_k(\widetilde{k})$, and the following lemma gives a similar statement for $X_n / X_{n+1}$.

\begin{lemma}

If $\gX$ is smooth, then $X_n / X_{n+1} \simeq \overline{\mathfrak{x}}$ for any $n \ge 1$.

\end{lemma}

\begin{proof}

We define $\kappa_n = \text{ker}( \gX( \tO / \p^{n+1}) \to \gX( \tO / \p^n) )$.  We first use smoothness to show that $X_n / X_{n+1} \simeq \kappa_n$, and then show that a general affine group scheme satisfies $\kappa_n \simeq \overline{\mathfrak{x}}$.  To prove the first claim, consider the sequence
\[
X_{n+1} \to X_n \to \gX( \tO / \p^{n+1}) \to \gX( \tO / \p^n).
\]
The sequence is exact at $X_n$ by definition, and exact at $\gX( \tO / \p^{n+1})$ because of the Henselian property of $\gX$.  This implies the claim.

If we denote the identity in $\gX(\cO)$ by $e$, then $\kappa_n$ is the set of $\cO$ algebra homomorphisms $\cO[\gX] \to \tO / \p^{n+1}$ whose reduction mod $\p^n$ is the same as $e$.  If we let $\text{Hom}_\cO( \cO[\gX], \tO / \p^{n+1} )_e \subset \text{Hom}_\cO( \cO[\gX], \tO / \p^{n+1} )$ be the $\cO$ module homomorphisms that reduce to $e$ mod $\p^n$, then there is an isomorphism

\begin{align*}
\text{Hom}_k( \cO[\gX], \widetilde{k}) & \simeq \text{Hom}_\cO( \cO[\gX], \tO / \p^{n+1} )_e, \\
d & \mapsto e + \varpi^n d.
\end{align*}
Moreover, it may be checked that $e + \varpi^n d$ is an algebra homomorphism if and only if $d$ is a derivation of $e: \cO[\gX] \to \widetilde{k}$, so that we have an isomorphism $\text{ker}( \gX( \tO / \p^{n+1}) \to \gX( \tO / \p^n) ) \simeq \overline{ \mathfrak{x}}$ as required.

\end{proof}

Let $\gY$ be an affine group scheme, and let $\iota : \gY \to \gX$ be a closed embedding.  We recall that this means that the corresponding map $\iota^* : \cO[ \gX] \to \cO[ \gY]$ is surjective.  This implies that if $R \subset R'$ are two $\cO$ algebras, then $\iota : \gY(R) \to \gX(R)$ is injective and $\iota(\gY(R)) = \iota(\gY(R')) \cap \gX(R)$.

We define the standard multiplicative and additive group over $\cO$ to be the usual integral models of $\mathbb{G}_m$ and $\mathbb{G}_a$ with coordinate rings $\cO[ X, X^{-1}]$ and $\cO[X]$ respectively.  A standard torus is a product of standard multiplicative groups.  If $\gT$ is a standard torus, then the group of integral cocharacters $X_*(\gT)$ is identified with $X_*(\gT_K)$ and $X_*(\gT_k)$ under passage to the fibers.

\subsection{Background on reductive group schemes}
\label{sec:schemes2}

Let $\gG$ be a smooth affine group scheme over $\cO$.  We say that $\gG$ is reductive if both its fibers are.  We say that a subgroup scheme $\gT < \gG$ is a split maximal torus if it is closed in $\gG$, isomorphic to a standard torus over $\cO$, and is a maximal torus in one (equivalently, both) of the fibers of $\gG$.  It follows from e.g. \cite[Prop. 2.1.2]{Co} that any two split maximal tori $\gT, \gT'$ of $\gG$ are conjugate, because their special fibers are and $\text{Transp}_\gG( \gT, \gT')$ is smooth.

There is a notion of $\gG$ being split as a reductive group over $\cO$.  We will not give the full definition here, which may be found in \cite[Def. 5.1.1]{Co}, and instead we shall only state those consequences of it that we shall use.

\begin{enumerate}

\item
\label{split1}
$\gG$ has a split maximal torus $\gT$.

\item
\label{split2}
There is a root system $\Phi \subset X^*(\gT)$, which gives the root systems of $(\gT_K,\gG_K)$ and $(\gT_k, \gG_k)$.  We will refer to $\Phi$ as the root system of $(\gT, \gG)$.

\item
\label{split3}
For each $\alpha \in \Phi$ there is a root space $\g_\alpha \subset \g$, which is a free $\cO$ module of rank one on which $\gT$ acts by $\alpha$, and there is a root space decomposition $\g = \gt \oplus \bigoplus_{\alpha \in \Phi} \g_\alpha$.

\item
\label{split4}
For each $\alpha \in \Phi$ there is a group scheme $\gU_\alpha$, isomorphic to the standard additive group over $\cO$, and a closed embedding of $\gU_\alpha$ in $\gG$ that induces the inclusion $\g_\alpha \hookrightarrow \g$ and which identifies $\gU_{\alpha,K}$ and $\gU_{\alpha,k}$ with the root subgroups of $\alpha$.

\item
\label{split5}
Let $\Phi^+ \subset \Phi$ be a system of positive roots.  There is a smooth closed subgroup scheme $\gU^+ < \gG$ with connected unipotent fibers, and such that for any ordering of $\Phi^+$ the multiplication map induces an isomorphism $\prod_{\alpha \in \Phi^+} \gU_\alpha \simeq \gU^+$.  It follows that the fibers of $\gU^+$ are the maximal unipotent subgroups of the fibers of $\gG$ corresponding to $\Phi^+$.  We likewise define $\gU^-$ for the system $-\Phi^+$.

\item 
\label{split6}
The multiplication map $\gU^- \times \gT \times \gU^+$ is an isomorphism onto an open subscheme $\mathfrak{C}$ of $\gG$ containing the identity.

\end{enumerate}

The Weyl group of $\Phi$ will also be referred to as the Weyl group of $\gT$ in $\gG$.  To show where these facts may be found in \cite{Co}, statements (\ref{split1})-(\ref{split3}) are consequences of the definition of splitness in Definition 5.1.1, together with the definition of the root spaces $\g_\alpha$ in Definition 4.1.1.  Statement (\ref{split4}) is Theorem 4.1.4, and statements (\ref{split5}) and (\ref{split6}) are Theorem 5.1.13.

These facts imply the following decomposition of the congruence subgroups $G_n$.

\begin{lemma}
\label{rootspace}

If $n \ge 1$, the multiplication map

\bes
\pi_n : U_n^- \times T_n \times U_n^+ \to G_n
\ees
is a bijection.

\end{lemma}

\begin{proof}

We let $\pi : \gU^- \times \gT \times \gU^+ \to \gG$ be the product map, which induces $\pi_n$ on $\tO$ points.  The injectivity of $\pi_n$ follows from the fact that $\pi$ is the composition of an isomorphism $\gU^- \times \gT \times \gU^+ \simeq \mathfrak{C}$ and an open inclusion $\mathfrak{C} \hookrightarrow \gG$, which are bijective and injective on $\tO$ points respectively.

To show surjectivity, the fact that $\mathfrak{C}$ contains the identity means that $\mathfrak{C}(\tO)$ contains $G_1$, and hence $G_n$.  If $y \in G_n$, it follows that there exists $x \in (\gU^- \times \gT \times \gU^+)(\tO)$ such that $\pi(x) = y$.  Let $\overline{x} \in (\gU^- \times \gT \times \gU^+)(\tO / \p^n)$ be the reduction of $x$ mod $\p^n$.  If $x \notin U_n^- \times T_n \times U_n^+$, then $\overline{x}$ will be different from the identity, and so will $\pi( \overline{x}) \in \gG( \tO / \p^n)$.  This contradicts the fact that $\pi(\overline{x})$ must be the reduction of $y$.

\end{proof}

If $\gG, \gH, \gT$, and $\gT_H$ are as in the statement of Proposition \ref{buildingcount}, the following theorem from \cite[Prop. 1.3]{Co2} implies that these groups are all split, and that $\gT$ and $\gT_H$ are split maximal tori in $\gG$ and $\gH$ respectively, so that the above discussion applies to them.

\begin{theorem}
\label{stdscheme}

If $G'$ is a split connected reductive group over $K$, then there is a unique smooth reductive group scheme $\gG'$ over $\cO$ with $\gG'_K \simeq G'$.  In particular, $\gG'$ is split.

\end{theorem}

The group scheme $\gG'$ in this theorem is sometimes referred to as the Chevalley group scheme of type $G'$.

We finish this section by proving the claim in footnote \ref{foot:tori}, i.e. that if $\gG$ and $\gH$ are smooth connected affine reductive, with $\gH$ closed in $\gG$ and $\gG_K$ and $\gH_K$ split, then there exist split maximal tori $\gT < \gG$ and $\gT_H < \gH$ with $\gT_H < \gT$.  First, Theorem \ref{stdscheme} implies that $\gH$ is split, so it has a split maximal torus $\gT_H$.  By \cite[Lemma 2.2.4]{Co}, there exists a smooth closed subgroup scheme $\mathfrak{Z} < \gG$, called the centralizer of $\gT_H$ in $\gG$, which by Definition 2.2.1 of \cite{Co} has the property that for any $\cO$-algebra $R$ the subset of $\gG(R)$ preserving $\gT_H$ under conjugacy is $\mathfrak{Z}(R)$.  It may be seen that the fibers of $\mathfrak{Z}$ are connected, reductive, and split, and applying Theorem \ref{stdscheme} again we see that $\mathfrak{Z}$ is split over $\cO$.  If we let $\gT$ be a split maximal torus of $\mathfrak{Z}$, then we have $\gT_H < \gT$ by combining $\gT_H(\tO) < \gT(\tO)$ with \cite[Prop. 1.7.6]{BT2}.

\subsection{Congruence flag varieties}

We now begin the proof of Proposition \ref{buildingcount}, starting with an outline of the method.  The set $\gG( \tO ) \mu(\varpi) \gG( \tO ) / \gG( \tO )$ can be identified with a $\gG(\tO)$-orbit in the building of $\gG_{\widetilde{K}}$, and $\cL(\lambda)$ is roughly the size of the intersection of $y \gH(\widetilde{K}) y^{-1}$ with the points of this orbit coming from $\gG(K)$.  If we define
\be
\label{PFdef}
P_\mu = \gG( \tO ) \cap \mu(\varpi) \gG( \tO ) \mu(\varpi)^{-1}, \quad F_\mu = \gG( \tO ) / P_\mu,
\ee
then $F_\mu$ naturally parametrizes $\gG( \tO ) \mu(\varpi) \gG( \tO ) / \gG( \tO )$, and in Lemma \ref{Lmucontrol} we bound $\cL(\mu)$ in terms of an intersection inside $F_\mu$.  In Section \ref{sec:filter} we bound this intersection using the natural congruence filtration on $F_\mu$.  Section \ref{sec:ratlflag} contains an extra argument involving flag varieties over $k$ that is needed to handle the first step of this filtration.

If $\lambda \in X_*(T_H)$, we define the objects $P_\lambda^H$ and $F_\lambda^H$ for $\gH$ as in (\ref{PFdef}).  Note that $P_\lambda^H = P_\lambda \cap \gH(\tO)$, so there is a natural injection $\iota : F_\lambda^H \rightarrow F_\lambda$.  The Galois group $\Gamma$ stabilises $P_\lambda$, hence acts on $F_\lambda$, and we may define

\begin{align*}
I_\lambda & = \{ h \in F_\lambda^H : y \iota(h) \in F_\lambda^\Gamma \}.
\end{align*}
The sets $I_\lambda$ control $\cL(\mu)$ as follows.

\begin{lemma}
\label{Lmucontrol}

We have $\cL(\mu) \le \sum_{\lambda \in \Lambda_\mu} \# I_\lambda$.

\end{lemma}

\begin{proof}

The map $\gG(K) / \gG( \cO ) \rightarrow \gG(\tK) / \gG( \tO )$ is an injection, and the images of $L(K) \gG(\cO)$ and $\gG(\cO) \lambda( \varpi) \gG(\cO)$ under this map are contained in
\begin{align*}
B_1 & = y \gH( \tK) y^{-1} \gG(\tO) / \gG( \tO ) = y \gH( \tK) \gG(\tO) / \gG( \tO ) \\
\text{and} \quad B_2 & = \{ g \in \gG( \tO) \lambda(\varpi) \gG( \tO) / \gG( \tO ) : \sigma(g) = g, \sigma \in \Gamma \}
\end{align*}
respectively, so that $\cL(\mu) \le \#( B_1 \cap B_2)$.  We may translate both of these sets by $y^{-1}$, to obtain
\[
y^{-1} B_1 = \gH( \tK) \gG(\tO) / \gG( \tO ), \quad y^{-1} B_2 = \{ g \in \gG( \tO) \lambda(\varpi) \gG( \tO) / \gG( \tO ) : \sigma(yg) = yg, \sigma \in \Gamma \}.
\]
Applying Lemma \ref{Cartancomp} gives

\bes
y^{-1}( B_1 \cap B_2) = \bigcup_{\lambda \in \Lambda_\mu} \{ g  \in \gH(\tO) \lambda(\varpi) \gG( \tO ) / \gG( \tO ) : \sigma (y g) = (y g), \sigma \in \Gamma \}.
\ees
It may be checked that the map

\begin{align*}
F_\lambda^H & \rightarrow \gH(\tO) \lambda(\varpi) \gG( \tO ) / \gG( \tO ) \\
h & \mapsto h \lambda(\varpi) \gG( \tO )
\end{align*}
is a bijection, and the condition that the coset $yh \lambda(\varpi) \gG( \tO )$ is fixed by $\Gamma$ is just that $y \iota(h)$ is fixed by $\Gamma$ as an element of $F_\lambda$.  This gives
\[
\{ g  \in \gH(\tO) \lambda(\varpi) \gG( \tO ) / \gG( \tO ) : \sigma (y g) = (y g), \sigma \in \Gamma \} \simeq I_\lambda,
\]
which completes the proof.

\end{proof}

\begin{lemma}
\label{Cartancomp}

We have $\gH(\tK) \gG( \tO ) \cap \gG( \tO ) \mu(\varpi) \gG( \tO ) = \bigcup_{\lambda \in \Lambda_\mu} \gH(\tO) \lambda(\varpi) \gG( \tO )$.

\end{lemma}

\begin{proof}

It is clear that the right hand side is contained in the left.  For the reverse inclusion, let $hg \in \gG( \tO ) \mu(\varpi) \gG( \tO )$ with $h \in \gH(\tK)$ and $g \in \gG( \tO )$.  Applying the Cartan decomposition on $\gH(\tK)$ gives $h \in \gH(\tO) \lambda(\varpi) \gH(\tO)$ for some $\lambda \in X_*(\gT_H)$, and comparing this with the Cartan decomposition on $\gG(\tK)$ gives $\lambda \in W\mu$ as required.

\end{proof}

\subsection{Filtrations on coset spaces}
\label{sec:filter}

By Lemma \ref{Lmucontrol}, it suffices to prove
\be
\label{Slambdabd}
\# I_\lambda \ll q^{ 2 \| \lambda \|_H^* }
\ee
for all $\lambda \in X_*(T_H)$.  In this section we shall prove this using a congruence filtration on $I_\lambda$ defined using the natural filtrations on $F_\lambda$ and $F_{H, \lambda}$.  For $i \ge 0$, define
\[
P(\lambda,i) = P_\lambda G_i, \quad F(\lambda,i) = G_0 / P(\lambda,i),
\]
so that $F(\lambda, i)$ stabilize at $F_\lambda$ for $i$ large, and we have maps $\pi_i : F(\lambda, i) \to F(\lambda, i-1)$ for $i \ge 2$.  We define $P_H(\lambda,i)$, $F_H(\lambda, i)$, and $\pi_{H,i}$ similarly for $\gH$.  There are maps $\iota : F_H(\lambda, i) \to F(\lambda, i)$ for all $i$, which we prove are injective in Lemma \ref{iotainject}.  We define the sets $I(\lambda, i)$ by
\bes
I(\lambda,i) = \{ h \in F_H(\lambda,i) : y \iota(h) \in F(\lambda,i)^\Gamma \}.
\ees
The sets $I(\lambda, i)$ stabilize at $I_\lambda$, and the projections $\pi_{H,i}$ map $I(\lambda, i)$ to $I(\lambda, i-1)$.  We shall bound $I(\lambda,i)$ by bounding $I(\lambda, 1)$, and the fibers of $I(\lambda, i)$ under $\pi_{H,i}$.  First, we need to understand $F(\lambda, 1)$ and the fibers of $\pi_i$ on $F(\lambda, i)$, and show that $\iota$ is injective on $F_H(\lambda, i)$.

Let us introduce some notation.  $\lambda$ defines a map $\mathbb{G}_m / k \to \gT_{H,k} < \gT_k$, and composing this with the adjoint action of $\gT_{H,k}$ on $\overline{\g}$ and $\overline{\gh}$ gives these spaces $\Z$-gradings $\overline{\g} = \oplus_j \overline{\g}_j$ and $\overline{\gh} = \oplus_j \overline{\gh}_j$ such that $\mathbb{G}_m / k$ acts on $\overline{\g}_j$ by $x \mapsto x^j$.  We have $\overline{\gh}_j = \overline{\gh} \cap \overline{\g}_j$ for all $j$.  For $i \ge 0$, define

\begin{align*}
\g(\lambda,i) & := \bigoplus_{j \le i} \overline{\g}_j = \overline{\gt} + \bigoplus_{ \alpha \in \Delta : \langle \alpha, \lambda \rangle \le i } \overline{\g}_\alpha, \\
\gh(\lambda,i) & := \bigoplus_{j \le i} \overline{\gh}_j = \overline{\gt}_H + \bigoplus_{ \alpha \in \Delta_H : \langle \alpha, \lambda \rangle \le i } \overline{\gh}_\alpha.
\end{align*}
We have $\gh(\lambda, i) = \overline{\gh} \cap \g(\lambda, i)$.  The following lemma shows that these subspaces describe the higher steps of the congruence filtrations of $P_\lambda$ and $P_{H, \lambda}$.

\begin{lemma}
\label{glambdai}

For $i \ge 2$, we have $P(\lambda, i) \cap G_{i-1} / G_i = \g(\lambda,i)$ under the isomorphism $G_{i-1} / G_i \simeq \overline{\g}$, and likewise for $\gH$.

\end{lemma}

\begin{proof}

We begin by describing the intersection $G_n \cap \lambda(\varpi) G_0 \lambda(\varpi)^{-1}$ for any $n \ge 1$ in terms of root spaces.  We recall the objects $\Phi$, $\Phi^+$, $\gU_\alpha$, etc. and the properties (\ref{split1})-(\ref{split6}) they satisfy from Section \ref{sec:schemes2}.  We may assume that $\Phi^+$ is chosen so that $\lambda \in X_*^+(\gT)$.  We will show that

\be
\label{flagint}
G_n \cap \lambda(\varpi) G_0 \lambda(\varpi)^{-1} = \prod_{\alpha \in \Phi^-} U_{\alpha, n} \times T_n \times \prod_{\alpha \in \Phi^+} U_{\alpha,\max(n, \langle \alpha, \lambda \rangle ) }.
\ee
The inclusion of the right hand side in the left hand side follows from the formula
\be
\label{rootaction}
t U_{\alpha,m} t^{-1} = U_{\alpha, m + v(\alpha(t)) } \quad \text{for} \quad t \in \gT(\widetilde{K}),
\ee
where $v : K^\times \to \Z$ is the valuation.  To show the reverse inclusion, let $g \in G_n \cap \lambda(\varpi) G_0 \lambda(\varpi)^{-1}$.  Write $g = u^- t u^+ \in U_n^- T_n U_n^+$ using Lemma \ref{rootspace}.  Then $g^{\lambda(\varpi)^{-1}} = (u^-)^{\lambda(\varpi)^{-1}} t (u^+)^{\lambda(\varpi)^{-1}} \in G_0$.  Equation (\ref{rootaction}) gives $(u^-)^{\lambda(\varpi)^{-1}} t  \in G_0$, so we must have $(u^+)^{\lambda(\varpi)^{-1}} \in \gU^+( \widetilde{K}) \cap G_0$.  Because $\gU^+$ is closed in $\gG$, we have $\gU^+(\widetilde{K}) \cap G_0 = U^+_0$.

We may show that multiplication gives a bijection $\prod_{\alpha \in \Phi^+} U_{\alpha, n} \simeq U^+_n$ as in Lemma \ref{rootspace}, and we write $u^+ = \prod_{\alpha \in \Phi^+} u_\alpha$ with $u_\alpha \in U_{\alpha, n}$.  We have $(u^+)^{\lambda(\varpi)^{-1}} = \prod_{\alpha \in \Phi^+} (u_\alpha)^{\lambda(\varpi)^{-1}} \in U^+_0$, and comparing the bijections $\prod_{\alpha \in \Phi^+} U_{\alpha,0} \simeq U^+_0$ and $\prod_{\alpha \in \Phi^+} \gU_\alpha(\widetilde{K}) \simeq \gU^+(\widetilde{K})$ we see that this implies $(u_\alpha)^{\lambda(\varpi)^{-1}} \in U_{\alpha, 0}$ for all $\alpha \in \Phi^+$.  Equation (\ref{rootaction}) gives $u_\alpha \in U_{\alpha, \langle \alpha, \lambda \rangle } \cap U_{\alpha, n}$, so that $g$ lies in the right hand side of (\ref{flagint}).

To finish the proof, $P(\lambda, i) \cap G_{i-1} / G_i$ is the image of $G_{i-1} \cap \lambda(\varpi) G_0 \lambda(\varpi)^{-1}$ in $G_{i-1} / G_i$.  The proposition now follows from (\ref{flagint}) and the fact that the identifications $U_{\alpha, i-1} / U_{\alpha, i} \simeq \overline{\gu}$ and $T_{i-1} / T_i \simeq \overline{\gt}$ are functorial for the inclusions of these groups in $\gG$.

\end{proof}

The analog of Lemma \ref{glambdai} for $i = 1$ is to describe the image of $P(\lambda, 1)$ inside $G_0 / G_1 \simeq \gG( \widetilde{k})$.  If $Q_\lambda$ is the parabolic subgroup of $\gG_k$ associated to $\lambda$, \cite[Prop. 3.8]{Hr} states that $P(\lambda, 1) / G_1 \simeq Q_\lambda(\widetilde{k})$.  We likewise define $Q_{H,\lambda}$ to be the parabolic subgroup of $\gH_k$ associated to $\lambda$, and have $P_H(\lambda, 1) / H_1 \simeq Q_{H, \lambda}( \widetilde{k})$.  We have $\text{Lie}(Q_\lambda) \otimes \tk = \g(\lambda, 0)$ and $\text{Lie}(Q_{H,\lambda}) \otimes \tk = \gh(\lambda, 0)$, and because a parabolic subgroup is the normalizer of its Lie algebra, it follows that $Q_{H,\lambda}(\widetilde{k}) = \gH(\widetilde{k}) \cap Q_\lambda(\widetilde{k})$.

\begin{lemma}
\label{iotainject}

The map $\iota : F_H(\lambda, i) \to F(\lambda, i)$ is an injection for any $i$.

\end{lemma}

\begin{proof}

It is equivalent to show that $P(\lambda, i) \cap H_0 = P_H(\lambda, i)$, and we prove this by induction on $i$.  For the base case $i = 1$ we must show that $P_\lambda G_1 \cap H_0 = P_{H,\lambda} H_1$.  We have a commutative diagram
\[
\begin{CD}
H_0  @>>> \gH(\widetilde{k}) \\
@VVV   @VVV \\
G_0  @>>> \gG(\widetilde{k}). \\
\end{CD}
\]
The discussion above implies that $P_\lambda G_1 \cap H_0$ is the preimage of $Q_\lambda(\widetilde{k})$ under the bottom left pair of arrows.  On the other hand, $P_{H,\lambda} H_1$ is the preimage of $Q_{H, \lambda}( \widetilde{k})$ under the top arrow, and $Q_{H,\lambda}(\widetilde{k}) = \gH(\widetilde{k}) \cap Q_\lambda(\widetilde{k})$ completes the proof.

We now suppose that $P_\lambda G_{i-1} \cap H_0 = P_{H,\lambda} H_{i-1}$ for some $i \ge 2$, and show that $P_\lambda G_i \cap H_0 = P_{H,\lambda} H_i$.  One inclusion is clear, so we let $h \in P_\lambda G_i \cap H_0$ and wish to show that $h \in P_{H,\lambda} H_i$.  We have $h \in P_\lambda G_{i-1} \cap H_0$, and so our inductive hypothesis implies that $h \in P_{H,\lambda} H_{i-1}$.  We may therefore write $h = h_1 h_2$ with $h_1 \in P_{H,\lambda}$ and $h_2 \in H_{i-1}$.  As $h = h_1 h_2 \in P_\lambda G_i$ and $h_1 \in P_{H,\lambda} \subset P_\lambda$, this implies that $h_2 \in P_\lambda G_i \cap H_{i-1}$.  We have a commutative diagram
\[
\begin{CD}
H_{i-1}  @>>> H_{i-1} / H_i \simeq \overline{\gh} \\
@VVV   @VVV \\
G_{i-1}  @>>> G_{i-1} / G_i \simeq \overline{\g} \\
\end{CD}
\]
and $P_\lambda G_i \cap H_{i-1}$ is the inverse image of $P(\lambda, i) \cap G_{i-1} \simeq \g(\lambda,i)$ under the bottom left pair of arrows.  Because $\gh(\lambda, i) = \overline{\gh} \cap \g(\lambda_i)$, we may apply Lemma \ref{glambdai} for $\gH$ to show that  $P_\lambda G_i \cap H_{i-1} = P_{H,\lambda} H_i \cap H_{i-1}$.  This implies that $h_2 \in P_{H,\lambda} H_i$, so that $h \in P_{H,\lambda} H_i$ as required.

\end{proof}

We now use these results to describe $F(\lambda, 1)$ and the fibers of $\pi_i$.  Note that Lemma \ref{glambdai} implies that $\g(\lambda, i)$ is stable under the adjoint action of $P(\lambda, i)$, so that the subspace $\textup{Ad}_z( \g(\lambda, i) )$ with $z \in F(\lambda, i-1)$ in the following proposition is well defined.

\begin{prop}
\label{Ffibers}

We have the following identifications:

\begin{enumerate}

\item
\label{fiber1}
$F(\lambda, 1) = \gG( \tk) / Q_\lambda(\tk)$.

\item 
\label{fiber2}
If $i \ge 2$ and $z \in F(\lambda,i-1)$, the action of $G_{i-1} / G_i \simeq \overline{\g}$ on $\pi_i^{-1}(z)$ by left multiplication is transitive, and makes $\pi_i^{-1}(z)$ a torsor for $\overline{\g} / \textup{Ad}_z( \g(\lambda, i) )$.

\end{enumerate}

\noindent
Moreover, these identifications also hold for $\gH$ (with $\gh(\lambda, i)$ in place of $\g(\lambda, i)$) in a way that is compatible with the natural inclusions on both sides.

\end{prop}

\begin{proof}

Part (\ref{fiber1}) is immediate.  For part (\ref{fiber2}), the claim about the transitive action is clear.  If $z P(\lambda, i) \in F(\lambda, i)$, the stabilizer of $z P(\lambda, i)$ in $G_{i-1}$ is equal to $G_{i-1} \cap z P(\lambda, i) z^{-1}$, so the statement follows from Lemma \ref{glambdai}.

\end{proof}

We now apply these results to bound $I(\lambda, 1)$ and the fibers of $\pi_{H,i}$ on $I(\lambda, i)$.  To deal with $I(\lambda, 1)$, Proposition \ref{Ffibers} (\ref{fiber1}) gives an identification of $I(\lambda, 1)$ with $y \iota( (\gH_k / Q_{H,\lambda})( \widetilde{k}) ) \cap (\gG_k / Q_\lambda)(k)$, and (\ref{flagint1}) implies that $\# I(\lambda, 1) \ll q^{ \dim( \overline{\gh} / \gh(\lambda, 0) ) }$, where the dependence of the implied constant is the same as in Proposition \ref{buildingcount}.  The fibers of $\pi_i$ are controlled by the following lemma.

\begin{lemma}
\label{fibre2}

If $i \ge 2$, the fibers of $\pi_{H,i} : I(\lambda,i) \rightarrow I(\lambda,i-1)$ have size at most $q^{ \dim ( \overline{\gh} / \gh(\lambda, i-1) )}$.

\end{lemma}

\begin{proof}

If we choose $z_0 \in I(\lambda,i-1)$, the fiber above $z_0$ we wish to bound is $\pi_{H,i}^{-1}(z_0) \cap I(\lambda, i)$.  We may suppose that this fiber is nonempty, and let $z \in \pi_{H,i}^{-1}(z_0) \cap I(\lambda, i)$ be an element.  Combining Proposition \ref{Ffibers} (\ref{fiber2}) with the choice of basepoint $z$ gives a bijection $\pi_{H,i}^{-1}(z_0) \simeq \overline{\gh} / \text{Ad}_{z_0}( \gh(\lambda, i-1) )$, and we wish to determine $\pi_{H,i}^{-1}(z_0) \cap I(\lambda, i)$ in terms of this bijection.

If $X \in \overline{\gh} / \text{Ad}_{z_0}( \gh(\lambda, i-1) )$, then $X \cdot z$ lies in $I(\lambda, i)$ if and only if $y \iota(X \cdot z)$ lies in $F(\lambda,i)^\Gamma$.  The compatibility assertion in Proposition \ref{Ffibers} implies that $y \iota(X \cdot z) = \text{Ad}_y(X) \cdot y \iota(z)$, where now
\[
\text{Ad}_y(X) \in \text{Ad}_y \overline{\gh} / \text{Ad}_{y \iota(z_0)} \gh(\lambda, i-1) \subset \overline{\g} / \text{Ad}_{y \iota(z_0)} \g(\lambda, i-1).
\]
Moreover, the assumption $z \in I(\lambda, i)$ means that $y \iota(z) \in F(\lambda, i)^\Gamma$.  This implies that $\text{Ad}_{y \iota(z_0)}( \g(\lambda, i-1))$ is $\Gamma$-invariant, so that it makes sense to talk about the $\Gamma$-action on $\text{Ad}_y(X) \in  \overline{\g} / \text{Ad}_{y \iota(z_0)}( \g(\lambda, i-1))$.  Combining these, we have
\[
\sigma( y \iota(X \cdot z) ) = \sigma( \text{Ad}_y(X)) \cdot y \iota(z) \quad \text{for all} \quad \sigma \in \Gamma.
\]
We therefore have $y \iota(X \cdot z) \in F(\lambda,i)^\Gamma$ if and only if $\sigma( \text{Ad}_y(X)) = \text{Ad}_y(X)$ for all $\sigma \in \Gamma$, i.e if $\text{Ad}_y(X)$ is a $k$-rational vector in $\text{Ad}_y \overline{\gh} / \text{Ad}_{y \iota(z_0)}( \gh(\lambda, i-1))$ with respect to the rational structure on $\overline{\g} / \text{Ad}_{y \iota(z_0)} \g(\lambda, i-1)$.  It is clear that the number of such $X$ is at most $q^{ \dim ( \overline{\gh} / \gh(\lambda, i-1) )}$.

\end{proof}

The bound (\ref{Slambdabd}), and hence Proposition \ref{buildingcount}, now follows by combining $\# I(\lambda, 1) \ll q^{ \dim( \overline{\gh} / \gh(\lambda, 0) ) }$ with Lemma \ref{fibre2} and

\begin{lemma}

We have $\sum_{i=0}^\infty \dim( \overline{\gh} / \gh(\lambda, i) ) = 2 \| \lambda^* \|_H$.

\end{lemma}

\begin{proof}

We have $\dim( \overline{\gh} / \gh(\lambda, i) ) = \# \{ \alpha \in \Delta_H : \langle \alpha, \lambda \rangle > i \}$.  Each $\alpha$ makes a contribution of $\langle \alpha, \lambda \rangle$ to the sum if $\langle \alpha, \lambda \rangle \ge 0$, and 0 otherwise.

\end{proof}

\subsection{Complexity bounds for flag varieties}
\label{sec:ratlflag}

We now show that
\be
\label{flagint1}
\# y \iota( (\gH_k / Q_{H,\lambda})( \widetilde{k}) ) \cap (\gG_k / Q_\lambda)(k) \ll q^{ \dim( \gH_k / Q_{H,\lambda}) }.
\ee
This bound is what one would naively expect from dimension considerations, and it follows from Lemma \ref{pointcount} once we bound the complexity of $\iota(\gH_k / Q_{H,\lambda})$ in $\gG_k / Q_\lambda$.

As we shall only work over $\tk$ in this section, we simplify notation and denote $\gG_k$, $\gH_k$, $Q_\lambda$ and $Q_{H,\lambda}$ by $G$, $H$, $Q$, and $Q_H$ respectively, with Lie algebras $\g, \gh, \gq$, and $\gq_H$.  We recall that $Q_H(\tk) = Q(\tk) \cap H(\tk)$ and $\gq_H = \gh \cap \gq$.  Let $N = \binom{\dim G}{\dim Q} - 1$, and identify $\bbP^N$ with $\bbP \bigwedge^{\dim Q} \g$ so that $G$ acts on $\bbP^N$.  Let $x = \bigwedge^{\dim Q} \gq \in \bbP^N(k)$, and let $\phi_0 : G \to \bbP^N$ be the orbit map $g \mapsto g.x$.  The induced map on points factors through a map $\phi_G : (G/Q)(\tk) \to \bbP^N(\tk)$, and we may likewise define $\phi_H : (H/Q_H)(\tk) \to \bbP^N(\tk)$.  The universal property of quotients implies that $\iota$, $\phi_H$, and $\phi_G$ may be upgraded to morphisms of varieties, rather than just maps on points.  We define $J = \phi_H( H / Q_H)$, which is closed and irreducible as $H/Q_H$ is complete and irreducible.

It may be seen that if $p \in (H/Q_H)(\tk)$ satisfies $y \iota(p) \in (G/Q)(k)$, then $y \phi_H(p) = \phi_G (y \iota(p)) \in \bbP^N(k)$.  In particular, we have
\[
y \iota( (H / Q_H)( \widetilde{k}) ) \cap (G / Q)(k) \subset y \phi_H( (H/Q_H)(\tk)) \cap \bbP^N(k) = y J(\tk) \cap \bbP^N(k).
\]
Because $\dim J \le \dim(H/Q_H)$, the bound (\ref{flagint1}) now follows by applying Lemma \ref{pointcount} to the variety $y J(\tk) \subset \bbP^N$, once we have a bound on the complexity of $J$ in $\bbP^N$.  If we let $\text{Ad}$ be the adjoint map $H \to GL(\g)$, then $J$ is the image of $\text{Ad}(H)$ under the orbit map $GL(\g) \to \bbP^N$.  Because the complexity of the orbit map is clearly bounded in terms of $\dim G$, Lemma \ref{composecomplex} implies that the complexity of $J$ is bounded in terms of that of $\text{Ad}(H)$.  This completes the proof of (\ref{flagint1}).

\subsection{Proof of Proposition \ref{y1}}
\label{y1proof}

We first prove the claim that $\Lambda_\mu$ is invariant under $W_H$.  If we let $\lambda \in \Lambda_\mu$ and $w_H \in W_H$, we wish to show that $w_H \lambda \in \Lambda_\mu$, or equivalently that $w_H \lambda$ and $\lambda$ lie in the same $W$ orbit.  We may find a representative $w_H^* \in N_{\gH(K)}( \gT_H)$ for $w_H$, so that $w_H \lambda = w_H^* \lambda (w_H^*)^{-1}$.  After considering $w_H^*$ as an element of $\gG(K)$ this implies that $\lambda$ and $w_H \lambda$ are conjugate in $\gG(K)$, and this implies they are conjugate under $W$ by a standard argument which we now recall.

Let $\mathfrak{Z}$ denote the centralizer in $\gG_K$ of the image of the cocharacter $w_H^* \lambda (w_H^*)^{-1}$.  Because $w_H^* \lambda (w_H^*)^{-1}$ lies in both $\gT_K$ and $w_H^* \gT_K (w_H^*)^{-1}$, these are both split maximal tori in $\mathfrak{Z}$.  Because $\mathfrak{Z}$ is connected and reductive, $\gT_K$ and $w_H^* \gT_K (w_H^*)^{-1}$ are conjugate by some $z \in \mathfrak{Z}(K)$.  This implies that $n = z w_H^* \in N_{\gG(K)}( \gT_K)$, and $n \lambda n^{-1} = z w_H^* \lambda (w_H^*)^{-1} z^{-1} = w_H^* \lambda (w_H^*)^{-1}$ as required.

By the invariance of $\Lambda_\mu$, we have
\be
\label{balldecomp}
( \gH(K) \gG(\cO) \cap \gG(\cO) \mu(\varpi) \gG(\cO) ) / \gG(\cO) = \bigcup_{ \lambda \in \Lambda_\mu / W_H} \gH(\cO) \lambda(\varpi) \gG(\cO) / \gG(\cO)
\ee
as in Lemma \ref{Cartancomp}.  We also see that $\gH(\cO) \lambda(\varpi) \gG(\cO) / \gG(\cO)$ is the image of $\gH(\cO) \lambda(\varpi) \gH(\cO) / \gH(\cO)$ under the injection $\gH(K) / \gH(\cO) \to \gG(K) / \gG(\cO)$, which implies that the union on the right hand side of (\ref{balldecomp}) is disjoint, and
\[
\# ( \gH(K) \gG(\cO) \cap \gG(\cO) \mu(\varpi) \gG(\cO) ) / \gG(\cO) = \sum_{ \lambda \in \Lambda_\mu / W_H} \#( \gH(\cO) \lambda(\varpi) \gH(\cO) / \gH(\cO)).
\]
We count $\#( \gH(\cO) \lambda(\varpi) \gH(\cO) / \gH(\cO))$ using the methods of Section \ref{sec:filter}.  For $n \ge 1$ let $H^0_n = \text{ker}( \gH(\cO) \to \gH(\cO / \p^n) )$, and for $\lambda \in X_*(\gT_H)$ define
\[
P_\lambda^0 = \gH(\cO) \cap \lambda(\varpi) \gH(\cO) \lambda(\varpi)^{-1}, \quad F_\lambda^0 = \gH(\cO) / P_\lambda^0, \quad F^0(\lambda, i) = \gH(\cO) / P_\lambda^0 H^0_i.
\]
We now let $\pi_i$ be the projection $F^0(\lambda, i) \to F^0(\lambda, i-1)$, and let $Q_{H,\lambda}$ be the parabolic subgroup of $\gH_k$ associated to $\lambda$.  As in Proposition \ref{Ffibers} we have $F^0(\lambda, 1) \simeq \gH(k) / Q_{H,\lambda}(k)$, and the fibers of $\pi_i$ each have the same cardinality as that of $\gh(k) / ( \gh(k) \cap \gh(\lambda, i) )$, which is $q^{\dim \overline{\gh} / \gh(\lambda, i)}$.  This implies that
\[
\#( \gH(\cO) \lambda(\varpi) \gH(\cO) / \gH(\cO)) = q^{ 2 \| \lambda \|^*_H} \frac{ \# \gH(k) / q^{\textup{dim} \gH} }{ \# Q_{H,\lambda}(k) / q^{ \textup{dim} Q_{H,\lambda}} },
\]
which completes the proof of Proposition \ref{y1}.

\section{Constructing an amplifier}
\label{sec7}

This section contains the construction of the amplifier used in Section \ref{sec3amp}.  Let $F$ be a $p$-adic field with integer ring $\cO$, uniformizer $\varpi$, and residue field of cardinality $q$.  Let $\bG$ be a split semisimple algebraic group over $F$.  Let $\bB = \bT \bU$ be a Borel subgroup defined over $F$, where $\bT$ is a maximal split torus and $\bU$ is the unipotent radical of $\bB$.  Let $K$ be a hyperspecial subgroup of $\bG(F)$ corresponding to a point in the apartment of $\bT$.  Let $\Delta \subset X^*(\bT)$ be the roots of $\bT$ in $\bG$, and let $\Delta^+$ be the positive roots for $\bB$.  Let $\rho$ be the half sum of the roots in $\Delta^+$.  Define $X_*^+(\bT)$ to be the set

\bes
X_*^+(\bT) = \{ \mu \in X_*(\bT) : \langle \mu, \alpha \rangle \ge 0, \alpha \in \Delta^+ \}.
\ees
Let $W$ be the Weyl group of $(\bG,\bT)$.  Define the norm $\| \cdot \|^*$ on $X_*(\bT)$ as in Section \ref{alggps}.  Define $\cH = C_0( K \backslash \bG(F) / K)$.  If $\lambda \in X_*(\bT)$, define $\tau(\lambda) = q^{- \| \lambda \|^*} 1_{K\lambda(\varpi)K} \in \cH$.  For $\kappa \ge 0$, define the truncated Hecke algebra $\cH^{\le \kappa} = \text{span} \{ \tau(\mu) : \mu \in X_*^+(\bT), \| \mu \|^* \le \kappa \}$.  The main result of this section is the following.

\begin{prop}
\label{satake}

There exist $C_1, C_2, C_3 > 0$ depending only on $| F : \Q_p |$ and $\dim(\bG)$ with the following property.  Let $\theta$ be a character of $\cH$, let $\mu \in X_*(\bT)$, and let $\kappa = C_1 \| \mu \|^* + C_2$.  There exists $\tau = \tau_0 + \sigma \in \cH$ with the following properties:

\begin{enumerate}

\item\label{tau1} $\theta(\tau) = 1$.
\item\label{tau2} $\tau$, $\tau^0$, $\sigma$, and $\tau \tau^*$ all lie in $\cH^{\le \kappa}$.
\item\label{tau3} $\| \tau \|_2, \| \tau \tau^* \|_2 \ll 1$ and $\| \sigma \|_2 \ll q^{-1}$.

\item\label{tau4} $\displaystyle \tau^0 = \sum_{1 \le |j| \le |W|} \sum_{ \| \lambda \|^* < C_3 } c(j,\lambda) \tau(j \mu + \lambda)$ with $c(j,\lambda) \ll 1$.

\end{enumerate}

\noindent
The implied constants in (\ref{tau3}) depend only on $| F : \Q_p |$, $\dim(\bG)$, and $\| \mu \|^*$.

\end{prop}

\subsection{Background on the spherical transform}

We let $dg$ be the Haar measure on $\bG(F)$ that gives mass 1 to $K$.  Let $\hG$ and $\hT$ be the Langlands dual groups of $G$ and $T$, and let $\hT_c$ be the maximal compact subgroup of $\hT$.  We have $\hT \simeq \text{Hom}( \bT(F) / \bT(\cO), \C^\times)$, where $\bT(\cO)$ denotes the maximal compact subgroup of $\bT(F)$, via the maps
\be
\label{torusisoms}
\text{Hom}( \bT(F) / \bT(\cO), \C^\times) \simeq \text{Hom}( X_*(\bT), \C^\times) \simeq \text{Hom}( X^*(\hT), \C^\times) \simeq X_*(\hT) \otimes_\Z \C^\times \simeq \hT.
\ee
Here, the second isomorphism is induced by the map $X_*(\bT) \to \bT(F)$ sending $\mu$ to $\mu(\varpi)$.

Given $\nu \in \hT$, we may define a character $\theta_\nu : \cH \rightarrow \C$ in the following way.  Consider $\nu$ as a character $\chi_\nu$ of $\bT(F)$, and let $\pi_\nu$ be the unique spherical subquotient of the normalised induction of $\chi_\nu$ from $\bB$ to $\bG$.  We then choose a nonzero $v \in \pi_\nu^K$, and define $\theta_\nu$ by $\theta_\nu(k) v = \pi_\nu(k) v$ for $k \in \cH$.  The following facts are standard, see e.g. $\mathsection$6, $\mathsection$7, and $\mathsection$10.4 of \cite{Bo}.

\begin{enumerate}[(i)]

\item $\nu \mapsto \theta_\nu$ defines a bijection between $\hT / W$ and characters of $\cH$.

\item For every $\nu \in \hT / W$, there is a function $\varphi_\nu \in C^\infty(K \backslash \bG(F) / K)$ such that $\theta_\nu(k) = \int k(g) \varphi_\nu(g) dg$.

\item If $k \in \cH$, we define $\widehat{k}(\nu) = \int k(g) \varphi_\nu(g) dg$.  There is a probability measure $\mu_\text{pl}$ on $\hT_c$ such that

\be
\label{sphinvert}
k(g) = \int_{\hT_c} \widehat{k}(\nu) \varphi_{-\nu}(g) d\mu_\text{pl}(\nu)
\ee
and

\be
\label{planch}
\| k \|_2^2 = \int_{\hT_c} | \widehat{k}(\nu) |^2 d\mu_\text{pl}(\nu).
\ee

\item We have $\widehat{k^*}(\nu) = \overline{\widehat{k}}(\nu)$.

\end{enumerate}

We shall use a formula for the functions $\varphi_\nu$ when $\nu$ is nonsingular, due to Macdonald \cite{Mc2} when $\bG$ is simply connected, and Casselman \cite{Cs} for general $\bG$.  We define the function $c(\nu)$ on the nonsingular set in $\hT$ by
\[
c(\nu) = \prod_{\alpha \in \Delta^+} \frac{1 - q^{-1} \alpha^\vee(\nu)}{1 - \alpha^\vee(\nu)}.
\]
The formula of Macdonald and Casselman then states that if $\nu$ is nonsingular and $\mu \in X_*^+(\bT)$, then
\be
\label{Mcdonald}
\varphi_\nu( \mu(\varpi)) = Q^{-1} q^{-\langle \mu, \rho \rangle} \sum_{w \in W} c(-w\nu) \mu( w \nu),
\ee
where $Q = 1 + O_{\text{dim}(\bG)}(q^{-1})$ is a constant depending on $q$ and the root system of $G$.  Indeed, this follows from \cite[Theorem 4.2]{Cs}, once we apply Remark 1.1 there that $a_\alpha = \alpha^\vee(\varpi)$, and the identity $\chi_\nu( \alpha^\vee(\varpi)) = \alpha^\vee(\nu)$ which follows from (\ref{torusisoms}).

We may express $\mu_\text{pl}$ in terms of $c(\nu)$ as

\[
d\mu_\text{pl} = C |c(\nu)|^{-2} d\nu,
\]
where $d\nu$ is a Haar probability measure on $\hT_c$, and $C$ is chosen to make $\mu_\text{pl}$ a probability measure.  This follows from the formula
\[
d\mu_\text{pl}( \nu) = C \frac{ \det( 1 - \text{ad}( \nu) | \text{Lie}( \hG) / \text{Lie}(\hT) )}{ \det( 1 - q^{-1} \text{ad}( \nu) | \text{Lie}( \hG) / \text{Lie}(\hT) )} d\nu,
\]
which Shin and Templier deduce in \cite[Prop. 3.3]{ST} from the results of \cite{Sd}, and observing that the ratio on the right hand side is equal to $c(\nu)^{-1} c(-\nu)^{-1} = |c(\nu)|^{-2}$.

We will also need the following Paley-Wiener theorem for the spherical transform.  Define a partial order on $X_*(\bT)$ by saying that $\lambda \ge \mu$ if $\lambda - \mu$ is a nonnegative linear combination of positive coroots.  Define $\cH_T = C_0( \bT(F) / \bT(\cO) )$, and if $\lambda \in X_*(\bT)$ define $\tau_T(\lambda) = 1_{\lambda(\varpi)\bT(\cO)} \in \cH_T$.  If $\lambda \in X^+_*(\bT)$, define the truncated Hecke algebras

\begin{align*}
\cH_\lambda & = \text{span} \langle \tau(\mu) : \mu \in X^+_*(\bT), \mu \le \lambda \rangle, \\
\cH_{T,\lambda} & = \text{span} \langle \tau_T(\mu) : \mu \in X_*(\bT), w \mu \le \lambda \text{ for all } w \in W \rangle.
\end{align*}
There is a natural isomorphism $\cH_T \simeq \C[X^*(\hT)]$, which may be viewed as a Fourier transform, and we define $\C[X^*(\hT)]_\lambda$ to be the subspace corresponding to $\cH_{T,\lambda}$.

\begin{lemma}
\label{paleywiener}

The map $k \mapsto \widehat{k}$ defines an isomorphism between $\cH_\lambda$ and $\C[X^*(\hT)]_\lambda^W$.

\end{lemma}

\begin{proof}

The spherical transform is the composition of the Satake isomorphism $\cS : \cH \rightarrow \cH_T^W$ with the Fourier transform $\cH_T^W \simeq \C[X^*(\hT)]^W$.  The result follows from the fact that $\cS$ gives an isomorphism between $\cH_\lambda$ and $\cH_{T,\lambda}^W$, see for instance \cite[p.148]{Ca}.

\end{proof}

\subsection{Proof of Proposition \ref{satake}}

Choose $\mu \in X^+_*(\bT)$.  Let $\nu_0 \in \hT$ correspond to the character $\theta$.  Following \cite[Sec. A.4]{SV1}, we define $h \in \C[ X^*(\hT) ]$ by

\bes
h = \sum_{1 \le |j| \le |W|} \overline{\sum_{w_1 \in W} j w_1 \mu( \nu_0)} \sum_{w_2 \in W} j w_2 \mu.
\ees
By applying the following lemma with $z_w = w \mu( \nu_0)$ for $w \in W$, we see that
\[
h(\nu_0) = \sum_{1 \le |j| \le |W|} \left| \sum_{w \in W} j w \mu( \nu_0) \right|^2 \ge C( \bG).
\]

\begin{lemma}

If $z_1, \ldots, z_m \in \C^\times$, there is $c(m) > 0$ such that
\bes
\sum_{1 \le |j| \le m} | z_1^j + \ldots + z_m^j | \ge c(m) > 0.
\ees

\end{lemma}

\begin{proof}

Let $z = (z_1, \ldots, z_m)$.  If we have
\[
\sum_{1 \le j \le m} | z_1^j + \ldots + z_m^j | = 0
\]
then $z = 0$.  Therefore, by compactness of the set $\| z \| = 1$, and homogeneity, we have
\[
\sum_{1 \le j \le m} | z_1^j + \ldots + z_m^j | \ge c(m) \min \{ \| z \|, \| z \|^m \}.
\]
If we define $z^{-1} = (z_1^{-1}, \ldots, z_m^{-1})$, then the lemma follows by applying the above inequality to $z$ and $z^{-1}$, together with $\| z \| \| z^{-1} \| \ge \langle z, z^{-1} \rangle = m$.

\end{proof}

We define $\tau \in \cH$ by $\widehat{\tau} = h / h(\nu_0)$.  To define $\tau^0$ and $\sigma$, we note that
\be
\label{cvdecomp}
c(-\nu)^{-1} = \prod_{\alpha > 0} \frac{1 - \alpha^\vee(-\nu) }{1 - q^{-1} \alpha^\vee(-\nu) } = \prod_{\alpha > 0} (1 - \alpha^\vee(-\nu)) + E(\nu)
\ee
where $\| E|_{\hT_c} \|_\infty \ll q^{-1}$.  We then set $\tau^0$ and $\sigma$ to be the functions on $K \backslash G / K$ defined by
\begin{align}
\label{tau0def}
\tau^0( \lambda(\varpi)) & = \frac{ q^{-\langle \rho, \lambda \rangle} }{ h(\nu_0)} \int_{\hT_c} h(\nu) \prod_{\alpha > 0} (1 - \alpha^\vee(-\nu)) \lambda( -\nu) d\nu, \\
\notag
\sigma( \lambda(\varpi)) & = \frac{ q^{-\langle \rho, \lambda \rangle} }{ h(\nu_0)} \int_{\hT_c} h(\nu) E(\nu) \lambda( -\nu) d\nu,
\end{align}
for $\lambda \in X_*^+(\bT)$.  These definitions are partly explained by the following lemma.

\begin{lemma}

We have $\tau = \tau^0 + \sigma$.

\end{lemma}

\begin{proof}

Equation (\ref{cvdecomp}) implies that

\[
\tau^0(\lambda(\varpi)) + \sigma( \lambda(\varpi)) = \frac{ q^{-\langle \rho, \lambda \rangle} }{ h(\nu_0)} \int_{\hT_c} h(\nu) c(-\nu)^{-1} \lambda( -\nu) d\nu
\]
for $\lambda \in X_*^+(\bT)$.  Because $d\lambda_\text{pl} = | c(\nu) |^{-2} d\nu$, we may rewrite this as
\[
\tau^0(\lambda(\varpi)) + \sigma( \lambda(\varpi)) = \frac{ q^{-\langle \rho, \lambda \rangle} }{ h(\nu_0)} \int_{\hT_c} h(\nu) c(\nu) \lambda( -\nu) d\lambda_\text{pl}(\nu).
\]
The $W$-invariance of $h$ means that we may introduce an average over $W$ into the integral, and the lemma now follows from the formula (\ref{Mcdonald}) for $\varphi_{-\nu}$ and the spherical inversion formula for $\tau$.

\end{proof}

We must now show that the three functions we have defined have the required properties.  Condition (\ref{tau1}) is immediate.

\noindent
$\bullet$ {\bf Condition (\ref{tau4}):} We begin with the following bound for $h$.

\begin{lemma}
\label{hbound}

There are constants $c(j) \ll 1$ such that
\be
\label{hexpand}
h / h(\nu_0) = \sum_{1 \le |j| \le |W|} c(j) \sum_{w \in W} j w\mu.
\ee
In particular, $\| h / h(\nu_0)|_{\hT_c} \|_\infty \ll 1$.

\end{lemma}

\begin{proof}

If we choose $c(j) = (\sum_{w \in W} j w \mu( \nu_0) )/h(\nu_0)$, then the expansion (\ref{hexpand}) holds.  On one hand, we have
\[
|c(j)| \le |\sum_{w \in W} j w \mu( \nu_0)| / C(\bG).
\]
On the other hand, substituting the formula for $h(\nu_0)$ and dropping all terms except the one corresponding to $j$, we have
\[
|c(j)| \le \frac{ \left| \sum_{w \in W} j w \mu( \nu_0) \right| }{ \left| \sum_{w \in W} j w \mu( \nu_0) \right|^2 } = \left| \sum_{w \in W} j w \mu( \nu_0) \right|^{-1}.
\]
Combining these gives the lemma.

\end{proof}

If we substitute the expansion of Lemma \ref{hbound} in (\ref{tau0def}), we obtain
\[
\tau^0( \lambda(\varpi)) = q^{-\langle \rho, \lambda \rangle} \sum_{1 \le |j| \le |W|} c(j) \sum_{w \in W} \int_{\hT_c} j w \mu(\nu) \prod_{\alpha > 0} (1 - \alpha^\vee(-\nu)) \lambda( -\nu) d\nu, \quad \lambda \in X_*^+(\bT).
\]
We expand the product over $\alpha$ as
\[
\prod_{\alpha > 0} (1 - \alpha^\vee) = \sum_{\beta \in \Xi} r(\beta) \beta
\]
for a subset $\Xi \subset X_*(\bT)$ and $r(\beta) \neq 0$.  The contribution from a term $j$, $w \in W$ and $\beta \in \Xi$ to $\tau^0(\lambda(\varpi))$ is zero unless $\lambda = j w\mu - \beta$, in which case it is $q^{-\langle \rho, \lambda \rangle} c(j) r(\beta)$.  It follows that $\tau_0$ is the sum over $1 \le |j| \le |W|$, $w \in W$, and $\beta \in \Xi$ satisfying $j w\mu - \beta \in X_*^+(\bT)$ of $\tau(j w\mu - \beta) c(j) r(\beta) = \tau(j \mu - w^{-1} \beta) c(j) r(\beta)$.  We have $\| w^{-1} \beta \|^* < C_3$ for some $C_3 > 0$, so $\tau_0$ has an expansion as in condition (\ref{tau4}).

\noindent
$\bullet$ {\bf Condition (\ref{tau3}):} It follows from Lemma \ref{paleywiener} that $\tau, \tau \tau^* \in \cH^{\le \kappa}$ if $\kappa = C_1 \| \mu \|^* + C_2$ for suitable $C_1$ and $C_2$.  Condition (\ref{tau4}) implies that the same is true for $\tau^0$, and $\tau = \tau^0 + \sigma$ implies it for $\sigma$.

\noindent
$\bullet$ {\bf Condition (\ref{tau2}):} The bounds $\| \tau \|_2, \| \tau \tau^* \|_2 \ll 1$ follow from the Plancherel theorem and the bound $\| h / h(\nu_0)|_{\hT_c} \|_\infty \ll 1$ of Lemma \ref{hbound}.  To prove $\| \sigma \|_2 \ll q^{-1}$, we have
\[
\sigma( \lambda(\varpi)) = q^{-\langle \rho, \lambda \rangle} \int_{\hT_c} \frac{h(\nu)}{h(\nu_0)} E(\nu) \lambda( -\nu) d\nu, \quad \lambda \in X_*^+(\bT),
\]
and combining this with $\| E|_{\hT_c} \|_\infty \ll q^{-1}$ and $\| h / h(\nu_0)|_{\hT_c} \|_\infty \ll 1$ gives $\sigma( \lambda(\varpi)) \ll q^{-\langle \rho, \lambda \rangle - 1}$.  The required bound follows from this, together with the asymptotic $\text{vol}( K \lambda(\varpi) K) \sim q^{2\langle \rho, \lambda \rangle}$ from Corollary \ref{Gy1}, and $\sigma \in \cH^{\le \kappa}$.

\section{Small subgroups}
\label{sec8}

In Sections \ref{sec8} and \ref{algsec}, we prove that condition ($\mathsf{WS}$) is equivalent to the cocharacter inequalities used in Section \ref{sec3}.  We also prove a related equivalence that is used in \cite{BM}.  Our main result is Theorem \ref{smallequiv}, which is stated in terms of semisimple groups with involution over a general field of characteristic 0.  In Corollary \ref{realsmall} we apply this theorem to a real group with its Cartan involution, to obtain the equivalence we need.

The structure of these sections is as follows.  In Section \ref{sec8invol} we recall some results about semisimple groups $G$ with involution $\theta$, and in Section \ref{sec8small} we define three conditions on a connected reductive subgroup $H$ of $G$, called small, quasi-small, and weakly small.  In Section \ref{sec8results} we state our main result, which characterizes when $(G^\theta)^0 < G$ satisfies these smallness conditions in terms of $\theta$.  In section \ref{sec8reduction} we reduce the main theorem to a statement about complex semisimple Lie algebras, and deduce it in many cases from work of Benoist and Kobayashi \cite{BK}.  We verify the remaining cases by hand in Section \ref{algsec}.

\subsection{Involutions of semisimple groups}
\label{sec8invol}

Let $G$ be a semisimple group over an algebraically closed field $F$ of characteristic 0.  Let $\theta$ be an involution of $G$.  If $T < G$ is a $\theta$-stable torus, we say that $T$ is $\theta$-split if $\theta$ acts on $T$ by inversion.  We say that $G$ is $\theta$-split if $G$ has a $\theta$-split maximal torus.  We say that $G$ is $\theta$-quasi-split if it has a Borel subgroup $B$ such that $B$ and $\theta(B)$ are opposed.

In \cite{Hl}, it is shown that pairs $(G,\theta)$, where $G$ is semisimple over $F$ and $\theta$ is a class of involutions under conjugacy in $G$, are classified by the same combinatorial data over any algebraically closed $F$ of characteristic not 2.  We briefly recall this data, called the index, and a weaker form called the diagram.

If $\Psi$ is a root datum with an involution $\theta$, the index of $\theta$ is a set of combinatorial data that determine $\Psi$ and $\theta$ up to the action of the Weyl group of $\Psi$.  If $(G,\theta)$ is a semisimple group with involution, Helminck defines its index by choosing a $\theta$-stable maximal torus such that $T^-$ is maximal $\theta$-split, letting $\Psi$ be the root datum of $T$ in $G$, and defining the index of $(G,\theta)$ to be that of $(\Psi,\theta)$.  In \cite[Theorem 3.11]{Hl}, Helminck shows that the index determines $(G,\theta)$ up to inner isomorphism, and he also determines the set of indices arising from semisimple groups with involution.  He associates to an index a weaker invariant called the diagram of $(G,\theta)$, which determines the pair up to isogeny.  If $G$ is a real semisimple group, we define the diagram of $G$ to be the diagram of $G_\C$ together with the Cartan involution of $G$.

\begin{lemma}
\label{index}

The index of $(G,\theta)$ is invariant under extension of algebraically closed fields, and determines whether $(G,\theta)$ is (quasi-) split.

\end{lemma}

\begin{proof}

Let $T$ be a maximal torus of $G$ such that $T^-$ is maximal $\theta$-split.  Let $F'$ be an algebraically closed field containing $F$.  By \cite[Prop. 2]{Vu}, $T^-$ is still a maximal $\theta$-split torus of $G \times F'$, and so we can also use $T$ to define the index of $(G \times F', \theta)$.  It follows that the index is the same over $F$ and $F'$.  Because the index determines the action of $\theta$ on $T$, it determines $T^-$ and hence whether $G$ is $\theta$-split.  The index also determines the roots of $T^-$ in $G$, and hence whether $Z_G(T^-)$ is a torus.  By \cite[p. 21, Corollaire]{Vu}, this determines whether $(G,\theta)$ is quasi-split.

\end{proof}

\begin{lemma}
\label{realsplit}

Let $G$ be a semisimple group over $\R$ with Cartan involution $\theta$.  $(G,\theta)$ is (quasi-) split if and only if $G$ is (quasi-) split over $\R$.

\end{lemma}

\begin{proof}

Let $\g = \p + \gk$ be the Cartan decomposition induced by $\theta$, and let $\ga \subset \p$ be a maximal Abelian subalgebra.  By \cite[Ch. 24.6 (e)]{Bo}, there is a maximal $\R$-split torus $T$ of $G$ with Lie algebra $\ga$.  It follows that $T$ is $\theta$-split, and because $Z_\g(\ga) = \ga \oplus Z_\gk(\ga)$, $T_\C$ must also be a maximal $\theta$-split torus of $G_\C$.  These imply that $G$ is split over $\R$ if and only if it is $\theta$-split.

Because $Z_G(T)$ is the Levi of a minimal $\R$-parabolic subgroup of $G$, $G$ is quasi-split if and only if $Z_G(T)$ is a torus.  Because $T_\C$ is a maximal $\theta$-split torus of $G_\C$, \cite[p. 21, Corollaire]{Vu} implies that $G$ is also $\theta$ quasi-split if and only $Z_G(T)$ is a torus.

\end{proof}

\subsection{Smallness}
\label{sec8small}

Let $G$ be a connected reductive group over an algebraically closed field $F$ of characteristic 0.  Let $X_*(G)$ be the set of cocharacters of $G$.  We define a function $\| \cdot \|^*$ on $X_*(G)$ as follows.  If $\mu : \mathbb{G}_m \to G$ is a cocharacter, we obtain a representation $\text{Ad} \circ \mu : \mathbb{G}_m \to GL(\g)$, and we define $\| \mu \|^* \in X^*(\mathbb{G}_m) \otimes \R \simeq \R$ to be half the sum of the positive weights of this representation.  If $T < G$ is a maximal torus, the restriction of $\| \cdot \|^*$ to $X_*(T)$ is also equal to $\| \mu \|^* = \underset{w \in W}{\max} \langle w \mu, \rho \rangle$, where $W$ is the Weyl group and $\rho$ is the half sum of the positive roots.  $\| \cdot \|^*$ is a seminorm on $X_*(T)$, and a norm if $G$ is semisimple; the condition $\| \mu \|^* = \| -\mu \|^*$ holds because $\rho$ and $-\rho$ lie in the same Weyl orbit.

We use the function $\| \cdot \|^*$ to define the following three conditions on reductive subgroups of $G$.

\begin{definition}
\label{small}

Let $H < G$ be connected reductive groups, and define the functions $\| \cdot \|^*$ and $\| \cdot \|^*_H$ on $X_*(G)$ and $X_*(H)$ as above.

\begin{itemize}

\item We say that $H$ is small in $G$ if $\| \mu \|^* > 2 \| \mu \|^*_H$ for all nonzero $\mu \in X_*(H)$.

\item We say that $H$ is quasi-small in $G$ if $\| \mu \|^* \ge 2 \| \mu \|^*_H$ for all $\mu \in X_*(H)$.

\item We say that $H$ is weakly small in $G$ if it is quasi-small, and either $H$ has lower rank than $G$, or there is $\mu \in X_*(H)$ such that $\| \mu^g \|^* > 2 \| \mu^g \|^*_H$ for any $g \in G$ with $\mu^g \in X_*(H)$.

\end{itemize}

\end{definition}

It will be convenient to rephrase these definitions in terms of cocharacters of tori.

\begin{lemma}
\label{smalltori}

Let $G$ and $H$ be as in Definition \ref{small}.  Let $T_H < T$ be maximal tori in $H$ and $G$.

\begin{itemize}

\item $H$ is small in $G$ if and only if $\| \mu \|^* > 2 \| \mu \|^*_H$ for all nonzero $\mu \in X_*(T_H)$.

\item $H$ is quasi-small in $G$ if and only if $\| \mu \|^* \ge 2 \| \mu \|^*_H$ for all $\mu \in X_*(T_H)$.

\item $H$ is weakly small in $G$ if and only if it is quasi-small and either $\dim T_H < \dim T$, or there is $\mu \in X_*(T)$ such that $\| \mu \|^* > 2 \underset{w \in W}{\max} \| w \mu \|^*_H$.

\end{itemize}

\end{lemma}

\begin{proof}

The first two equivalences are immediate from the fact that any $\mu \in X_*(H)$ can be conjugated into $T_H$, and the two seminorms are invariant under conjugation.  For the third, it suffices to assume that $T = T_H$ and prove the equivalence of the following:

\begin{enumerate}[(i)]

\item
\label{mu1}
There is $\mu \in X_*(H)$ such that if $\mu^g \in X_*(H)$, then $\| \mu^g \|^* > 2 \| \mu^g \|^*_H$.

\item
\label{mu2}
There is $\mu \in X_*(T_H)$ such that $\| \mu \|^* > 2 \| w \mu \|^*_H$ for all $w \in W$.

\end{enumerate}

Suppose $\mu \in X_*(H)$ satisfies (\ref{mu1}).  If $h \in H$ is such that $\mu^h \in X_*(T_H)$, we clearly have $\| \mu^h \|^* > 2 \| w \mu^h \|^*_H$ for all $w \in W$ so that (\ref{mu2}) holds.

Suppose $\mu \in X_*(T_H)$ satisfies (\ref{mu2}).  If $g \in G$ satisfies $\mu^g \in X_*(H)$, we wish to show that $\| \mu^g \|^* > 2 \| \mu^g \|^*_H$.  By conjugating in $H$, we may assume that $\mu^g \in X_*(T_H)$.  As the subtori $\mu(\mathbb{G}_m)$ and $\mu^g(\mathbb{G}_m)$ of $T$ are conjugate in $G$, they are conjugate under $W$ by the argument of Section \ref{y1proof}.  We therefore have $w \mu = \mu^g$, and so (\ref{mu2}) implies that $\| \mu^g \|^* > 2 \| \mu^g \|^*_H$ as required.

\end{proof}

We will need the following simple observation:

\begin{lemma}
\label{smallBC}

The conditions of Definition \ref{small} are invariant under extension of algebraically closed fields.

\end{lemma}

\begin{proof}

We use the formulations of Lemma \ref{smalltori}.  Let $F'$ be an algebraically closed field containing $F$.  If $T_H < T$ are maximal tori in $H$ and $G$, then they are still maximal in $H \times F'$ and $G \times F'$, and the seminorms and Weyl groups appearing in Lemma \ref{smalltori} are preserved under field extension.  It follows that the smallness conditions are preserved.

\end{proof}

\begin{remark}

Throughout the rest of the paper, we shall only use the smallness conditions in the form given by Lemma \ref{smalltori}.  We only gave them in the form of Definition \ref{small} to make it clear that they were independent of the choice of tori in Lemma \ref{smalltori}.

\end{remark}

If the field of definition is not algebraically closed, we say that $H$ is small in $G$ if this is true after passing to an algebraic closure.  With this, the condition that $H$ is small in $G$ is invariant under any extension of fields.

\subsection{Statement of results}
\label{sec8results}

Our main result is the following:

\begin{theorem}
\label{smallequiv}

Let $G$ be a semisimple group with involution $\theta$, and let $H = (G^\theta)^0$.  We have the following equivalences.

\begin{itemize}

\item $H$ is small in $G$ if and only if $G$ is $\theta$-split.

\item (Benoist-Kobayashi) $H$ is quasi-small in $G$ if and only if $G$ is $\theta$ quasi-split.

\item $H$ is weakly small in $G$ if and only if $(G,\theta)$ is quasi-split and does not have the same diagram as a product of the groups $SU(n,n-1)$.

\end{itemize}

\end{theorem}

We have attributed the second equivalence to Benoist-Kobayashi because it is implicit in Theorem 4.1 and Example 5.7 of \cite{BK}, as will be explained later.  By applying Theorem \ref{smallequiv} to a real group with its Cartan involution, combined with Lemma \ref{realsplit}, we have:

\begin{cor}
\label{realsmall}

Let $G/\R$ be a connected semisimple group, and let $K/\R$ be a subgroup such that $K(\R)$ is a maximal connected compact subgroup of $G(\R)$.

\begin{itemize}

\item $K$ is small in $G$ if and only if $G$ is $\R$-split.

\item $K$ is quasi-small in $G$ if and only if $G$ is $\R$ quasi-split.

\item $K$ is weakly small in $G$ if and only if $G$ satisfies $(\mathsf{WS})$.

\end{itemize}

\end{cor}

The conditions appearing in Theorem \ref{smallequiv} also arise in the study of the spectra of symmetric varieties, see for instance \cite{BK,GO} and Sections 2 and 3 of \cite{BM}.

\subsection{Preliminary reductions}
\label{sec8reduction}

In this section, we will reduce Theorem \ref{smallequiv} to a statement about complex semisimple Lie algebras with involution.  We begin with the following reduction.

\begin{lemma}

It suffices to prove Theorem \ref{smallequiv} for groups of adjoint type over $\C$.

\end{lemma}

\begin{proof}

Let $F$ be algebraically closed of characteristic 0, and let $(G_F,\theta)$ be a semisimple group with involution over $F$.  By the results of \cite{Hl}, there exists a semisimple group with involution $(G_{\overline{\Q}}, \theta)$ over $\overline{\Q}$ such that $(G_F,\theta)$ is isomorphic to the base change of $(G_{\overline{\Q}}, \theta)$.  By Lemmas \ref{index} and \ref{smallBC}, we may pass all the conditions of Theorem \ref{smallequiv} from $(G_F,\theta)$, to $(G_{\overline{\Q}}, \theta)$, to $(G_{\overline{\Q}} \times \C, \theta)$.  Finally, all conditions of Theorem \ref{smallequiv} are invariant under isogeny, which is clear for the splitness and diagram conditions, and follows from Lemma \ref{smalltori} for the smallness conditions.

\end{proof}

The set of complex adjoint groups with involution is naturally in bijection with complex semisimple Lie algebras with involution, and we next adapt the definitions appearing in Theorem \ref{smallequiv} to the Lie algebra setting.

\begin{lemma}
\label{Liedata1}

Let $\g_\C$ be a complex semisimple Lie algebra with involution $\theta$, and let $\gh_\C = \g_\C^\theta$.  There exist Cartan subalgebras $\gt_{H,\C} \subset \gt_\C$ of $\gh_\C$ and $\g_\C$, and real structures $V$ and $V_H$ on $\gt_\C$ and $\gt_{H,\C}$, such that $V_H \subset V$ and all roots of $\gt_\C$ in $\g_\C$ are real on $V$.

\end{lemma}

\begin{proof}

Let $G$ be the complex adjoint group with Lie algebra $\g_\C$.  We also use $\theta$ to denote the involution of $G$ with differential $\theta$ on $\g_\C$, and let $H = (G^\theta)^0$ so that $\gh_\C = \text{Lie}(H)$.  Let $T_H < T$ be maximal tori in $H$ and $G$, let $\gt_{H,\C} \simeq X_*(T_H) \otimes \C$ and $\gt_\C \simeq X_*(T) \otimes \C$ be their Lie algebras, and let $V_H = X_*(T_H) \otimes \R$ and $V = X_*(T) \otimes \R$.

\end{proof}

\begin{lemma}
\label{Liedata2}

Let $\g_\C$ be a complex semisimple Lie algebra with involution $\theta$, and let $\gh_\C = \g_\C^\theta$.  Any data $\gt_{H,\C}$, $\gt_\C$, $V_H$, and $V$ satisfying the conditions of Lemma \ref{Liedata1} arises from two complex tori as in the proof of that Lemma.

\end{lemma}

\begin{proof}

Let $G$, $\theta$, and $H$ be as in the proof of Lemma \ref{Liedata1}.  By the conjugacy of Cartan subalgebras, there exist complex tori $T_H < T$ in $H$ and $G$ with Lie algebras $\gt_{H,\C}$ and $\gt_\C$.  This determines real structures $V_H'$ and $V'$ on $\gt_{H,\C}$ and $\gt_\C$, but we must have $V_H = V_H'$ and $V = V'$ because the roots of $\gt_\C$ in $\g_\C$ span $\gt_\C^*$.

\end{proof}

Now let $\g_\C$, $\theta$, $V_H$, and $V$ be as in Lemma \ref{Liedata1}.  Lemma \ref{Liedata2} implies that $V_H$ and $V$ are invariant under the relevant Weyl groups.  We define functions $\| \cdot \|^*$ and $\| \cdot \|^*_H$ on $V$ and $V_H$ in analogy with the case of tori.  We adapt the definitions of (weakly, quasi-) small to $\gh_\C$ in $\g_\C$ in the natural way.  We also say that $\g_\C$ is $\theta$ (quasi-)split if and only if the associated pair $(G,\theta)$ is.  By Lemmas \ref{smalltori} and \ref{Liedata2}, $(\g_\C, \theta)$ is (weakly, quasi-) small if and only if $(G,\theta)$ is, so that we have reduced Theorem \ref{smallequiv} to the following statement about complex Lie algebras.

\begin{prop}
\label{Liesmall}

Let $(\g_\C, \theta)$ be a semisimple complex Lie algebra with involution.  Then $(\g_\C, \theta)$ is (quasi-)small if and only if it is (quasi-)split, and weakly small if and only if it is quasi-split and not a product of simple factors of type $\mathfrak{su}(n,n-1)$.

\end{prop}

We shall prove Proposition \ref{Liesmall} by reducing to the case where $\g_\C$ is simple and $\theta$ quasi-split, and then computing the remaining cases in Section \ref{algsec}.

The pair $(\g_\C, \theta)$ breaks up as a direct sum of simple pairs, which are either of the form $(\g_\C', \theta')$ with $\g_\C'$ simple, or of the form $(\g_\C' \oplus \g_\C', \theta')$ where $\g_\C'$ is simple and $\theta'$ switches the two factors.  The conditions (quasi-)split and (quasi-)small hold for $(\g_\C, \theta)$ if and only if they hold for every simple factor.  Likewise, the extra conditions defining weakly small hold if and only if they hold for just one simple factor, and likewise for the condition of not being a product of type $\mathfrak{su}(n,n-1)$.  It follows that it suffices to prove the equivalences of Proposition \ref{Liesmall} for simple $(\g_\C, \theta)$.

Suppose $(\g_\C, \theta)$ has the form $(\g_\C' \oplus \g_\C', \theta')$ where $\g_\C'$ is simple and $\theta'$ switches the two factors.  In this case, $\gh_\C$ is the diagonal copy of $\g_\C'$, and we have $\| \cdot \|^* = 2 \| \cdot \|^*_H$ so that $(\g_\C, \theta)$ is weakly and quasi-small but not small.  Correspondingly, the associated $(G,\theta)$ is weakly and quasi-split but not split, so Proposition \ref{Liesmall} holds in these cases.

We shall therefore assume that $\g_\C$ is simple from now on.  Because quasi-splitness, resp. quasi-smallness, is the weakest of the three conditions of its type appearing in Proposition \ref{Liesmall}, the following lemma allows us to assume that $(\g_\C,\theta)$ is quasi-split.

\begin{lemma}

$(\g_\C, \theta)$ is quasi-small if and only if it is quasi-split.

\end{lemma}

\begin{proof}

This follows from work of Benoist and Kobayashi \cite{BK} on the spectra of reductive homogeneous spaces, after translating their results into our language.

Let $\g$ be the real Lie algebra corresponding to $(\g_\C, \theta)$, and let $G$ be the real adjoint group with Lie algebra $\g$.  Let $K$ be a maximal connected compact subgroup of $G$, with Lie algebra $\gk$.  Let $T_K \subset T$ be maximal $\R$-tori in $K$ and $G$.  The space $V_K = X_*(T_K) \otimes_\Z \R$ is naturally isomorphic to a maximal $\R$-split abelian subalgebra of $\gk_\C$ (considered as a real Lie algebra).  Let $\gq_\C$ be the $\gk_\C$-stable complement to $\gk_\C$ in $\g_\C$.  We define the functions $\rho_\gk$ and $\rho_\gq$ on $V_K$ as in \cite[Section 3.1 and 4.1]{BK}, by considering $\gk_\C$ and $\gq_\C$ as real representations of $\gk_\C$.  (Note that $\rho_\gk$ is denoted $\rho_\gh$ in \cite{BK}.)  We then have $\rho_\gk = 4\| \cdot \|_K^*$ and $\rho_\gq = 4\| \cdot \|^* - 4\| \cdot \|_K^*$.  By \cite[Theorem 4.1]{BK}, the representation of $G(\C)$ on $L^2(G(\C) / K(\C) )$ is tempered if and only if $\rho_\gq(t) \ge \rho_\gk(t)$ for all $t \in V_K$, which is equivalent to $(\g_\C, \theta)$ being quasi-small.  Combining this with \cite[Example 5.7]{BK}, we see that $(\g_\C, \theta)$ is quasi-small if and only if $G$, and hence $(\g_\C, \theta)$, are quasi-split.

\end{proof}

\section{Lie algebra computations}
\label{algsec}

We now finish the proof of Proposition \ref{Liesmall}, by checking it directly when $\g_\C$ is simple and $\theta$ quasi-split.  Rather than listing pairs $(\g_\C, \theta)$, we shall list the associated real Lie algebra $\g$.

With notation as in Lemma \ref{Liedata1}, let $\Delta$ and $\Delta_K$ be the roots of $V$ in $\g_\C$ and $V_K$ in $\gk_\C$.  Let $W$ and $W_K$ be the Weyl groups they generate.  We choose systems of positive roots $\Delta^+$ and $\Delta_K^+$, with corresponding closed positive Weyl chambers $V^+ \subset V$ and $V_K^+ \subset V_K$.  We let $\rho$ and $\rho_K$ be the half sums of $\Delta^+$ and $\Delta^+_K$.  For each $\g$ we consider, we shall choose an identificaion $V_K \simeq \R^m$, with basis $x_1, \ldots, x_m$ and dual basis $\xi_1, \ldots, \xi_m$, and likewise for $V \simeq \R^n$ with basis $y_1, \ldots, y_n$ and dual basis $\eta_1, \ldots, \eta_n$.

\subsection{Split classical Lie algebras}

We begin with the cases where $\g$ is split and classical.  In this case, all three equivalences of Proposition \ref{Liesmall} follow if we know $(\g_\C, \theta)$ is small.  We must therefore prove that

\be
\label{small2}
\| t \|^* > 2\| t \|_K^* \text{ for all nonzero } t \in V_K.
\ee
If $V_K = V$, we have $W_K \subset W$.  In the remaining cases, when $\g$ is $\gsl(n)$ or $\mathfrak{so}(2k+1,2k+1)$, we also have $W_K \subset W$ in the sense that the stabiliser of $V_K$ in $W$ contains $W_K$.  We may therefore assume that $t \in V_K^+$ and $t \neq 0$.  We let $t' = Wt \cap V^+$, so that (\ref{small2}) is equivalent to proving that $\langle t', \rho \rangle > 2 \langle t, \rho_K \rangle$ for nonzero $t \in V_K^+$.  Our choices of $\Delta^+$ and $\Delta_K^+$ will be the standard ones in all classical cases.

\subsection{$\gsl(2k)$}

We have $V_K \simeq \R^{k}$ and $V \simeq \R^{2k-1}$.  For convenience, we shall identify $V$ with the space $\{ x \in \R^{2k} : \sum x_i = 0 \}$.  With this identification, we have

\begin{align*}
i(x_j) & = y_j - y_{2k + 1 - j} \\
\Delta_K & = \{ \pm \xi_i \pm \xi_j : 1 \le i \neq j \le k \} \\
\Delta & = \{ \eta_i - \eta_j : 1 \le i \neq j \le 2k \}.
\end{align*}
The assumption that $t \in V_K^+$ is equivalent to saying that the coordinates $t_i$ of $t$ are in decreasing order, and $t_{k-1} + t_k \ge 0$.  If $w \in W$ is the element that switches $x_k$ and $x_{k+1}$, then $w$ stabilises $V_K$ and acts on it by changing the sign of $t_k$.  We may therefore assume without loss of generality that $t_k \ge 0$.  This implies that $t' = (t_1, \ldots, t_k, -t_k, \ldots, -t_1)$, and that

\begin{align*}
\langle t, \rho_K \rangle & = (2k-2)t_1 + (2k-4)t_2 + \ldots + 2t_{k-1} \\
\langle t', \rho \rangle & = (4k-2)t_1 + (4k-6)t_2 + \ldots + 2t_k
\end{align*}
so that

\bes
\langle t', \rho \rangle - 2 \langle t, \rho_K \rangle = 2t_1 + 2t_2 + \ldots + 2t_k > 0.
\ees

\subsection{$\gsl(2k+1)$}

We have $V_K \simeq \R^{k}$ and $V \simeq \{ x \in \R^{2k+1} : \sum x_i = 0 \}$.  We have

\begin{align*}
i(x_j) & = y_j - y_{2k + 2 - j} \\
\Delta_K & = \{ \pm \xi_i \pm \xi_j : 1 \le i \neq j \le k \} \cup \{ \pm \xi_j : 1 \le j \le k \} \\
\Delta & = \{ \eta_i - \eta_j : 1 \le i \neq j \le 2k+1 \}.
\end{align*}
The condition $t \in V_K^+$ is equivalent to saying that $t_i$ are non-negative and decreasing, so that $t' = (t_1, \ldots, t_k, 0, -t_k, \ldots, -t_1)$.  This gives

\begin{align*}
\langle t, \rho_K \rangle & = (2k-1)t_1 + (2k-3)t_3 + \ldots + t_k \\
\langle t', \rho \rangle & = 4kt_1 + (4k-4)t_2 + \ldots 4t_k
\end{align*}
so that

\bes
\langle t', \rho \rangle - 2 \langle t, \rho_K \rangle = 2t_1 + 2t_2 + \ldots + 2t_k > 0.
\ees

\subsection{$\mathfrak{sp}(2k)$}

We have $V_K = V \simeq \R^k$, and

\begin{align*}
\Delta_K & = \{ \xi_i - \xi_j : 1 \le i \neq j \le k \}  \\
\Delta & = \{ \pm \xi_i \pm \xi_j : 1 \le i \neq j \le k \} \cup \{ \pm 2\xi_j : 1 \le j \le k \}.
\end{align*}
The condition $t \in V_K^+$ is equivalent to saying that $t_i$ are decreasing, while $t_1', \ldots, t_k'$ is the decreasing rearrangement of $|t_1|, \ldots, |t_k|$.  Let $j$ be the largest number with $t_j \ge 0$.  Because

\bes
\langle t, \rho_K \rangle = (n-1) t_1 + \ldots + (n - 2j + 1)t_j + (n - 2j - 1)t_{j+1} + \ldots + (1-n)t_k,
\ees
we see that $\langle t, \rho_K \rangle$ is obtained by summing $t_1', \ldots, t_k'$ with some permutation of the weights $n-1, \ldots, n+1 - 2j$ and $n-1, \ldots, n+1 - 2(n-j)$.  We obtain an upper bound for this sum by replacing each weight by its absolute value and arranging them in decreasing order.  If $l = \lfloor n/2 \rfloor$, this gives

\bes
\langle t, \rho_K \rangle \le (n-1)(t_1' + t_2') + (n-3)(t_3' + t_4') + \ldots + (n - 2l +1)(t'_{2l-1} + t'_{2l}).
\ees
We also have

\bes
\langle t', \rho \rangle = 2nt_1' + 2(n-1)t_2' + \ldots + 2t_n',
\ees
and combining these gives

\bes
\langle t', \rho \rangle - 2 \langle t, \rho_K \rangle = 2t_1' + 2t_3' + \ldots + 2t_{2m+1}' > 0
\ees
where $m = \lfloor (n-1)/2 \rfloor$.

\subsection{$\mathfrak{so}(2k,2k)$}

We have $V_K = V \simeq \R^{2k}$, and

\begin{align*}
\Delta_K & = \{ \pm \xi_i \pm \xi_j : 1\le i \neq j \le k \} \cup \{ \pm \xi_i \pm \xi_j : k+1 \le i \neq j \le 2k \} \\
\Delta & = \{ \pm \xi_i \pm \xi_j : 1 \le i \neq j \le 2k \}.
\end{align*}
The condition $t \in V_K^+$ is equivalent to saying that the two sequences $t_1, \ldots, t_k$ and $t_{k+1}, \ldots, t_{2k}$ are decreasing, and that $t_{k-1} + t_k \ge 0$ and $t_{2k-1} + t_{2k} \ge 0$.  We define $v \in V_K$ by saying that $v_1, \ldots, v_{2k}$ is the decreasing rearrangement of $t_1, \ldots, t_{2k}$.  We then have

\begin{align*}
\langle t, \rho_K \rangle & = (2k-2)(t_1 + t_{k+1}) + (2k-4)(t_2 + t_{k+2}) + \ldots + 2(t_{k-1} + t_{2k-1}) \\
& \le (2k-2)(v_1 + v_2) + (2k-4)(v_3 + v_4) + \ldots + 2(v_{2k-3} + v_{2k-2}),
\end{align*}
and $v \in Wt$ so that

\bes
\langle t', \rho \rangle \ge \langle v, \rho \rangle = (4k-2)v_1 + (4k-4)v_2 + \ldots + 2v_{2k-1}.
\ees
Combining these gives

\be
\label{so2k2k}
\langle t', \rho \rangle - 2 \langle t, \rho_K \rangle \ge 2v_1 + 2v_3 + \ldots + 2v_{2k-1}.
\ee
If $v_{2k-1} \ge 0$, then the RHS of (\ref{so2k2k}) is $\ge 0$ with equality iff $t_j = 0$ for all but one $j$.  If equality occurs, the conditions $t_{k-1} + t_k \ge 0$ and $t_{2k-1} + t_{2k} \ge 0$ then imply that $t = 0$, a contradiction.  We may therefore assume that $v_{2k-1} < 0$, which implies that $t_k < 0$ and $t_{2k} < 0$.  We define $v' \in V_K$ by changing the signs of the last two coordinates of $v$.  We then have $v' \in Wt$ and

\bes
\langle t', \rho \rangle \ge \langle v', \rho \rangle = (4k-2)v_1 + (4k-4)v_2 + \ldots + 4v_{2k-2} - 2v_{2k-1},
\ees
so that

\bes
\langle t', \rho \rangle - 2 \langle t, \rho_K \rangle \ge 2v_1 + 2v_3 + \ldots + 2v_{2k-3} - 2v_{2k-1} > 0.
\ees

\subsection{$\mathfrak{so}(2k+1,2k)$}

We have $V_K = V \simeq \R^{2k}$, and

\begin{align*}
\Delta_K & = \{ \pm \xi_i \pm \xi_j : 1\le i \neq j \le k \} \cup \{ \pm \xi_j : 1 \le j \le k \} \cup \{ \pm \xi_i \pm \xi_j : k+1 \le i \neq j \le 2k \} \\
\Delta & = \{ \pm \xi_i \pm \xi_j : 1 \le i \neq j \le 2k \} \cup \{ \pm \xi_j : 1 \le j \le 2k \}.
\end{align*}
The condition $t \in V_K^+$ is equivalent to saying that the two sequences $t_1, \ldots, t_k$ and $t_{k+1}, \ldots, t_{2k}$ are decreasing, and that $t_k \ge 0$ and $t_{2k-1} + t_{2k} \ge 0$.  As in the previous case, we define $v \in Wt$ to be the decreasing rearrangement of $t$.  We then have

\begin{align*}
\langle t, \rho_K \rangle & = (2k-1)t_1 + (2k-3)t_2 + \ldots + t_k + (2k-2)t_{k+1} + (2k-4)t_{k+2} + \ldots + 2t_{2k-1} \\
& \le (2k-1)v_1 + (2k-2)v_2 + \ldots + v_{2k-1}
\end{align*}
and

\bes
\langle t', \rho \rangle \ge \langle v, \rho \rangle = (4k-1)v_1 + (4k-3)v_2 + \ldots + v_{2k},
\ees
so that

\bes
\langle t', \rho \rangle - 2 \langle t, \rho_K \rangle \ge v_1 + v_2 + \ldots + v_{2k}.
\ees
If $t_{2k} \ge 0$, then we are done.  If $t_{2k} < 0$, we may define $v' \in Wt$ by changing the sign of the last coordinate of $v$.  This gives

\bes
\langle t', \rho \rangle \ge \langle v', \rho \rangle = (4k-1)v_1 + (4k-3)v_2 + \ldots 3v_{2k-1} - v_{2k},
\ees
and it follows that

\bes
\langle t', \rho \rangle - 2 \langle t, \rho_K \rangle \ge v_1 + v_2 + \ldots v_{2k-1} - v_{2k} > 0.
\ees

\subsection{$\mathfrak{so}(2k+1,2k+1)$}

We have $V_K \simeq \R^{2k}$, $V \simeq \R^{2k+1}$, and

\begin{align*}
i(x_j) & = y_j \\
\Delta_K & = \{ \pm \xi_i \pm \xi_j : 1\le i \neq j \le k \} \cup \{ \pm \xi_i \pm \xi_j : k+1 \le i \neq j \le 2k \} \cup \{ \pm \xi_j : 1 \le j \le 2k \}\\
\Delta & = \{ \pm \eta_i \pm \eta_j : 1 \le i \neq j \le 2k+1 \}.
\end{align*}
The condition $t \in V_K^+$ is equivalent to saying that the two sequences $t_1, \ldots, t_k$ and $t_{k+1}, \ldots, t_{2k}$ are decreasing and non-negative.  It follows that $t'$ is the decreasing rearrangement of $t_1, \ldots, t_{2k}, 0$, which gives

\begin{align*}
\langle t, \rho_K \rangle & = (2k-1)(t_1 + t_{k+1}) + (2k-3)(t_2 + t_{k+2}) + \ldots + (t_k + t_{2k}) \\
& \le (2k-1)(t_1' + t_2') + (2k-3)(t_3' + t_4') + \ldots + (t_{2k-1}' + t_{2k}')
\end{align*}
and
\bes
\langle t', \rho \rangle = 4kt_1' + (4k-2)t_2' + \ldots + 2t_{2k}',
\ees
so that

\bes
\langle t', \rho \rangle - 2 \langle t, \rho_K \rangle \ge 2t_1' + 2t_3' + \ldots + 2t_{2k-1}' > 0.
\ees

\subsection{Quasi-split classical Lie algebras}

We now deal with the cases where $\g$ is classical, and quasi-split but not split, namely $\gso(2k+2,2k)$, $\gso(2k+3,2k+1)$, $\gsu(k,k)$, and $\gsu(k+1,k)$.  In the first three cases we must show that $(\g_\C, \theta)$ is weakly small but not small, and in the last that $(\g_\C, \theta)$ is not small or weakly small.

\subsection{$\gso(2k+2,2k)$}

We have $V_K = V \simeq \R^{2k+1}$, and

\begin{align*}
\Delta_K & = \{ \pm \xi_i \pm \xi_j : 1 \le i \neq j \le k+1 \} \cup \{ \pm \xi_i \pm \xi_j : k+2 \le i \neq j \le 2k+1 \} \\
\Delta & = \{ \pm \xi_i \pm \xi_j : 1 \le i \neq j \le 2k+1 \}.
\end{align*}
Because $\dim V_K = \dim V$, we must produce $t \in V_K$ for which $\| t \|^* = 2 \| t \|_K^*$, and another for which $\| t \|^* > 2 \underset{w \in W}{\max} \| w t \|^*_K$.  For the first, choose $t = (1, 0, \ldots, 0) \in V_K^+$.  We then have $t' = t$, $\langle t, \rho_K \rangle = 2k$ and $\langle t', \rho \rangle = 4k$.  For the second, choose $t = (1,1, 0, \ldots, 0)$.  Then $t = t'$, and $\langle t', \rho \rangle = 8k-2$.  The coordinates of $\rho_K$ in the standard basis are $(2k, 2k-2, \ldots, 0, 2k-2, \ldots, 0)$, and so the maximum value of $2 \| w t \|_K^*$ is
\[
4k t_1 + 2(2k-2) t_2 + 2(2k-2) t_3 + \ldots = 8k-4
\]
as required.

\subsection{$\gso(2k+3,2k+1)$}

We have $V_K \simeq \R^{2k+1}$ and $V \simeq \R^{2k+2}$.  The embedding $\iota$ and roots are given by

\begin{align*}
\iota(x_j) & = y_j, \quad 1 \le j \le 2k+1 \\
\Delta_K & = \{ \pm \xi_i \pm \xi_j : 1 \le i \neq j \le k+1 \} \cup \{ \pm \xi_i \pm \xi_j : k+2 \le i \neq j \le 2k+1 \} \cup \{ \pm \xi_i : 1 \le i \le 2k+1 \} \\
\Delta & = \{ \pm \eta_i \pm \eta_j : 1 \le i \neq j \le 2k+2 \}.
\end{align*}
Because $\dim V_K < \dim V$, it suffices to find $t \in V_K$ with $\| t \|^* = 2 \| t \|_K^*$.  Choose $t = (1, 0, \ldots, 0) \in V_K^+$.  We then have $t' = t$, $\langle t, \rho_K \rangle = 2k+1$, and $\langle t', \rho \rangle = 4k+2$.

\subsection{$\gsu(k,k)$}

We have $V = V_K \simeq \R^{2k-1}$.  For convenience, we shall identify $V$ with the space $\{ x \in \R^{2k} : \sum x_i = 0 \}$.  With this identification, we have

\begin{align*}
\Delta_K & = \{ \xi_i - \xi_j : 1 \le i \neq j \le k \} \cup \{ \xi_i - \xi_j : k+1 \le i \neq j \le 2k \} \\
\Delta & = \{ \xi_i - \xi_j : 1 \le i \neq j \le 2k \}.
\end{align*}
Because $\dim V_K = \dim V$, we must produce $t \in V_K$ for which $\| t \|^* = 2 \| t \|_K^*$, and another for which $\| t \|^* > 2 \underset{w \in W}{\max} \| w t \|^*_K$.  For the first, we choose $t \in V_K^+$ to be the vector obtained by concatenating two copies of $(1, 0, \ldots, 0, -1) \in \R^k$, so that $\langle t, \rho_K \rangle = 4k-4$.  We have $t' = (1,1,0, \ldots, 0,-1,-1) \in V^+$, so that $\langle t', \rho \rangle = 8k - 8 = 2\langle t, \rho_K \rangle$.

For the second, we choose $t \in V_K^+$ to be the concatenation of $(2, 0, \ldots, 0, -2) \in \R^k$ and $(1, 0, \ldots, 0, -1) \in \R^k$.  We have $t' = (2,1,0, \ldots, 0 -1, -2)$, and $\langle t', \rho \rangle = 6k-6$.  The coordinates of $\rho_K$ in the standard basis are $(k-1, k-3, \ldots, 1-k, k-1, k-3, \ldots, 1-k)$, and so the maximum value of $2 \| w t \|_K^*$ is
\[
2(k-1) t_1 + 2(k-1) t_2 + \ldots - 2(k-1) t_{2k-1} - 2(k-1) t_{2k} = 6k-5
\]
as required.

\subsection{$\gsu(k+1,k)$}

We have $V = V_K \simeq \R^{2k}$, and we again identify $V$ with the space $\{ x \in \R^{2k+1} : \sum x_i = 0 \}$.  We have

\begin{align*}
\Delta_K & = \{ \xi_i - \xi_j : 1 \le i \neq j \le k+1 \} \cup \{ \xi_i - \xi_j : k+2 \le i \neq j \le 2k+1 \} \\
\Delta & = \{ \xi_i - \xi_j : 1 \le i \neq j \le 2k+1 \}.
\end{align*}
Because $\dim V_K = \dim V$, both smallness and weak smallness will fail if for any $t \in V$ we have $\| t \|^* = 2 \underset{w \in W}{\max} \| w t \|^*_K$.  Because both sides of this equation are invariant under $W$, we may assume that $t \in V^+$ so that $t_1, \ldots, t_{2k+1}$ are decreasing and we have
\[
\| t \|^* = \langle t, \rho \rangle = 2k t_1 + (2k-2) t_2 + \ldots - 2k t_{2k+1}.
\]
The coordinates of $\rho_K$ in the standard basis of $V^*$ are $(k, k-2, \ldots, -k, k-1, k-3, \ldots, 1-k)$, and so the maximum value of $2 \| w t \|_K^*$ is
\[
2k t_1 + (2k-2) t_2 + \ldots - 2k t_{2k+1} = \| t \|^*
\]
as required.

\subsection{Exceptional Lie algebras with $V = V_K$}

In this section, we describe how one may use Magma to deal with the exceptional Lie algebras with $V = V_K$.  This condition is satisfied by the quasi-split form of $\mathfrak{e}_6$, and the split exceptional real algebras other than $\mathfrak{e}_6$.  We treat the split form of $\mathfrak{e}_6$ by hand in Section \ref{e6}.

As $V_K = V$, we have $W_K \subset W$.  Because $\| \cdot \|^*$ is $W$-invariant, proving smallness is equivalent to showing that $\| t \|^* > 2 \underset{w \in W}{\max} \| wt \|^*_K$ for all nonzero $t \in V$.  We may assume that $t \in V^+$, so that this inequality is equivalent to $\langle t, \rho \rangle > 2 \langle wt, \rho_K \rangle$ for all $w \in W$.  By linearity, it suffices to test this when $t \in V^+$ is a coweight $\omega \in V^+$.  Likewise, if there is one coweight $t \in V^+$ such that $\langle t, \rho \rangle > 2 \langle wt, \rho_K \rangle$ for all $w \in W$ then $(\g_\C, \theta)$ is weakly small.

The classification of automorphisms of complex simple Lie algebras allows us to read the root system $\Delta_K \subset \Delta$ from the extended Dynkin diagram of $\g$, see for instance \cite[Thm. 5.15]{He}.  This description of $\Delta_K$ allows us to compute $\max \{ \langle w\omega, \rho_K \rangle : w \in W \}$ for each coweight $\omega$ using an algebra package such as Magma.  We give Magma code that performs this calculation in the case of $\fre_8$, and indicate the modifications needed in the other cases.  We begin by constructing $W$.

\begin{verbatim}
W:=CoxeterGroup("E8");
\end{verbatim}
\cite[Thm. 5.15]{He} describes a set of simple roots in $\Delta$ whose reflections generate $W_K$, and we use this to compute $W_K$ and $\Delta^+_K$.

\begin{verbatim}
WH:=ReflectionSubgroup(W, {2,3,4,5,6,7,8,120});
DHplus:=[RootPosition(W, Root(WH,n)) : n in [1..(#Roots(WH)/2)]];
\end{verbatim}
We next calculate $\langle \omega_i, \rho \rangle$ for $1 \le i \le 8$, where $\omega_i$ are the coweights of $\Delta$.

\begin{verbatim}
for i in [1..8] do

s:=0;
for n in [1 .. (#Roots(W)/2)] do
    s := s+Root(W,n)[i];
end for;
s;

end for;
\end{verbatim}
We next compute $\max \{ \langle w\omega_i, \rho_K \rangle : w \in W \}$.  We reduce the time needed to do this by letting $W_i$ be the stabiliser of $\omega_i$ in $W$ and computing $\max \{ \langle w\omega_i, \rho_K \rangle : w \in W / W_i \}$.  We find a set of coset representatives for $W / W_i$.

\begin{verbatim}
for i in [1..8] do

Stab:=ReflectionSubgroup(W, Exclude({1,2,3,4,5,6,7,8},i));
Tran:=Transversal(W,Stab);
\end{verbatim}
Finally, we use our set of representatives to compute $\max \{ \langle w\omega_i, \rho_K \rangle : w \in W / W_i \}$.

\begin{verbatim}
Worbit:=[];
for w in Tran do
t:=0;
for n in DHplus do
    t := t+Root(W,n^(w^(-1)))[i];
end for;
Append(~Worbit,t);
end for;
Sort(Worbit)[#Tran];

end for;
\end{verbatim}
This produces two lists of 8 numbers, and one checks that all the numbers in the first list are more than double the corresponding number in the second list.  In the cases of $\g_2$, $\mathfrak{f}_4$, $\mathfrak{e}_6$, and $\mathfrak{e}_7$, one replaces the second line with

\begin{verbatim}
WH:=ReflectionSubgroup(W, {1,6});
WH:=ReflectionSubgroup(W, {2,3,4,24});
WH:=ReflectionSubgroup(W, {1,3,4,5,6,36});
WH:=ReflectionSubgroup(W, {1,3,4,5,6,7,63});
\end{verbatim}
and modifies the others in the obvious way.  In every case except $\mathfrak{e}_6$, all the numbers in the first list are again more than double the corresponding number in the second list, while for $\mathfrak{e}_6$ there is at least one pair for which the inequality holds.


\subsection{$\fre_6$}
\label{e6}

In the case when $\g$ is the split form of $\fre_6$, we have $\gk_\C = \mathfrak{sp}(4)_\C$.  We choose the basis $\xi_1, \ldots, \xi_4$ for $V_K^*$ in such a way that

\bes
\Delta_K = \{ \pm \xi_i \pm \xi_j : 1 \le i \neq j \le 4 \} \cup \{ \pm 2\xi_i : 1 \le i \le 4 \}.
\ees
We let $\p$ be the $\gk$-invariant complement to $\gk$ in $\g$.  Theorem 5.15 of \cite{He} implies that the representation of $\gk$ on $\p$ has highest weight $\xi_1 + \xi_2 + \xi_3 + \xi_4$.  It follows that we have $\p_\C \simeq \bigwedge^4 \text{std} / ( \omega \wedge \bigwedge^2 \text{std} )$, where $\text{std}$ denotes the standard representation of $\gk_\C$ on $\C^8$ and $\omega$ is the invariant symplectic form on $\C^8$ fixed by $\gk_\C$.  If we define $\Sigma$ to be the multiset $\{ \alpha|_{V_K} : \alpha \in \Delta \}$, it follows that

\bes
\Sigma = \Delta_K \cup \{ \pm \xi_1 \pm \xi_2 \pm \xi_3 \pm \xi_4 \} \cup \{ \pm \xi_i \pm \xi_j : 1 \le i \le 4 \}
\ees
as multisets.  In particular, any $\alpha \in \Delta$ has a nonzero restriction to $V_K$.  Let $\Sigma^+ = \{ \alpha \in \Sigma : \alpha( (5,3,2,1) ) > 0 \}$, so that $\Sigma = \Sigma^+ \cup -\Sigma^+$.  There is a set of positive roots $\Delta^+ \subset \Delta$ such that $\Sigma^+ = \{ \alpha|_{V_K} : \alpha \in \Delta^+ \}$, and so if $\rho$ and $\rho_\Sigma$ are the the half sums of $\Delta^+$ and $\Sigma^+$ then $\rho|_{V_K} = \rho_\Sigma$.

\begin{lemma}

We have $\max\{ \langle w t, \rho \rangle : w \in W \} \ge \max \{ \langle w_K t, \rho_\Sigma \rangle : w_K \in W_K \}$.

\end{lemma}

\begin{proof}

Let $w_K \in W_K$.  We have $w_K \Sigma^+ = \{ \alpha \in \Sigma : \langle \alpha, w_K (5,3,2,1) \rangle > 0 \}$, so as before there is a set of positive roots $\Phi^+ \subset \Delta$ with $w_K \Sigma^+ = \{ \alpha|_{V_K} : \alpha \in \Phi^+\}$.  If $\rho_\Phi$ is the half sum of $\Phi^+$, then $\rho_\Phi|_{V_K} = w_K \rho_\Sigma$, and so $\langle t, w_K \rho_\Sigma \rangle = \langle t, \rho_\Phi \rangle \le \max\{ \langle wt, \rho \rangle : w \in W \}$ as required.

\end{proof}

It therefore suffices to prove that

\bes
\max\{ \langle w_K t, \rho_\Sigma \rangle : w_K \in W_K \} > 2 \max \{ \langle w_K t, \rho_K \rangle : w_K \in W_K \}.
\ees
We may assume that $t \in V_K^+$, in which case we must show that $\max\{ \langle w_K t, \rho_\Sigma \rangle : w_K \in W_K \} > 2\langle t, \rho_K \rangle$.  We have

\begin{align*}
\langle t, \rho_\Sigma \rangle & = 20t_1 + 12t_2 + 8t_3 + 4x_4 \\
\langle t, \rho_K \rangle & = 8t_1 + 6t_2 + 4t_3 + 2t_4 \\
\langle t, \rho_\Sigma \rangle - 2\langle t, \rho_K \rangle & = 4t_1 > 0,
\end{align*}
which completes the proof in this case.

\end{document}